\newcommand{\poubelle}[1]{}
  \newcolumntype{x}[1]{>{\centering\hspace{0pt}}p{#1}}
\def\R{\mathbb{R}}
\def\K{\mathbb{K}}
\def\{{\left\lbrace}
\def\}{\right\rbrace}
\newtheorem{theorem}{Theorem}[section]
\newtheorem{corollary}[theorem]{Corollary}
\newtheorem{define}[theorem]{Definition}
\newtheorem{lemma}[theorem]{Lemma}
\newtheorem{prop}[theorem]{Proposition}
\theoremstyle{definition}
\newtheorem{remark}[theorem]{Remark}
\renewcommand{\v}{\mathrm{v}}
\newcommand{\w}{\mathrm{w}}
\newcommand{\red}{\mathrm{red}}
\newcommand{\PL}{\textnormal{P,\textbf{L}}}
\newcommand{\ext}{\mathrm{ext}}
\newcommand{\T}{\mathbb{T}}
\renewcommand{\P}{\mathbb{P}}
\newcommand{\Z}{\mathbb{Z}}
\renewcommand{\K}{\mathbb{K}}
\newcommand{\DF}{\mathcal{F}}
\newcommand{\test}{\mathtt{test}}
\definecolor{codegreen}{rgb}{0,0.6,0}
\definecolor{codegray}{rgb}{0.5,0.5,0.5}
\definecolor{codepurple}{rgb}{0.58,0,0.82}
\definecolor{backcolour}{rgb}{0.95,0.95,0.92}
\lstdefinestyle{mystyle}{
  backgroundcolor=\color{backcolour}, commentstyle=\color{codegreen},
  keywordstyle=\color{magenta},
  numberstyle=\tiny\color{codegray},
  stringstyle=\color{codepurple},
  basicstyle=\ttfamily\footnotesize,
  breakatwhitespace=false,         
  breaklines=true,                 
  captionpos=b,                    
  keepspaces=true,                 
  numbers=left,                    
  numbersep=5pt,                  
  showspaces=false,                
  showstringspaces=false,
  showtabs=false,                  
  tabsize=2
}
\title[Effective weighted stability]{An effective weighted K-stability condition for polytopes and semisimple principal toric fibrations}
\author[T. Delcroix]{Thibaut Delcroix} 
\address{Thibaut Delcroix, Univ Montpellier, CNRS, Montpellier, France}
\email{thibaut.delcroix@umontpellier.fr}
\author[S. Jubert]{Simon Jubert} 
\address{Simon Jubert \\ Departement de Mathématiques,  UQAM, C.P. 8888, Succursale Centre-ville, Montréal
(Quebec), H3C 3P8, Canada}
\address{  Institut de Math{\'e}matiques de Toulouse\\   Université Paul Sabatier \\ 118 route de Narbonne\\  31062 Toulouse \\  France \\ } 
\email{simonjubert@gmail.com}
\keywords{semisimple principal toric fibration, extremal Kähler metric, weighted cscK metric, uniform K-stability, projective bundle}
\subjclass{14M25, 32Q15, 32Q26, 53C55}
\begin{document}

\maketitle

\begin{abstract}
The second author has shown that existence of extremal Kähler metrics on semisimple principal toric fibrations is equivalent to a notion of weighted uniform K-stability, read off from the moment polytope. 
The purpose of this article is to prove various sufficient conditions of weighted uniform K-stability which can be checked effectively and explore the low dimensional new examples of extremal Kähler metrics it provides. 
\end{abstract}

\section{Introduction}

Calabi's work has been extremely influential in Kähler geometry, his name being still associated to some of the most fundamental objects of interest. 
The present article is motivated by two of these, Calabi's extremal Kähler metrics and Calabi's ansatz. 

Extremal Kähler metrics provide a natural notion of canonical Kähler metrics in a given Kähler class on a compact Kähler manifold \(X\): they are the metrics that achieve the minimum of the \(L^2\)-norm of the scalar curvature. 
Kähler metrics with constant scalar curvature (cscK metrics for short) are special cases of such metrics, but Calabi showed in \cite{Cal82} that there may exist extremal Kähler metrics when there exists no cscK metrics at all, by exhibiting extremal Kähler metrics on Hirzebruch surfaces. 
In order to show this, Calabi relied on the simple yet powerful idea that one should search for extremal Kähler metrics among those Kähler metrics that behave well with respect to the geometry of the manifold. 

This was not a new idea of course. Matsushima showed for example \cite{Mat57} that cscK metrics must behave well with respect to biholomorphism. 
More precisely, the automorphism group of \(X\) must be the complexification of the isometry group of the cscK metric, if it exists. 
This is preventing Hirzebruch surfaces from admitting cscK metrics as their automorphism group is non-reductive. 
 
Calabi went further and restricted to metrics that respect the structure of \(\mathbb{P}^1\)-bundles of Hirzebruch surfaces. 
He was then able to translate, for such metrics, the extremal property into a simple ODE and to solve it, showing the existence of extremal Kähler metrics. 
His construction was later referred to as Calabi's ansatz, used in various situations and generalized in various directions. 
It would be easy to fill pages with a bibliographical review of these, but it is not the purpose of this introduction. 
We only stress that a common theme is usually the desire to get explicit existence results or criterions. 
An influential illustration is \cite{ACGTIII}, where a variant of Calabi's ansatz was used to show that on various \(\mathbb{P}^1\)-bundles, existence of extremal Kähler metrics reduces to checking the positivity of a polynomial on \([-1,1]\), the so-called extremal polynomial. 
In the series of papers leading to \cite{ACGTIII} (see also \cite{HS}), the general idea of Calabi's ansatz was actually pushed way further, allowing for example to consider certain fibrations with toric fiber. 

The interest for such fibrations was significantly renewed last year, when the second author proved in \cite{SJ}, using the breakthrough results of Chen and Cheng \cite{CCIII}, that a uniform version of the Yau--Tian--Donaldson conjecture holds for semisimple principal toric fibrations, a very large class of toric fibrations. 
While it allows to translate the question of existence of extremal Kähler metrics on such manifolds into a question of convex geometry on their moment polytopes, it is not yet an explicitly checkable criterion, as the conditions to check still form an infinite dimensional space. 
Motivated by the more practical philosophy behind Calabi's ansatz, we prove in the present paper various sufficient conditions of existence of extremal Kähler metrics which may be easily checked. 
Our approach is based on an initial idea by Zhou and Zhu \cite{ZZ}, exploited in greater generality by the first author in \cite{KSSV2}. 

The previous paragraphs are meant as an introduction to our results, and it should be stressed that it presents as such a biased and very incomplete historical account of the study of extremal Kähler metrics on manifolds with large symmetry. We refer to Székelyhidi's book \cite{SzeBook}, Gauduchon's lecture notes \cite{Gau} for a general introduction to extremal Kähler metrics, and to Donaldson's remarkable survey \cite{Don08} and Apostolov's lecture notes \cite{ApoLN} for some of the more specific aspects of manifolds with large symmetry. 
More recent developments very related to our work will be discussed at the beginning of Section~\ref{section-geometric}.  

Let us now highlight in the remainder of this introduction our main results. 
For this, a few notations are needed, and the full details will be given in Section~\ref{section-geometric}. 
Semisimple principal toric fibrations are certain holomorphic fiber bundles \(\pi: Y\to B\) where the basis \(B = \prod_a B_a\) is a product of Hodge manifolds \((B_a,\omega_a)\) with constant scalar curvature \(s_a\), and where the fiber \(X\) is toric under a compact torus \(\mathbb{T}\). 
They are constructed from certain types of principal \(\mathbb{T}\)-bundles, essentially determined by the data of a tuple \((p_a)\) of one-parameter subgroups of \(\mathbb{T}\). In this paper, a one-parameter subgroup \(p_a: \mathbb{S}^1\to \mathbb{T}\) of \(\mathbb{T}\) will be identified with the element of the Lie algebra of \(\mathbb{T}\) determined by the image of \(1\in \mathbb{R}=\text{Lie}(\mathbb{S}^1)\) under the differential of \(p_a\) at the neutral element. 
In particular, it defines a linear function on the dual of the Lie algebra of \(\mathbb{T}\). 
On such manifolds, a Kähler class is called compatible if it decomposes as the sum of a relative Kähler class induced by a Kähler class \([\omega_X]\) on \(X\), and a sum of real multiples \(c_a\pi^*[\omega_a]\) of the pull-backs of the Kähler classes \([\omega_a]\). 
An admissible Kähler class contains admissible Kähler metrics, that behave well with respect to the fibration structure. 

\begin{theorem}
\label{thm-intro}
Assume that \(Y\) is a semisimple principal toric fibration, that the toric fiber \(X\) is Fano equipped with the Kähler class \([\omega_X]=t 2\pi c_1(X)\), and let \([\omega_Y]\) be an admissible Kähler class. 
Assume that for all \(a\), \(2\dim(B_a)c_a\geq ts_a\) and that at every vertex \(x\) of the moment polytope \(P\) of \((X,[\omega_X])\), 
\[ 2(\dim(Y)+1)+\sum_a \frac{ts_a-2\dim(B_a)c_a}{p_a(x)+c_a} -t l_{\ext}(x) \geq 0 \]
where \(l_{\ext}\) is the extremal affine function. 
Then there exists an extremal Kähler metric in \([\omega_Y]\).  
\end{theorem}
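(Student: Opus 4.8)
The plan is to read the problem off the second author's equivalence in \cite{SJ} and then to certify the resulting weighted uniform K-stability by an effective vertex estimate in the spirit of Zhou--Zhu \cite{ZZ} and the first author \cite{KSSV2}.

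First I would translate the statement into convex geometry on $P$. By \cite{SJ}, an extremal Kähler metric exists in the admissible class $[\omega_Y]$ if and only if $P$ is weighted uniformly K-stable for the data attached to the fibration: the volume weight $v(x)=\prod_a\big(p_a(x)+c_a\big)^{\dim B_a}$, and a scalar-curvature weight $w$ built from $v$, the base scalar curvatures $s_a$ and the extremal affine function $l_{\ext}$, the latter being fixed by the requirement that the weighted Donaldson--Futaki functional $\mathcal{F}(f)=2\int_{\partial P}f\,v\,d\sigma-\int_P f\,w\,dx$ vanishes on affine $f$; here $d\sigma$ is Donaldson's affine boundary measure. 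Weighted uniform K-stability then amounts to a uniform lower bound $\mathcal{F}(f)>0$ over non-affine convex $f$, normalised by subtracting an affine function so that $f\geq 0$ and $f$ vanishes at the origin (the interior centre of the reflexive polytope). By continuity and density it is enough to treat piecewise-linear such $f$, so it suffices to establish this positivity.

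Next I would produce an effective lower bound for $\mathcal{F}(f)$. The Fano hypothesis $[\omega_X]=t\,2\pi c_1(X)$ fixes the facet-defining functions, giving $P=t\Delta$ for the reflexive polytope $\Delta$ of $-K_X$ and making the radial structure of $P$ available. Choosing a vector field $W$ adapted to this structure, of the form $W=v\,x$, with $\operatorname{div}W$ comparing to $w$, the divergence theorem recasts $\mathcal{F}(f)=\int_P\langle\nabla f,W\rangle\,dx+\int_{\partial P}f\,v\,H\,d\sigma$, where the gradient term is $\int_P v\,\langle\nabla f,x\rangle\,dx\geq\int_P v\,f\,dx\geq 0$ by convexity (since $f(0)=0$) and $H$ is an explicit boundary density. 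The Euclidean divergence of $x$ together with $\nabla\log v=\sum_a\dim(B_a)\,p_a/(p_a+c_a)$ produces the dimension count and the rational corrections, and the key computation is that
\[ H(x)=2(\dim Y+1)+\sum_a\frac{ts_a-2\dim(B_a)c_a}{p_a(x)+c_a}-t\,l_{\ext}(x), \]
where the constant $2(\dim Y+1)$ assembles the fibre's Fano normalisation (contributing $\dim X+1$) with the base directions encoded in $v$ (contributing $\sum_a\dim B_a$), the middle sum comes from $\nabla\log v$ together with the base curvatures carried by $w$, and the last term is the extremal correction.

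Finally I would exploit concavity. On an admissible class each factor satisfies $p_a(x)+c_a>0$ on $P$, so the hypothesis $2\dim(B_a)c_a\geq ts_a$ forces $ts_a-2\dim(B_a)c_a\leq 0$, and since $1/(p_a(x)+c_a)$ is convex each summand $\frac{ts_a-2\dim(B_a)c_a}{p_a(x)+c_a}$ is concave; as $2(\dim Y+1)$ is constant and $t\,l_{\ext}$ affine, the density $H$ is concave on $P$. Hence the vertex inequality $H(x)\geq 0$ at every vertex of $P$ propagates to $H\geq 0$ on all of $P$, so the boundary integral is non-negative and $\mathcal{F}(f)\geq\int_P v\,f\,dx>0$ for non-affine normalised $f$, with the uniform bound furnished by the gradient term. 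Weighted uniform K-stability follows, and \cite{SJ} provides the extremal metric. I expect the crux to be exactly the divergence identity: carrying out the integration by parts with the non-polynomial weight $v=\prod_a(p_a(x)+c_a)^{\dim B_a}$, confirming that $W$ reproduces the weighted scalar curvature $w$, and tracking every normalisation so that the boundary density is precisely $v\,H$ with the coefficient $2(\dim Y+1)$ and the correct sign on $t\,l_{\ext}$ — the sign conditions $2\dim(B_a)c_a\geq ts_a$ being what turns the finite vertex check into global concave positivity.
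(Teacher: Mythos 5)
Your overall route is the same as the paper's: reduce to the polytope via \cite{SJ} (Theorem~\ref{theorem-J}), integrate the Donaldson--Futaki functional by parts against the radial field $\v(x)\,x$ from the monotone point $x_0=0$ of the dilated reflexive polytope, identify the density $H(x)=2(\dim Y+1)+\sum_a\frac{ts_a-2n_ac_a}{p_a(x)+c_a}-t\,l_{\ext}(x)$, and use that each summand is concave when $2n_ac_a\geq ts_a$ to reduce positivity of $H$ to the vertices; all of that matches Theorem~\ref{prop-combinatorial} and Corollaries~\ref{corollary-fano-fiber}, \ref{cor-vertex}. However, there is a genuine gap at the decisive step. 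What Theorem~\ref{theorem-J} requires is \emph{uniform} K-stability, i.e.\ $\DF(f)\geq\lambda\lVert f^*\rVert_{L^1}$ with $\lambda>0$ independent of $f$; your argument delivers only $\DF(f)\geq 0$. Your claim that ``the uniform bound is furnished by the gradient term'' fails: in the correct decomposition the gradient term is $\int_P \v\left(\langle\nabla f,x\rangle-f\right)dx$ (the subtraction of $f$ is forced -- it is precisely where the $+1$ in $\ell+1$ comes from), and by Euler's identity this term vanishes identically on every convex function that is positively homogeneous of degree one about $x_0$. Such ``cone'' functions are non-affine, and for them $\DF(f)=\int_P f\,\v\,H\,dx$; since your hypotheses only give $H\geq 0$ (with possible equality at vertices and hence on part of $P$), no inequality of the form $\DF(f)\geq\lambda\lVert f^*\rVert_{L^1}$, nor even your claimed $\DF(f)\geq\int_P \v f\,dx$, follows. (Relatedly, your divergence identity is miswritten: $H$ appears in an \emph{interior} integral $\int_P f\,\v\,H\,dx$, not a boundary one, and $\operatorname{div}(\v x)\neq \w$.)

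The paper closes exactly this gap with a compactness/contradiction argument in the style of Zhou--Zhu \cite{ZZ} (proof of Theorem~\ref{prop-combinatorial}): if uniformity fails, one takes normalized convex functions $f_k^*$ with $\int_{\partial P}\v f_k^*\,d\sigma=1$ and $\DF(f_k^*)\to 0$; Donaldson's compactness result \cite[Corollary~5.2.5]{SKD} produces a locally uniform limit $f_\infty^*$; non-negativity of \emph{both} terms in the decomposition forces $\langle\nabla f_k^*,x-x_0\rangle-f_k^*\to 0$ almost everywhere, so $f_\infty^*$ is affine, hence zero by the normalization; then $\int_P f_k^*\w\,dx\to 0$ contradicts the boundary normalization, and uniformity follows with $\DF(f)\geq\mu\int_{\partial P}\v f^*\,d\sigma\geq \mu C'\lVert f^*\rVert_{L^1}$. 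Without this step (or some substitute, such as strict positivity $H\geq\epsilon>0$, which your vertex hypothesis does not provide), the argument establishes at best K-semistability plus positivity on non-cone functions, which is not what \cite{SJ} needs; note also that your early reduction ``it suffices to establish this positivity'' for piecewise-linear $f$ already conflates pointwise positivity with the required uniform lower bound.
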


Recall that, when a maximal torus of automorphisms of \(Y\) is fixed, the scalar curvature of an invariant extremal Kähler metric, if it exists, is a holomorphy potential of a well defined vector field called the extremal vector field. 
In the statement above, the extremal function is encoding the extremal vector field, and a choice of maximal torus of automorphisms of \(Y\) is implicitly assumed. We will explain why it reduces to an affine function on the polytope \(P\) in Section~\ref{section-geometric}. 

We actually prove a much more general sufficient condition, Theorem~\ref{prop-combinatorial}, that does not require the fiber to be Fano. 
Since we obtain already a wealth of new examples with this particular case, and it is a natural generalization of the \(\mathbb{P}^1\)-bundle case, we focus on this result for the introduction. 

In the case when the fibration is Fano itself, and not only its fiber, then \(t=1\) and \(s_a=2\dim(B_a)c_a\) so one gets a particularly simple criterion:
\begin{corollary}
\label{cor-intro}
A Fano semisimple principal toric fibration \(Y\) admits an extremal Kähler metric in \(c_1(Y)\) if its extremal affine function \(l_{\ext}\) satisfies 
\[ \sup l_{\ext} \leq 2(\dim(Y)+1) \]
and the latter obviously needs only be verified at vertices of the moment polytope. 
\end{corollary}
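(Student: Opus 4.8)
The plan is to derive this corollary as a direct specialization of Theorem~\ref{thm-intro} to the case where the total space \(Y\), and not merely its fiber, is Fano. The only genuine geometric input is the pair of identifications \(t=1\) and \(s_a = 2\dim(B_a)c_a\), which I would establish first. When \([\omega_Y] = c_1(Y)\), the decomposition of the anticanonical class of \(Y\) along the fibration forces the relative part to coincide with the anticanonical class of the fiber, giving the normalization \(t=1\) for \([\omega_X] = t\,2\pi c_1(X)\); simultaneously, the Fano condition constrains each base factor \((B_a,\omega_a)\) to carry, in its induced class, a Kähler--Einstein metric, so that \([\omega_a]\) is proportional to \(c_1(B_a)\) with the Einstein normalization. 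The Einstein identity relating scalar curvature, complex dimension, and Einstein constant then reads exactly \(s_a = 2\dim(B_a)c_a\). These are precisely the relations announced before the statement, and I would justify them by unwinding the explicit construction recalled in Section~\ref{section-geometric}.

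Granting this, the substitution is immediate. The first hypothesis of Theorem~\ref{thm-intro}, namely \(2\dim(B_a)c_a \geq t s_a\) for all \(a\), becomes the equality \(2\dim(B_a)c_a = 2\dim(B_a)c_a\) and is therefore automatically satisfied. In the second hypothesis, every numerator \(t s_a - 2\dim(B_a)c_a\) vanishes, so the entire sum \(\sum_a \frac{t s_a - 2\dim(B_a)c_a}{p_a(x)+c_a}\) is identically zero at each vertex \(x\) of \(P\). With \(t=1\), the inequality at \(x\) collapses to
\[ 2(\dim(Y)+1) - l_{\ext}(x) \geq 0, \]
that is, \(l_{\ext}(x) \leq 2(\dim(Y)+1)\).

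It then remains to pass from the vertices to the supremum over \(P\). Since \(l_{\ext}\) is an affine function and \(P\) is a compact convex polytope, the maximum of \(l_{\ext}\) on \(P\) is attained at a vertex. Consequently the condition \(l_{\ext}(x) \leq 2(\dim(Y)+1)\) at all vertices \(x\) is equivalent to \(\sup_P l_{\ext} \leq 2(\dim(Y)+1)\), which is the hypothesis of the corollary; this also justifies the closing remark that the bound need only be checked at vertices. Invoking Theorem~\ref{thm-intro} then produces an extremal Kähler metric in \(c_1(Y)\), as desired.

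I expect the main obstacle to lie entirely in the first step — the precise verification that \(t=1\) and \(s_a = 2\dim(B_a)c_a\) in the Fano case — since it requires the adjunction relating \(c_1(Y)\), the relative anticanonical class, and the base data, together with the fact that the Fano hypothesis on the total space pins down the base factors to be Kähler--Einstein in their induced classes. Everything after that is an algebraic simplification combined with the elementary convexity fact that an affine function on a polytope attains its maximum at a vertex.
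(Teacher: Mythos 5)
Your proposal is correct and follows essentially the same route as the paper: the paper also obtains Corollary~\ref{cor-intro} by specializing its general Fano-fiber criterion (Corollary~\ref{corollary-fano-fiber}, equivalently Theorem~\ref{thm-intro}) with \(t=1\) and \(s_a=2n_ac_a\), after which the sum of base terms vanishes and affinity of \(l_{\ext}\) reduces the check to vertices. The only difference is that the paper does not re-derive the identities \(t=1\), \(s_a=2\dim(B_a)c_a\) by adjunction as you propose; it quotes them directly from Lemma~\ref{Lemma-AJL} (that is, \cite[Lemma~5.11]{AJL}), so the step you flag as the main obstacle is handled by citation rather than by unwinding the construction.
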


We provide, for the reader's convenience, an elementary Python program implementing the sufficient condition from Theorem~\ref{thm-intro} in the case when there is only one factor in \(B\) and the fiber is of dimension one or two. 
It would be easy to imitate these to allow greater flexibility in the data. 
It may be used either with all the data given numerically, or some of the data treated as variable. 
We use this to our advantage to prove the existence of extremal Kähler metrics in a wide range of Kähler classes for some examples of fibrations. 

\begin{prop}
\label{prop-intro}
Let $Y=\P_B(\mathcal{O}_B \oplus H^{-p_1} \oplus H^{-p_2})$, where \(B\) is a Kähler-Einstein Fano threefold, \(H\) is the smallest integral divisor of \(2\pi c_1(B)\) and $1 \leq p_1 \leq p_2 $. Then there exists an extremal Kähler metric in the Kähler class \( c_1(X)+\lambda  c_1(B)\) for $\lambda \geq 7 p_2$, where \(c_1(X)\) and \(c_1(B)\) respectively denote the relative first Chern class and the pull-back of the first Chern class, by an abuse of notations. 
\end{prop}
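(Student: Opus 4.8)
The plan is to exhibit $Y$ as a semisimple principal toric fibration and apply Theorem~\ref{thm-intro}. First I would translate the bundle data into the combinatorial data the theorem requires. The fibre is $X=\P^2$, which is toric and Fano, and the relative class $c_1(X)$ corresponds to taking $t=1$ in the anticanonical polarization, so the moment polytope $P$ is the reflexive triangle with vertices $(-1,-1)$, $(-1,2)$ and $(2,-1)$. The base is a single Kähler--Einstein Fano threefold, so there is one factor $B$ with $\dim_{\mathbb C}B=3$ and constant scalar curvature $s_a$ fixed by the Kähler--Einstein normalization. The summands $(\mathcal O_B,H^{-p_1},H^{-p_2})$ determine the one-parameter subgroup whose associated linear function on $P$ is $p_a(x)=p_1x_1+p_2x_2$, while the coefficient $\lambda$ of $c_1(B)$ fixes the constant $c_a$ as an increasing affine function of $\lambda$. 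Since $\dim_{\mathbb C}Y=5$, the quantity $2(\dim(Y)+1)$ equals $12$.

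With this dictionary in hand, the statement reduces to verifying the two hypotheses of Theorem~\ref{thm-intro}. The condition $2\dim(B)c_a\geq ts_a$, together with the positivity $p_a(x)+c_a>0$ on $P$ needed for admissibility, amounts to lower bounds on $\lambda$; since $p_a$ attains its minimum $-p_1-p_2$ at the vertex $(-1,-1)$ and $\lambda\geq 7p_2\geq p_1+p_2$ by $p_1\leq p_2$, both hold comfortably. The substantive point is the vertex inequality
\[ 12+\frac{ts_a-2\dim(B)c_a}{p_a(x)+c_a}-tl_{\ext}(x)\geq 0. \]
As $ts_a-2\dim(B)c_a\leq 0$, the fraction is negative and most negative where $p_a(x)+c_a$ is smallest; I therefore expect the binding vertex to be the one minimizing $p_a(x)+c_a$, namely $(-1,-1)$, and would treat the other two vertices as easier cases.

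The heart of the argument is the computation of the extremal affine function $l_{\ext}$. Following Section~\ref{section-geometric}, $l_{\ext}$ is the affine function obtained by projecting the curvature datum onto affine functions for the weighted inner product with weight $(p_a(x)+c_a)^{\dim(B)}$ on $P$. Concretely I would evaluate the weighted moments $\int_P x^\alpha\,(p_a(x)+c_a)^3\,dx$ and the matching boundary integrals over the triangle, all of which are explicit polynomials in $c_a,p_1,p_2$, and then solve the resulting $3\times 3$ linear system in the constant and the two linear coordinates to obtain $l_{\ext}$ with coefficients rational in $c_a,p_1,p_2$. Substituting the value of $l_{\ext}$ at each vertex and clearing the positive denominator $p_a(x)+c_a$ converts the vertex inequality into a single polynomial inequality in $\lambda,p_1,p_2$.

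The final and most delicate step is to certify this polynomial inequality for all admissible parameters with $\lambda\geq 7p_2$. The asymptotics are encouraging: as $\lambda\to\infty$ the weight becomes nearly constant, $l_{\ext}$ tends to the constant extremal value of the Kähler--Einstein fibre $\P^2$, the fraction tends to $-2\dim(B)=-6$, and the inequality becomes strict, so only the finite-$\lambda$ corrections require control. To handle these uniformly I would use $1\leq p_1\leq p_2$ to reduce the two-parameter estimate to its extreme cases on the boundary of the parameter region, yielding a one-variable inequality in $\lambda/p_2$ that holds exactly when $\lambda\geq 7p_2$; the constant $7$ is the resulting explicit threshold. This is precisely the verification performed by the Python program accompanying the paper, which can be run symbolically in $\lambda$ or across the parameter ranges to certify the bound.
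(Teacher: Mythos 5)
Your setup coincides with the paper's: realize $Y$ as a simple principal toric fibration with fiber $\P^2$ and reflexive triangle $P$, take $t=1$, $p_a(x)=p_1x_1+p_2x_2$, let $c_a$ be determined by $\lambda$, check the hypotheses $2\dim(B)c_a\geq ts_a$ and $p_a+c_a>0$, and verify the vertex inequality of Theorem~\ref{thm-intro} (i.e.\ \eqref{condition-general}, via Corollary~\ref{cor-vertex}), with $l_{\ext}$ obtained by solving the weighted $3\times 3$ linear system; this is exactly what the paper's Program~\ref{P2-program} does, producing at each vertex a rational fraction $P(c,p_1,p_2)/Q(c,p_1,p_2)$ recorded in Appendix~\ref{appendixB}.

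The gap is in your last step, which is the actual content of the proposition. Your proposed reduction of the two-parameter inequality ``to its extreme cases on the boundary of the parameter region, yielding a one-variable inequality in $\lambda/p_2$'' is unjustified and in fact cannot work: the problem is not invariant under the rescaling $(c,p_1,p_2)\mapsto(\mu c,\mu p_1,\mu p_2)$, because the K\"ahler--Einstein normalization fixes $s_a$, so the term $\frac{ts_a-2\dim(B)c_a}{p_a(x)+c_a}$ and the system defining $l_{\ext}$ scale inhomogeneously; the inequality genuinely depends on more than the ratios. This is visible in the paper's own rank-one computation (Proposition~\ref{prop-rank-one}), where after substituting $c=\alpha p$ a residual dependence on $p$ remains and must be handled by a separate linearity-in-$p$ argument. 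Moreover, restricting a multivariate polynomial inequality to the boundary of the parameter region would require a monotonicity or concavity statement in $p_1$ that you neither formulate nor prove. Finally, appealing to the Python program ``run across the parameter ranges to certify the bound'' is not a proof: the program only outputs the symbolic fraction; it cannot verify an inequality quantified over an unbounded region. The paper closes precisely this gap by hand, exhibiting for each vertex explicit one-variable polynomials $R(c)$, $S(c)$ with $0<R(c)\leq P(c,p_1,p_2)$ and $0<S(c)$, $Q(c,p_1,p_2)\leq S(c)$ whenever $c\geq 7p_2$ and $p_2\geq p_1\geq 1$, whence $P/Q\geq R/S\geq 0$. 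Two smaller points: your heuristic that $(-1,-1)$ is the binding vertex ignores the variation of $l_{\ext}$ over the vertices (the paper checks all three, and its displayed estimate is for $(-1,2)$); and the constant $7$ is not a sharp threshold, so the inequality does not hold ``exactly when'' $\lambda\geq 7p_2$ --- the paper notes that for $B=\P^3$, $p_1=1$, $p_2=2$ one already has existence for $c\geq 7.09$, well below $7p_2=14$.
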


Here, \(Y\) is a semisimple principal fibration over the base \(B\), with fiber the projective space \(X=\mathbb{P}^2\). 
More generally, projectivizations of direct sums of line bundles can often be considered as semisimple principal fibrations, as explained in Section~\ref{section-projective-bundles}. 

The article is organized as follows. 
In Section~\ref{section-labelled-polytopes}, we prove a general sufficient condition for weighted uniform K-stability of labelled polytopes, and consider the special case of monotone polytopes. 
Section~\ref{section-geometric} explains the geometric origin of weighted uniform K-stability of labelled polytopes, with a particular emphasis on semisimple principal toric fibrations. 
In Section~\ref{section-applications}, we put together the two aspects to prove Theorem~\ref{thm-intro} and Corollary~\ref{cor-intro} using the monotone case of Section~\ref{section-labelled-polytopes}, as well as more general statements. 
We present various examples of applications of the sufficient condition in Section~\ref{section-examples}, including Proposition~\ref{prop-intro}. 
Finally, we include in an appendix elementary Python programs computing the sufficient condition for fibrations with only one factor in the basis, and a one or two dimensional Fano fiber. 

\subsection*{Acknowledgements}
The authors are very grateful to Vestislav Apostolov and Eveline Legendre for their valuable comments on the manuscript. 
The authors also thank the referee for his useful suggestions and corrections.   
 The first author is partially funded by ANR-21-CE40-0011 JCJC project MARGE and ANR-18-CE40-0003-01 JCJC project FIBALGA, as well as PEPS JCJC INSMI CNRS projects 2021 and 2022. The second author was supported by PhD fellowships of the UQAM and of the Université de Toulouse III - Paul Sabatier.

\section{Weighted K-stability of labelled polytopes: a sufficient condition}
\label{section-labelled-polytopes}

\subsection{Weighted K-stability of labelled polytopes}

Let \(V\) be an affine space of dimension \(\ell\), equipped with a fixed Lebesgue measure \(dx\). 
A \emph{labelled polytope} in \(V\) is a pair \((P,\textbf{L})\) where \(P\) is a (compact, convex) polytope in \(V\) and \(\textbf{L}=(L_j)_{j=1}^d\) is a minimal set of defining affine functions for \(P\), that is, 
\[ P = \{ x\in V \mid \forall j, \quad L_j(x) \geq 0\} \]
where \(d\) is the number of facets (codimension one faces) of \(P\). 
We denote by \(F_j:= \{ x\in P \mid L_j(x) = 0\} \) the facet of \(P\) defined by \(L_j\).

\begin{define}
The \emph{labelled boundary measure} \(d\sigma\) is the measure on \(\partial P\) whose restriction to the facet \(F_j\) is defined by 
\( dL_j \wedge d \sigma = -dx. \)
\end{define}

Note that the labelled boundary measure depends heavily on the choice of labelling \((L_j)\). 
For example, for any tuple \((r_j)\) of positive real numbers, the tuple \((L_j')=(r_jL_j)\) is another labelling of \(P\). 
The associated labelled boundary measure \(d\sigma'\) satisfies \(d\sigma' = \frac{1}{r_j}d\sigma\) on \(F_j\). 
In particular, if the \(r_j\) are not all equal, there is no obvious relation between \(d\sigma\) and \(d\sigma'\). 
Similarly, the notion of weighted uniform K-stability that we are about to define depends heavily on the choice of labelling. 

Let \(\v\) be a continuous, positive function on \(P\), and let \(\w\) be a continuous function on \(P\). 
Following \cite{SKD, AL, LLS}, we define the \emph{$(\v,\w)$-Donaldson--Futaki invariant} of the labelled polytope $(P,\textbf{L})$ as the functional 
\(\DF\) on the space of continuous functions on \(P\) such that
\begin{equation}{\label{define-futaki}}
  \DF(f):=  2\int_{\partial P} f(x)\v(x) d\sigma -  \int_P  f(x)  \w(x)  dx.
\end{equation}
Following \cite{NS,TH}, we also set 
\[ 
\lVert f \rVert_J = \inf_{l\in \mathrm{Aff}(V)} \int_P \left(f+l-\inf_P(f+l)\right) dx 
\]
where \(\mathrm{Aff}(V)\) denotes the space of affine functions on \(V\). 

\begin{define}
A labelled polytope \((P,\textbf{L})\) is \((\v,\w)\)-uniformly K-stable if there exists a \(\lambda >0\) such that for any continuous convex functions \(f\) on \(P\), 
\begin{equation}
\label{general-stability-polytope}
\DF(f) \geq \lambda \lVert f \rVert_J.
\end{equation} 
\end{define}

\begin{remark}
Note that \(\DF\) is linear, and the right-hand side of \eqref{general-stability-polytope} is always non-negative, hence the following is a necessary condition for \eqref{general-stability-polytope} to hold: 
\begin{equation}{\label{annulation-Futaki}}
    \forall f\in \mathrm{Aff}(V), \DF(f)=0.
\end{equation}
\end{remark}

We will explain in Section~\ref{section-geometric} the geometric significance of this notion for various choices of \(\v\) and \(\w\), let us for now just highlight that when \(\v\) and \(\w\) are constant, the functional \(\DF\) first appeared in \cite{SKD} as an expression of the (Donaldson--)Futaki invariant for toric test configurations in the study of K-stability of toric manifolds, whence the name. 

We denote by \(\mathcal{CV}^0(P)\) the space of continuous convex functions on \(P\), and by \(\mathcal{CV}^1(P)\) the space of all convex functions \(f\) on \(P\) which are the restrictions to \(P\) of a continuously differentiable function defined on an open subset of \(V\) containing \(P\). 
Note that by uniform approximation by smooth functions, it is enough to consider only functions in \(\mathcal{CV}^1(P)\) to check condition \eqref{general-stability-polytope}. 

 In order to deal more efficiently with the right hand side of~\eqref{general-stability-polytope}, following \cite{SKD}, we consider the following normalization of functions.
 We choose a point \(x_0\) in the interior \(P^0\) of the polytope \(P\). 
It allows to choose a linear complement \(\mathcal{CV}_*^1(P)\) to \(\mathrm{Aff}(V)\) in \(\mathcal{CV}^1(P)\), defined by 
\begin{equation}{\label{normalized-function-polytope}}
     \mathcal{CV}^{1}_*(P):=\{ f \in \mathcal{CV}^{1}(P) \mid \forall x, f(x) \geq f(x_0)=0 \}.
\end{equation}
Then, any $f\in \mathcal{CV}^{1}(P)$ can
be written uniquely as $f = f^*+f_0$, where $f_0$ is affine and \(f^*\in \mathcal{CV}_*^1(P)\), and we will use these notations in the following. 
By linearity, \(\DF(f) = \DF(f^*)\) if \(\DF\) vanishes on \(\mathrm{Aff}(V)\). 

\begin{lemma}
{\label{uniform-K-stable}}
The labelled polytope \((P,\textbf{L})\) is \((\v,\w)\)-uniformly K-stable if and only if 
there exists $\lambda > 0$ such that for all  $f \in \mathcal{CV}^{1}(P)$, 
\begin{eqnarray}{\label{uniform-equation}}
\DF(f) \geq \lambda \lVert f^*\rVert_{L_1}
\end{eqnarray}
where $\lVert \cdot \rVert_{L_1}$ denotes the $L^1$-norm on $P$ with respect to the Lebesgue measure \(dx\). 
\end{lemma}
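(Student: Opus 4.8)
The plan is to notice that the left-hand sides of \eqref{general-stability-polytope} and \eqref{uniform-equation} are literally the same quantity \(\DF(f)\). Hence the equivalence of the two conditions, with possibly different constants \(\lambda\) (which is all the statement asks), reduces to proving that the two functionals on the right, \(\lVert f\rVert_J\) and \(\lVert f^*\rVert_{L_1}\), are equivalent over \(\mathcal{CV}^1(P)\); the remark preceding the lemma lets us restrict testing of \eqref{general-stability-polytope} to \(\mathcal{CV}^1(P)\), so both conditions may be read on the same space. Since \(\lVert\cdot\rVert_J\) is unchanged by adding an affine function, \(\lVert f\rVert_J=\lVert f^*\rVert_J\), and since \(f^*\geq 0\) we have \(\lVert f^*\rVert_{L_1}=\int_P f^*\,dx\). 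So everything comes down to comparing \(\lVert f^*\rVert_J\) with \(\int_P f^*\,dx\) for \(f^*\in\mathcal{CV}^{1}_*(P)\). Note that no vanishing of \(\DF\) on affine functions is needed, precisely because \(\DF(f)\) is common to both inequalities.

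The computation I would record first is the closed formula \(\lVert g\rVert_J=\int_P g\,dx-\mathrm{vol}(P)\,g(b)\), valid for every convex \(g\), where \(b\) denotes the barycenter of \(P\). To obtain it, use affine invariance to subtract from \(g\) its tangent affine function at \(b\); the resulting \(\hat g\) is nonnegative with \(\hat g(b)=0\), its linear part integrates to \(0\) against \(dx\) because \(b\) is the barycenter, and \(\min_P(\hat g+\langle\xi,\cdot-b\rangle)\leq 0\) for every linear \(\xi\), so the infimum defining \(\lVert\hat g\rVert_J\) is attained at \(\xi=0\) and equals \(\int_P\hat g\,dx=\int_P g\,dx-\mathrm{vol}(P)\,g(b)\). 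Applying this to \(g=f^*\) and using \(f^*(b)\geq 0\) gives at once \(\lVert f^*\rVert_J\leq\int_P f^*\,dx=\lVert f^*\rVert_{L_1}\); combined with the first paragraph, this already yields the implication \eqref{uniform-equation}\(\Rightarrow\)\eqref{general-stability-polytope}.

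The real content is the reverse estimate \(\lVert f^*\rVert_J\geq c\,\lVert f^*\rVert_{L_1}\) for a uniform \(c>0\), and by the formula above this is exactly a uniform strict Jensen gap: there is \(c\in(0,1)\) with \(f^*(b)\leq(1-c)\frac{1}{\mathrm{vol}(P)}\int_P f^*\,dx\) for all \(f^*\in\mathcal{CV}^{1}_*(P)\). This is the main obstacle. I would prove it by compactness: normalize \(\int_P f^*\,dx=\mathrm{vol}(P)\); an interior point where \(f^*\) is large would, by convexity (the value at the centre of a ball is at most the average over the ball), force a large integral, so such \(f^*\) are locally uniformly bounded, hence locally uniformly Lipschitz, on \(P^0\). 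A sequence with \(f^*_n(b)\) approaching the supremum then has a subsequence converging locally uniformly on \(P^0\) to a convex \(g_\infty\geq 0\) with \(g_\infty(x_0)=0\). Were the supremum equal to \(1\), we would have \(g_\infty(b)=1\), while Fatou gives \(\int_P g_\infty\leq\mathrm{vol}(P)\); Jensen \(\mathrm{vol}(P)\,g_\infty(b)\leq\int_P g_\infty\) then forces equality, so \(g_\infty\) is affine, and having an interior minimum equal to \(0\) it must vanish identically, contradicting \(g_\infty(b)=1\). Thus the supremum is some \(\Theta<1\), and \(c=1-\Theta\) works: \(\lVert f^*\rVert_J=\int_P f^*\,dx-\mathrm{vol}(P)\,f^*(b)\geq c\int_P f^*\,dx\), which gives \eqref{general-stability-polytope}\(\Rightarrow\)\eqref{uniform-equation} and completes the equivalence.
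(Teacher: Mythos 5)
Your proof is correct, but it follows a genuinely different route from the paper's. The paper's proof is essentially a citation: it invokes \cite[Proposition~4.1~(3)]{NS}, which supplies a constant \(C_1>0\) with \(\lVert f \rVert_J \leq \lVert f \rVert_{L^1} \leq C_1 \lVert f \rVert_J\) for (normalized) continuous convex functions, after which the equivalence of \eqref{general-stability-polytope} and \eqref{uniform-equation} follows by exactly the reduction you carry out in your first paragraph: both conditions share the left-hand side \(\DF(f)\), the functional \(\lVert \cdot \rVert_J\) is unchanged by adding affine functions so \(\lVert f \rVert_J = \lVert f^* \rVert_J\), and uniform approximation reduces \eqref{general-stability-polytope} to a test over \(\mathcal{CV}^1(P)\); you are also right that neither argument needs \(\DF\) to vanish on \(\mathrm{Aff}(V)\). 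What you do differently is to \emph{prove} the two-sided norm comparison rather than quote it. Your closed formula \(\lVert g \rVert_J = \int_P g\,dx - \mathrm{vol}(P)\,g(b)\), with \(b\) the barycenter, is correct (the computation showing the infimum is attained at the supporting affine function of \(g\) at \(b\), because constants cancel and the linear part centred at \(b\) has zero mean, is exactly right), and it gives the easy inequality \(\lVert f^*\rVert_J \leq \lVert f^*\rVert_{L^1}\), hence \eqref{uniform-equation}\(\Rightarrow\)\eqref{general-stability-polytope}. The hard inequality is your uniform Jensen gap \(\Theta < 1\), obtained by compactness: local boundedness from Jensen on small balls, hence local Lipschitz bounds and a locally uniformly convergent subsequence on \(P^0\); then Fatou plus the equality case of Jensen force the limit to be affine, hence identically zero by the interior minimum at \(x_0\), contradicting \(g_\infty(b)=1\). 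This is sound, and it is notably the same mechanism (normalized sequence, local uniform convergence, a limit forced to be affine must vanish) that the paper itself deploys to prove its main technical result, Theorem~\ref{prop-combinatorial}, so your argument stays within the paper's toolkit. The trade-off: the paper's proof is two lines but leans on the literature for the key estimate, while yours is self-contained, and your explicit formula for \(\lVert\cdot\rVert_J\) --- identifying the optimal affine competitor as the tangent at the barycenter, a fact the paper never states --- is a pleasant by-product.
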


\begin{proof}
From \cite[Proposition~4.1~(3)]{NS}, there exists a constant \(C_1>0\) such that for all continuous convex functions on \(P\), 
\[ \lVert f \rVert_J \leq \lVert f \rVert_{L^1} \leq C_1\lVert f \rVert_J \]
The equivalence between condition~\eqref{general-stability-polytope} and condition~\eqref{uniform-equation} follows immediately. 
\end{proof}

\begin{remark}
Condition~\eqref{uniform-equation} is the condition that we will effectively use in the sequel, so one might wonder why we introduced the first definition. 
The point is that by Lemma~\ref{uniform-K-stable}, condition~\eqref{uniform-equation} is independent of the choice of \(x_0\), and condition~\eqref{general-stability-polytope} makes it perfectly clear.
In the more familiar unweighted case, the equivalence between various notions of K-stability of polytopes was fully worked out by Nitta and Saito \cite{NS}. 
\end{remark}

\subsection{The sufficient condition}

Let  \(\mathcal{C}^{0}(P,\R)\) denote the space of continuous functions on \(P\), and let \(\mathcal{C}^{1}(P,\R)\) denote the space of functions that are the restriction to \(P\) of continuously differentiable functions defined in an open subset of \(V\) containing \(P\). 

Recall that \(F_j\) denotes the facet of \(P\) defined by \(L_j\). 
For each \(j\), let \(P_j\) be the cone with basis \(F_j\) and vertex \(x_0\) as illustrated in Figure~\ref{figure-cone}.
For a function \(f\in \mathcal{C}^1(P,\R)\), we denote by \(d_xf\) its differential at \(x\in P\). 
The following is the main technical result of our paper, it imitates quite closely part of the proof by Zhou and Zhu \cite{ZZ} of a coercivity criterion for the modified Mabuchi functional on toric manifolds. 

\begin{figure}
\label{figure-cone}
\centering
\caption{The cone decomposition}
\begin{tikzpicture}  
\draw (0,0) node{$\bullet$};
\draw (0,0) node[left]{\(x_0\)};
\draw [thick] (-2,-2) -- (2,-2) -- (2,0) -- (0,2) -- (-2,2) -- cycle;
\draw (-1,2) node[above]{$F_j$};
\draw (-0.5,1.2) node{$P_j$};
\draw [dashed] (0,0) -- (-2,-2);
\draw [dashed] (0,0) -- (2,-2);
\draw [dashed] (0,0) -- (2,0);
\draw [dashed] (0,0) -- (0,2);
\draw [dashed] (0,0) -- (-2,2);
\end{tikzpicture}
\end{figure}
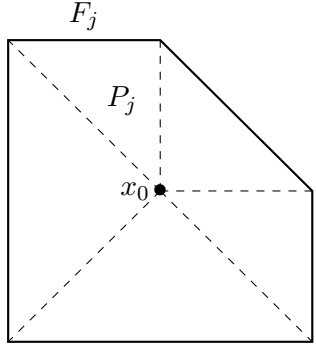

\begin{theorem}{\label{prop-combinatorial}}
Let $\v \in \mathcal{C}^1(P,\R)$ be a positive function on \(P\) and let \(\w \in \mathcal{C}^0(P,\R)\). Assume that \(\DF\) vanishes on \(\mathrm{Aff}(V)\) and that for all $j=1, \dots, d$, for all \(x \in P_j\), 
\begin{equation}{\label{combinatorial1}}
    \frac{1}{L_j(x_0)}\left(\v(x)( \ell+1) + d_x \v(x-x_0) \right) - \frac{\w(x)}{2} \geq 0,
\end{equation}
then $(P,\textbf{L})$ is $(\v,\w)$-uniformly K-stable. 
\end{theorem}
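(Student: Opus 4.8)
The plan is to verify the criterion in the form provided by Lemma~\ref{uniform-K-stable}: since $\DF$ vanishes on $\mathrm{Aff}(V)$ we have $\DF(f)=\DF(f^*)$, so it suffices to produce a $\lambda>0$ with $\DF(g)\geq\lambda\lVert g\rVert_{L^1}$ for every $g\in\mathcal{CV}^1_*(P)$, i.e. for every convex $g\in\mathcal{C}^1$ with global minimum $g(x_0)=0$ and $g\geq0$. Fixing such a $g$, the heart of the argument is to rewrite the boundary integral $\int_{\partial P}g\v\,d\sigma$ as an interior integral through the cone decomposition $P=\bigcup_j P_j$, imitating Zhou--Zhu.

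On each cone $P_j$ I would integrate by parts using the radial (Euler) vector field $X=x-x_0$. Its divergence is $\ell$, it vanishes at the apex $x_0$, and it is tangent to the lateral faces of $P_j$ (themselves cones with apex $x_0$), so the divergence theorem only sees the facet $F_j$; there a direct check gives $\langle X,\nu\rangle\,dA=L_j(x_0)\,d\sigma$, which matches the labelled boundary measure. This yields, for any $h\in\mathcal{C}^1(P,\R)$,
\[ \int_{P_j}\bigl(\ell\,h+d_xh(x-x_0)\bigr)\,dx=L_j(x_0)\int_{F_j}h\,d\sigma. \]
Applying it with $h=g\v$, expanding $d_x(g\v)(x-x_0)=\v\,d_xg(x-x_0)+g\,d_x\v(x-x_0)$, and feeding in the convexity inequality $d_xg(x-x_0)\geq g(x)-g(x_0)=g(x)\geq0$, then writing $\ell g\v=(\ell+1)g\v-g\v$ and summing over $j$, I expect the clean identity
\[ \DF(g)=\sum_j\frac{2}{L_j(x_0)}\int_{P_j}\v\bigl(d_xg(x-x_0)-g\bigr)\,dx+\sum_j\int_{P_j}g\left[\frac{2}{L_j(x_0)}\bigl((\ell+1)\v+d_x\v(x-x_0)\bigr)-\w\right]dx. \]
Both summands are manifestly nonnegative: the first because $\v>0$ and $d_xg(x-x_0)\geq g$, the second because $g\geq0$ and the bracket is exactly twice the hypothesis \eqref{combinatorial1}. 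This already proves $\DF(g)\geq0$.

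The main obstacle is promoting this nonnegativity to the uniform bound $\DF(g)\geq\lambda\lVert g\rVert_{L^1}$ with $\lambda>0$, and here the assumption that $\DF$ vanishes on $\mathrm{Aff}(V)$ is genuinely essential rather than a normalization. Indeed, dropping it one may take $P=[-1,1]$, $x_0=0$, $\v\equiv1$, $\w\equiv4$, for which \eqref{combinatorial1} holds with equality while $\DF(1)=-4\neq0$ and $\DF(g_\varepsilon)/\lVert g_\varepsilon\rVert_{L^1}\to0$ along $g_\varepsilon=\sqrt{x^2+\varepsilon^2}-\varepsilon\to\lvert x\rvert$; so pointwise positivity alone cannot give uniformity. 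The positive constant must instead be extracted from the strict positivity of $\v$ on the compact $P$ (so $\v\geq v_{\min}>0$): the first sum is then bounded below by $2c_1\int_P\bigl(d_xg(x-x_0)-g\bigr)\,dx$ with $c_1=v_{\min}/\max_jL_j(x_0)>0$, a quantity controlling the "non-radial" part of $g$, while the second sum must absorb the remaining near-radial functions, for which the convexity gap degenerates.

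The delicate point is that $\int_P\bigl(d_xg(x-x_0)-g\bigr)\,dx$ alone does \emph{not} dominate $\lVert g\rVert_{L^1}$ (it tends to $0$ along the $g_\varepsilon$ above), so the second sum cannot be discarded; it is precisely the Futaki-vanishing relation $\DF(1)=0$ that forbids the bracket from vanishing identically and supplies the missing slack. I expect the cleanest way to conclude is a normalization/compactness argument on $\{g\in\mathcal{CV}^1_*(P):\lVert g\rVert_{L^1}=1\}$: from the displayed identity, $\DF(g)=0$ forces both summands to vanish, hence $d_xg(x-x_0)=g$ almost everywhere, which for such $g$ forces degree-one homogeneity about $x_0$ and therefore $g$ affine, i.e. $g\equiv0$ — contradicting $\lVert g\rVert_{L^1}=1$. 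Making this strict positivity quantitative, and transporting it back to the $\lVert\cdot\rVert_J$ formulation via Lemma~\ref{uniform-K-stable} and the norm equivalence of \cite{NS}, should deliver the required $\lambda>0$ and hence $(\v,\w)$-uniform K-stability.
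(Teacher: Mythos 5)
Your cone-decomposition identity is exactly the one the paper derives (equation \eqref{alternativ-expression}): same Euler vector field, same identification of $\langle x-x_0,\nu\rangle\,dA$ with $L_j(x_0)\,d\sigma$ on $F_j$, same splitting of $\DF$ into a convexity term and a term weighted by the bracket of \eqref{combinatorial1}. Your observation that the vanishing of $\DF$ on $\mathrm{Aff}(V)$ is essential rather than a normalization (the $\v\equiv1$, $\w\equiv4$ example on $[-1,1]$) is correct and is a genuine insight not spelled out in the paper. Up to and including the conclusion $\DF(g)\geq0$ for all $g\in\mathcal{CV}^{1}_*(P)$, your argument agrees with the paper's proof.

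The gap is the last step, which is where the theorem actually lives. Your plan --- on the sphere $\{g\in\mathcal{CV}^{1}_*(P):\lVert g\rVert_{L^1}=1\}$, show that $\DF(g)=0$ forces $g$ homogeneous, hence (being $C^1$) affine, hence zero, then ``make this quantitative'' --- proves only that $\DF>0$ at each point of a \emph{non-compact} set, and this cannot be promoted to a uniform $\lambda>0$ by any soft argument: your own family $g_\varepsilon\to\lvert x\rvert$ shows that minimizing sequences exit $\mathcal{CV}^{1}_*(P)$ and converge only to non-smooth convex functions. For such limits the implication you rely on is false: a merely convex function satisfying Euler's relation $d_xg(x-x_0)=g(x)$ a.e.\ is homogeneous of degree one about $x_0$ but need not be affine (e.g.\ $\lvert\langle v,x-x_0\rangle\rvert$); ``homogeneous $\Rightarrow$ affine'' holds precisely in the smooth class where compactness fails, and fails in the class of limits where compactness holds. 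The paper's proof instead normalizes by $\int_{\partial P}\v f^*\,d\sigma=1$, invokes Donaldson's estimates and compactness for convex functions (\cite[Lemma~5.1.3, Corollary~5.2.5]{SKD}, see also \cite{NS}) to extract a locally uniform limit $f^*_\infty$, argues that $f^*_\infty\equiv0$, and then contradicts the boundary normalization since $\int_Pf_k^*\w\,dx\to0$ while $2\int_{\partial P}\v f_k^*\,d\sigma=2$. Note that excluding a nonzero cone-function limit (the step your sketch skips, and about which the paper's own wording is terse) needs the second sum of the identity together with the \emph{full} Futaki hypothesis, not only $\DF(1)=0$: vanishing on all of $\mathrm{Aff}(V)$ says that the non-negative measure $B\,dx$ defined by the bracket in \eqref{combinatorial1} has positive total mass and barycenter $x_0$, whereas $\int_P Bf^*_\infty\,dx=0$ for a nonzero homogeneous limit would confine $B\,dx$ to the zero-cone of $f^*_\infty$, a set contained in a closed half-space through $x_0$; the barycenter condition then forces $B=0$ a.e., contradicting the positive mass. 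Without an argument of this kind --- that is, without confronting the non-smooth limits --- your proposal establishes nonnegativity (a semistability-type statement) but not the uniform bound that the theorem asserts.
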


\begin{proof}
Since \(L_j(x)=0\) for \(x\in F_j\), we have \(L_j(x_0)=d_xL_j(x_0-x)\) for all \(x\in F_j\). In particular, 
\[ \int_{F_j} f(x) \v(x) d\sigma = \int_{F_j} f(x)\v(x) \frac{-d_xL_j(x-x_0)}{L_j(x_0)}  d \sigma. \]
For each facet \(F\) of \(\partial P_j\) different from \(F_j\), and \(x\in F\), the interior product \(\iota_{x-x_0} (dx)\) vanishes since it vanishes on the affine space spanned by \(F\).  
If we further use that \(-dL_j\wedge d\sigma = dx\) on \(F_j\), we obtain  
\[ \int_{F_j} f(x) \v(x) d\sigma = \frac{1}{L_j(x_0)}\int_{\partial P_j} f(x)\v(x) \iota_{x-x_0} \left(dx\right). \]
Hence by Stokes theorem we obtain
\[ \int_{F_j} f(x) \v(x) d\sigma = \frac{1}{L_j(x_0)} \int_{P_j} \left( \v(x) d_xf(x-x_0) + \ell f(x) \v(x) + f(x) d_x\v(x-x_0) \right) dx. 
\]
Summing the previous identities over $j$ we get
\begin{equation}{\label{alternativ-expression}}
\begin{split}
\DF(f)=&\sum_{j=1}^d\frac{2}{L_j(x_0)}  \int_{P_j} \left( d_x f(x-x_0) - f(x) \right)\v(x)dx \\
+&\sum_{j=1}^d \int_{P_j} \left( \frac{2}{L_j(x_0)}\left((\ell+1) \v(x) + d_x\v(x-x_0)  \right) - \w(x)  \right) f(x) dx.
\end{split}
\end{equation}

 Assume condition~\eqref{combinatorial1} is satisfied and $(P,\textbf{L})$ is not $(\v,\w)$-uniformly K-stable. 
 
 We will show contradiction to a stronger condition than condition~\eqref{uniform-equation}. 
 Namely, assume that condition~\eqref{combinatorial1} is satisfied and that there exists a sequence of $\{f_k\}_{k \in \mathbb{N}}$ in $\mathcal{CV}^{1}(P)$ such that
\begin{equation}{\label{prop-sequence}}
    \lim\limits_{k \rightarrow \infty} \DF(f^*_k)=0 \qquad \text{and} \qquad \forall k\in \mathbb{N}, \int_{\partial P} \v(x) f^*_k(x) \mathop{d\sigma}=1.
\end{equation}
 
 Recall from \cite[Lemma~5.1.3]{SKD} (see \cite[Proposition~5.1.2]{NS} for a detailed proof and explicit constant \(C\)) that there exists a positive constant \(C>0\) such that for all \(f\in \mathcal{CV}^1(P)\), 
 \[ \int_{\partial P} f^*(x) \mathop{d\sigma} \geq C \lVert f^* \rVert_{L^1}. \]
 As a consequence, since \(\v>0\) on \(P\), there exists a constant \(C'>0\) such that for all \(f\in \mathcal{CV}^1(P)\), 
 \[ \int_{\partial P} \v(x) f^*(x) \mathop{d\sigma} \geq C' \lVert f^* \rVert_{L^1}. \]
 In particular, the sequence \(\{f^*_k\}\) has bounded \(L^1\) norm. 
 By \cite[Corollary 5.2.5]{SKD},  
 $\{f^*_k\}_{k\in \mathbb{N}}$ converges (up to a sub-sequence, still denoted by $f^*_k$) locally uniformly in $P^0$ to a convex function $f^*_{\infty}$ which still satisfies \(\inf f^*_{\infty} = f^*_{\infty}(x_0) = 0\). Since in addition all $f^*_k$ are smooth and convex we have $d_x f^*_k(x-x_0) - f^*_k(x)  \geq  0$. Then, since condition~\eqref{combinatorial1} is assumed to hold, all terms of the sum in \eqref{alternativ-expression} are non-negative. 
Evaluating~\eqref{alternativ-expression} at $f^*_k$ and passing to the limit reveals that  $\lim_{k\rightarrow \infty} d_x f^*_k(x-x_0) - f^*_k(x)  = 0$ almost everywhere in $P^0$, showing that $f^*_{\infty}$ is affine on $P^0$. Using again  \(\inf f^*_{\infty} = f^*_{\infty}(x_0) = 0\), we conclude that $f^*_{\infty}$ is the zero function on $P^0$.  

The local uniform convergence of \(\{f^*_k\}\) to the zero function shows that 
\[ \lim_{k\to \infty} \int_P f^*_k (x)\w(x) \mathop{dx} = 0 \]
that is, 
\[ \lim_{k\to \infty} 2\int_{\partial P} f^*_k(x) \v(x) \mathop{d\sigma} - \DF(f^*_k) = 0 \]
which is in contradiction with condition~\eqref{prop-sequence}. 

From this contradiction it follows that there exists a constant \(\mu>0 \) such that for all \(f\in \mathcal{CV}^1(P)\), 
\begin{align*} 
\DF(f) & \geq \mu \int_{\partial P} \v (x)f^*(x) \mathop{d\sigma} \\
& \geq \mu C' \lVert f^* \rVert_{L^1} 
\end{align*}
which concludes the proof. 
\end{proof}

\begin{remark}
\label{remark-varying}
We stress that the property of \((\v,\w)\)-uniform K-stability is independent of the choice of \(x_0\in P^0\) in the previous section, but condition~\eqref{combinatorial1} \emph{depends} on that choice. 
It is possible and useful in practical uses of the condition to vary this \(x_0\) according to the data of the problem, see \ref{section-varying}.
\end{remark}

\begin{remark}
\label{remark-continuous}
Condition~\eqref{combinatorial1} depends continuously on the labelled polytope, the weights \(\v\) and \(\w\), and the choice of \(x_0\).
\end{remark}

\subsection{Case of monotone polytopes}

Let us recall the terminology of monotone polytopes, used in \cite{Leg}. 

\begin{define}
A labelled polytope \((P,\textbf{L})\) is \emph{monotone} if there exists an \(x_0 \in P^0\) such that \(L_1(x_0)=L_2(x_0)=\cdots=L_d(x_0)\). 
\end{define}


There is thus an obvious choice of \(x_0\) in that case. Our sufficient condition indeed becomes much simpler in that case, since the decomposition of the polytope may essentially be forgotten. 

\begin{corollary}
\label{corollary-monotone}
Let \((P,\textbf{L})\) be a monotone labelled polytope with \(L_1(x_0)=L_2(x_0)=\cdots=L_d(x_0)=t\). Let $\v \in \mathcal{C}^1(P,\R)$ such that \(\v\) is positive on \(P\) and let \(\w \in \mathcal{C}^0(P,\R)\). Assume that \(\DF\) vanishes on \(\mathrm{Aff}(V)\) and that for all \(x \in P\), 
\begin{equation}\label{condition-monotone}
    \frac{1}{t}\left(\v(x)( \ell+1) + d_x \v(x-x_0) \right) - \frac{\w(x)}{2} \geq 0,
\end{equation}
then $(P,\textbf{L})$ is $(\v,\w)$-uniformly K-stable. 
\end{corollary}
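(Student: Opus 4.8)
The plan is to deduce this directly from Theorem~\ref{prop-combinatorial}, by exploiting the fact that monotonicity singles out a base point $x_0$ at which all the defining functions take the common value $t$, and that this collapses the family of cone-wise conditions~\eqref{combinatorial1} into a single condition on all of $P$.

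First I would fix, as base point in the cone decomposition, precisely the point $x_0 \in P^0$ provided by the definition of monotone labelled polytope, so that $L_1(x_0) = \cdots = L_d(x_0) = t$ by hypothesis. With this choice, the factor $\tfrac{1}{L_j(x_0)}$ appearing in~\eqref{combinatorial1} equals $\tfrac{1}{t}$ for every $j$, and the remaining expression $\v(x)(\ell+1) + d_x\v(x-x_0) - \tfrac{t}{2}\w(x)$ does not depend on $j$ at all. Thus for each $j$, condition~\eqref{combinatorial1} restricted to $P_j$ reads exactly as the inequality~\eqref{condition-monotone}, now regarded as a condition on points of the cone $P_j$.

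The second step is the elementary geometric observation that the cones $P_j$, each with base the facet $F_j$ and common vertex $x_0$, cover the whole polytope, i.e. $\bigcup_{j=1}^d P_j = P$. Indeed, for any $x \in P$ the ray emanating from $x_0 \in P^0$ through $x$ meets $\partial P$ in a point lying on some facet $F_j$, whence $x \in P_j$. Consequently, asserting inequality~\eqref{condition-monotone} at every point of $P$ is equivalent to asserting it at every point of each $P_j$, which by the previous step is precisely the hypothesis~\eqref{combinatorial1} of Theorem~\ref{prop-combinatorial} for all $j$.

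Having matched the hypotheses, I would simply invoke Theorem~\ref{prop-combinatorial}: since $\DF$ vanishes on $\mathrm{Aff}(V)$ by assumption and condition~\eqref{combinatorial1} holds for all $j$ and all $x \in P_j$, the labelled polytope $(P,\textbf{L})$ is $(\v,\w)$-uniformly K-stable, as claimed. There is no genuine obstacle here; the only point requiring a word of care is the covering statement $\bigcup_j P_j = P$, together with the remark that at the single distinguished $x_0$ the $j$-dependence of~\eqref{combinatorial1} disappears, which is exactly the content flagged in Remark~\ref{remark-varying} about the freedom in choosing $x_0$.
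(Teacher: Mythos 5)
Your proposal is correct and is exactly the argument the paper intends: choose the distinguished monotone point as the base point $x_0$, note that $L_j(x_0)=t$ makes condition~\eqref{combinatorial1} identical on every cone $P_j$, observe that $\bigcup_j P_j = P$, and invoke Theorem~\ref{prop-combinatorial}. The paper treats this as immediate (``the decomposition of the polytope may essentially be forgotten''), and your write-up simply makes that reasoning explicit.
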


The conditions involved form a finite set of conditions to check, contrary to the definition of \((\v,\w)\)-uniform K-stability. 
It is furthermore easy to implement in a computer program, \emph{via} formal or numerical computations depending on the data \((P,\textbf{L},\v,\w)\). 
The same is true for the more general Theorem~\ref{prop-combinatorial}, but the decomposition in cones makes it a bit more tedious.

\section{Geometric origin of weighted K-stability of polytopes}
\label{section-geometric}

\subsection{Weighted cscK toric manifolds}

The results from Section~\ref{section-labelled-polytopes} are motivated by the study of the existence of weighted cscK metrics on toric manifolds, as studied in \cite{SJ}. 

Let $\mathbb{T}$ be an $\ell$-dimensional compact torus. We denote by $\mathfrak{t}$ its Lie algebra and by $\Lambda \subset \mathfrak{t}$ the lattice of generators of circle subgroups, so that $\mathbb{T} = \mathfrak{t}/2\pi \Lambda$. 
Let $(X,\omega,\mathbb{T})$ be a compact Kähler toric manifold. 
Denote by \(\mu\) the moment map of \(X\) with respect to the action of \(\mathbb{T}\), and let $P=\mu(X) \subset \mathfrak{t}^*$ be the moment polytope. 
The polytope \(P\) is a Delzant polytope \cite{TD}, and in particular, there is a natural choice of labelling \(\mathbf{L}\) of \(P\) such that all the differentials \(dL_j\) of the defining affine functions \(L_j\) are primitive elements in the lattice \(\Lambda\). 

\begin{remark}
We focus here on smooth manifolds, but let us mention that the cases of orbifolds or pairs would also be natural settings to consider. In these situations, the labelling could be more general, thus justifying our choice to allow arbitrary labellings in the previous section. 
\end{remark}

In the context of toric manifold, the $\v$-weighted scalar curvature was introduced in \cite{LLS}. To avoid introducing too much notation, we give the definition of \cite{AL}, which makes sense for general compact K\"ahler manifold and coincide with the one of \cite{LLS} in the toric context. 
Let \(\mathcal{C}^{\infty}(P,\mathbb{R})\) denote the space of restrictions to  \(P\) of smooth functions defined on an open set containing \(P\), and \(\mathcal{C}^{\infty}(P,\mathbb{R}_{>0})\) the subspace of positive functions.

\begin{define}[Weighted cscK metrics]{\label{weighted-cscK}}

\noindent
\begin{enumerate}
    \item For \(\v \in \mathcal{C}^{\infty}(P,\mathbb{R}_{>0})\), define the \(\v\)-scalar curvature of \(\omega\) as the function 
    
\begin{equation*}
    Scal_{\mathrm{v}}(\omega):=\mathrm{v}(\mu)Scal(\omega)+ 2 \Delta_{\omega}\big(\mathrm{v}(\mu)\big) +  \textnormal{Tr}\big(G_{\omega} \circ (\textnormal{Hess}(\mathrm{v}) \circ \mu )\big),
\end{equation*}

\noindent  where  $Scal(\omega)$ is the usual scalar curvature of the Riemannian metric $g_{\omega}$ associated to $\omega$, $\Delta_{\omega}$ is the Riemannian Laplacian of $g_{\omega}$, $\textnormal{Hess}(\mathrm{v})$ is the Hessian of $\v$ viewed as a bilinear form on $\mathfrak{t}^*$  whereas  $G_{\omega}$ is the bilinear form with smooth coefficients on $\mathfrak{t}$, given by the restriction of $g_{\omega}$ on fundamental vector fields.
 
    \item If furthermore \(\w\in \mathcal{C}^{\infty}(P,\mathbb{R})\), then \(\omega\) is a \((\v,\w)\)-cscK metric if 
    \[ \operatorname{Scal}_{\v}(\omega)=\w\circ\mu \]
\end{enumerate}
\end{define}

In general, no YTD correspondence is proved for the existence of weighted cscK metrics on toric manifolds. 
However by analogy with the unweighted cscK case, there is a known candidate for the corresponding K-stability condition, which translates on the polytope as Definition~\ref{general-stability-polytope}. 
In fact, the direction from existence of weighted cscK metrics to K-stability was proved in general by Li, Lian, Sheng \cite{LLS17}. 

\begin{theorem}[{\cite[Theorem 2.1]{LLS17}}]
\label{cscK-to-stability}
If \(\omega\) is a \((\v,\w)\)-cscK metric, then \((P,\mathbf{L})\) is \((\v,\w)\)-uniformly K-stable. 
\end{theorem}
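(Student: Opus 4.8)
The plan is to translate the statement entirely into convex analysis on \(P\) via the symplectic (Guillemin--Abreu) dictionary, and to prove the stability inequality by testing the \((\v,\w)\)-cscK equation against convex functions. First I would recall that a \(\mathbb{T}\)-invariant Kähler metric \(\omega\) in the fixed class corresponds to a strictly convex symplectic potential \(u\) on \(P^0\) with the Guillemin boundary behaviour prescribed by the labelling \(\mathbf{L}\); writing \(G=\operatorname{Hess}(u)\) and \(H=G^{-1}\), the weighted scalar curvature of Definition~\ref{weighted-cscK} admits, in moment coordinates, the closed form (weighted Abreu equation)
\[
\operatorname{Scal}_{\v}(\omega) = -\sum_{i,j}\frac{\partial^2}{\partial x_i\,\partial x_j}\bigl(\v(x)\,H_{ij}(x)\bigr).
\]
Thus the hypothesis that \(\omega\) is \((\v,\w)\)-cscK becomes the pointwise identity \(-\sum_{i,j}\partial_i\partial_j(\v H_{ij})=\w\) on \(P^0\).

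The key step is an integration by parts. For any \(f\in\mathcal{CV}^1(P)\) I would multiply the weighted Abreu equation by \(f\) and integrate over \(P\), then integrate by parts twice. The interior term produces \(\int_P \v\sum_{i,j}H_{ij}\,\partial_i\partial_j f\,dx\), while the boundary contributions collapse — exactly as in Donaldson's unweighted computation — to \(2\int_{\partial P} f\,\v\,d\sigma\), where the labelled boundary measure \(d\sigma\) appears precisely because the conormal behaviour of \(H\) along each facet \(F_j\) is governed by the defining function \(L_j\). Since \(\v\) is smooth and positive up to \(\partial P\), it merely rescales this boundary term. Comparing with \eqref{define-futaki} yields the central identity
\[
\DF(f) = \int_P \v(x)\,\bigl\langle H(x),\operatorname{Hess}(f)\bigr\rangle\,dx,
\]
whose integrand \(\v\cdot\operatorname{Tr}\!\bigl(H\,\operatorname{Hess} f\bigr)\) is nonnegative because \(H>0\), \(\v>0\) and \(f\) is convex. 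This already gives \(\DF(f)\geq 0\) and vanishing on \(\mathrm{Aff}(V)\), i.e.\ K-semistability.

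To upgrade semistability to the uniform estimate \(\DF(f)\geq\lambda\lVert f\rVert_J\), I would first reduce, via Lemma~\ref{uniform-K-stable} and the boundary bound \(\int_{\partial P}\v f^*\,d\sigma\geq C'\lVert f^*\rVert_{L^1}\) recalled in the proof of Theorem~\ref{prop-combinatorial}, to producing a uniform lower bound of the quadratic expression \(\int_P\v\,\langle H,\operatorname{Hess}(f^*)\rangle\,dx\) by a multiple of \(\lVert f^*\rVert_{L^1}\) (equivalently by the boundary integral \(\int_{\partial P}\v f^*\,d\sigma\)). This is the main obstacle: the matrix field \(H\) degenerates as one approaches \(\partial P\) — its conormal eigenvalue vanishes while the tangential ones blow up — so the pointwise positivity of the integrand does not by itself deliver a uniform constant. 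I would control this using the a priori regularity of the fixed \((\v,\w)\)-cscK potential \(u\) up to the boundary and the resulting two-sided bounds on \(H\) in the boundary charts, which permit a comparison of \(\int_P\v\,\langle H,\operatorname{Hess} f^*\rangle\,dx\) with the boundary mass of \(f^*\).

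An alternative and perhaps cleaner route to the same uniform conclusion is variational: the identity above exhibits \(\DF\) as the asymptotic slope of a weighted Mabuchi-type functional \(\mathcal{M}\) whose critical points are the \((\v,\w)\)-cscK metrics and which is convex along the affine lines \(u_0+sf\) in the space of symplectic potentials. Existence of a minimizer \(u_0\) then forces the slope \(\lim_{s\to\infty}s^{-1}\mathcal{M}(u_0+sf)=\DF(f)\) to be nonnegative, and uniformity is extracted from the properness/coercivity of \(\mathcal{M}\) afforded by the existence of the critical metric (the weighted analogue of the Chen--Cheng theory). Whichever route is chosen, the genuinely technical point — and the step I expect to be hardest — is the passage from semistability to the uniform, \(J\)-norm lower bound, which hinges on uniform boundary control of the cscK metric rather than on any formal manipulation.
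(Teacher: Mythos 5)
First, a point of comparison: the paper does not prove this statement at all --- it is quoted from \cite{LLS17} --- so your proposal must be measured against that proof and against the closely parallel argument the paper \emph{does} give, namely the proof of Theorem~\ref{prop-combinatorial}. Your first half coincides with the actual proof: in symplectic coordinates the hypothesis becomes the weighted Abreu equation \(-\sum_{i,j}\partial_i\partial_j(\v\, u^{ij})=\w\), and Donaldson's double integration by parts (which goes through verbatim with the smooth positive factor \(\v\), thanks to the Guillemin boundary behaviour encoded by \(\mathbf{L}\)) yields the identity \(\DF(f)=\int_P \v\,\langle H,\mathrm{Hess}(f)\rangle\,dx\), whence semistability. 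Up to here you are on the standard track.

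The gap is in the only step that matters, the upgrade to uniformity, and neither of your two routes closes it. The actual mechanism --- used in \cite{LLS17}, going back to \cite{ZZ}, and reproduced almost word for word in the paper's proof of Theorem~\ref{prop-combinatorial} --- is a compactness/contradiction argument that needs \emph{no} boundary control of \(H\): suppose \(f_k\in\mathcal{CV}^1(P)\) satisfy \(f_k\geq f_k(x_0)=0\) and \(\int_{\partial P}\v f_k\,d\sigma=1\) while \(\DF(f_k)\to 0\); Donaldson's compactness \cite[Corollary~5.2.5]{SKD} gives a subsequence converging locally uniformly on \(P^0\) to a convex \(f_\infty\); interior positivity of \(\v H\) together with \(\DF(f_k)\to 0\) forces the Hessian measure of \(f_\infty\) to vanish, so \(f_\infty\) is affine, hence \(f_\infty\equiv 0\); and the contradiction with the normalization comes from the identity
\[
2\int_{\partial P}\v f_k\,d\sigma \;=\; \DF(f_k)+\int_P f_k\,\w\,dx,
\]
whose right-hand side tends to \(0\) (the second term by interior convergence plus the collar estimate \(\int_{\{\mathrm{dist}(\cdot,\partial P)<\varepsilon\}}f_k\,dx\leq C\varepsilon\int_{\partial P}f_k\,d\sigma\), a consequence of convexity and the normalization). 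This trick --- recovering the boundary mass from \(\DF\) plus an \emph{interior} integral --- is precisely what your proposal lacks. Your first route (two-sided bounds on \(H\) near \(\partial P\) to compare the Hessian integral directly with boundary mass) does not work as stated: a pointwise bound \(H\geq c\,H_0\) against the Guillemin reference, even if established (it degenerates at corners and is uninformative in facet-tangential directions), would only reduce \((\v,\w)\)-uniform stability to \((\v,\w_0)\)-uniform stability for the reference potential, i.e.\ to another instance of the very statement being proved, which again requires the compactness argument. Your variational route assumes that existence of the critical metric yields \emph{coercivity} of the weighted Mabuchi functional: in the weighted setting this implication is not an available black box and is at least as deep as the theorem itself --- what \cite{AL} provides from existence is boundedness below and semistability, and what \cite{SJ} proves goes in the opposite direction (stability implies coercivity implies existence) --- so invoking it here is circular. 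The slope computation you sketch is fine; the coercivity input is the whole difficulty.
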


The converse direction is in general much harder, but is known for special choices of weights. 
\begin{itemize}
    \item If \(\v\) and \(\w\) are constants, this is the uniform YTD conjecture for cscK metrics on toric manifolds. If \(\v\) is constant and \(\w\) is affine, this is the uniform YTD conjecture for extremal metrics on toric manifolds. 
    Both these conjectures were proved recently \cite{TH,Leg19,ApoLN,Li,LLS21,NS} thanks to the breakthrough of Chen--Cheng \cite{CCIII}, its adaptation by He to the extremal setting \cite{He}, and earlier works, notably \cite{SKD,ZZ}.
    
    \item If only \(\v\) is constant, the converse of Theorem~\ref{cscK-to-stability} is known for all \(\w \in \mathcal{C}^{\infty}(P,\mathbb{R})\) by \cite{LLS21}. 
    
    \item For \(\v\)-solitons on Fano toric manifolds, which correspond to choosing an arbitrary weight \(\v \in \mathcal{C}^{\infty}(P,\mathbb{R}_{>0})\) and \(\w(x)= 2(\ell\v(x) + d_x\v(x))\) (see \cite[Proposition~1]{AJL}), it was proved first in \cite{BB} that the converse of Theorem~\ref{cscK-to-stability} holds for general weight \(\v\), and much earlier in \cite{WZ04} for the weight corresponding to Kähler-Ricci solitons. We note that \cite{LH} proved that the general uniform YTD conjecture holds for \(\v\)-solitons on general Fano manifolds, and refer to Section~\ref{section-soliton} for a discussion of \(\v\)-solitons on semisimple principal toric fibrations. 
    
    \item Finally, as we shall explain in details in the next sections, the converse of Theorem~\ref{cscK-to-stability} was proven by the second author for weights corresponding to extremal Kähler metrics on semisimple principal toric fibrations \cite{SJ}. 
\end{itemize}

\subsection{Construction of semisimple principal toric fibration}

In this section we briefly recall the construction of semisimple principal toric fibrations introduced in \cite{ACGTII}. We take the point of view of \cite{AJL}, which generalized the construction when the fiber is not necessarily toric.

Let \(\mathbb{T}\) be an \(\ell\)-dimensional compact torus with Lie algebra $\mathfrak{t}$.
Let $ \pi_B : Q \longrightarrow B$ be a principal \(\mathbb{T}\)-bundle over a $n$-dimensional product of cscK manifold \((B,J_B,\omega_B):=\prod_{a=1}^k (B_a,J_a, \omega_a)\) satisfying the Hodge condition $\omega_a \in H^2(B_a,\mathbb{Z})$.  We supposed that $Q$ is equipped with a principal connection \(\theta  \in \Omega^1(Q) \otimes \mathfrak{t}\) whose curvature satisfies
\[ d\theta = \sum_{a=1}^k \pi_B^*(\omega_a) \otimes p_a, \]

\noindent where  \(p_a\in \mathfrak{t}\) define one-parameter subgroups of \(\mathbb{T}\).   Let \((X,J_X,\omega_X)\) be a toric projective manifold under the action of \(\mathbb{T}\). Since $\T$ acts on various spaces, to avoid confusion, we under-script the space on which $\T$ acts, e.g. $\T_X$ acts on $X$. We consider the $2(\ell+n)$ dimensional smooth manifold

\begin{equation}{\label{define-Y}}
    Y:=(X \times Q)/\mathbb{T},
\end{equation}

\noindent  where the $\mathbb{T}_{X\times Q}$-action on $(X \times Q)$ is given by $ \gamma \cdot  (x, q) = (\gamma \cdot x, \gamma^{-1} \cdot q)$, $x \in X$, $q \in Q$  and $\gamma \in  \mathbb{T}$.
Let $\mathcal{H}:=ann(\theta) \subset  TQ$ be the horizontal distribution on the principal bundle $Q$ with respect to  $\theta$. We consider the smooth section of the endomorphism of $T(X \times Q)$

\begin{equation*}
    J_Y:=J_X \oplus J_B,
\end{equation*}

\noindent where $J_B$ acts on $Q$ via the unique horizontal lift of vector fields on $B$ to $\mathcal{H}$. The section $J_Y$ is invariant with respect to the $\mathbb{T}_{Q\times X}$-action and it is shown in \cite[Section 5]{AJL} that $J_Y$ descends to a complex structure  (still denoted by $J_Y$) on $Y$.

Let $P \subset \mathfrak{t}^*$ be the Delzant polytope associated to  $(X,J_X,\omega_X,\T)$ \cite{TD}. By definition the $\T$-action on $X$ is hamiltonian and we denote by $\mu : X \longrightarrow P $ its moment map. We consider the $2$-form on $X\times Q$

\begin{equation}{\label{compatible-form}}
    \omega_Y:= \omega_X + \sum_{a=1}^k( p_a \circ\mu  + c_a)\pi_B^*(\omega_a) + d\mu \circ \theta,
\end{equation}

\noindent  where the \(k\)-tuple of real constants \((c_a)\) are such that for all \(a\), \(p_a\circ\mu + c_a>0\) and  $d\mu \circ \theta$ is a $2$-form understood as the contraction of the $\mathfrak{t}$-valued one form $\theta$ and the $\mathfrak{t}^*$-valued one form $d\mu$. It is shown in \cite[Section 5]{AJL} that $\omega_Y$ is basic with respect to  $\mathbb{T}_{X \times Q}$ and as such, it is the pullback of a K\"ahler form (still denoted by $\omega_Y$) on $Y$. The $\T_X$-action on $X$ induces an action on $X \times Q$ by the natural $\T_X$-action  on the first factor. This action commutes with  $\T_{X\times Q}$ and therefore descends to a $\T_Y$-action on $Y$. The K\"ahler form $\omega_Y$ on $Y$ defined via the basic $2$-form (\ref{compatible-form}) on $X \times Q$ is $\T_Y$-invariant (see \cite[Section 5]{AJL} for more details).

The K\"ahler metrics $\omega_a$ on $B_a$, the connection form $\theta$ and the constants $p_a \in \mathfrak{t}$ are fixed. The K\"ahler manifold $(Y,J_Y,\omega_Y,\T)$ is a fiber bundle over $B$ with fiber the toric K\"ahler manifold $(X,J_X,\omega_X,\T)$.  Following \cite{ACGT} we define:

\begin{define}{\label{definition-fibration}}
The K\"ahler manifold $(Y,J_Y,\omega_Y,\T)$ defined above is called a \emph{semisimple principal toric fibration}. The $\T$-invariant K\"ahler metric $\omega_Y$ on $Y$ defined from the $\T$-invariant K\"ahler metric $\omega_X$ on $X$ is called a \emph{compatible K\"ahler metric}. A K\"ahler class $\alpha_Y \in H^2(Y,\R)$ containing a compatible K\"ahler metric is called a \emph{compatible K\"ahler class}.
\end{define}

\noindent  In this setup, the constants $c_a$ can vary and they parameterize the compatible K\"ahler classes.

 Let $P^0$ be the interior of $P$ and $\mathring{X}:=\mu^{-1}(P^0)$ be the dense open subset of $X$  of regular orbits for the $\T_X$-action. The $\T$-action on $X$ extends to an effective holomorphic action of the complexified torus $\mathbb{T}^{\mathbb{C}}:= \T \otimes \mathbb{C}$. Fixing any point $x_0 \in \mathring{X}$, we can identify $(\mathring{X},J_X)$ and the orbit $\mathbb{T}^{\mathbb{C}} \cdot x_0 \cong (\mathbb{C}^*)^{\ell}$ of $x_0$:

\begin{equation}{\label{interior-x}}
(\mathring{X},J_X) \cong (\mathbb{C}^*)^{\ell}.
\end{equation}

\noindent Restricting the $\T_{X\times Q}$-action to $\mathring{X} \times Q$, we define

\begin{equation*}{\label{}}
    \mathring{Y}:= (\mathring{X} \times Q)/ \mathbb{T}.  
\end{equation*}

\noindent By (\ref{interior-x}) $\mathring{Y}$ is $\T$-equivariantly biholomorphic to a $(\mathbb{C}^*)^{\ell}$-bundle over $B$. Since $(\mathring{X},J_X)$ compactifies to $(X,J_X)$, the complex principal  $(\mathbb{C}^*)^{\ell}$-bundle $\mathring{Y}$ compactifies to $(Y,J_Y)$.

\begin{remark}{\label{blowdown}}
Semisimple principal toric fibrations $(Y,J_Y,\omega_Y,\T)$ constructed above correspond to semisimple rigid toric fibration introduced and studied in \cite{ACGTI, ACGTII, ACGTIII} when there is no blow-down and the basis $B$ is a global product of cscK Hodge manifolds.
\end{remark}

A particular case, which will be of interest for us, is the case of Fano semisimple principal toric fibrations. 
We recall a characterization of these from \cite{AJL}

\begin{lemma}[{\cite[Lemma~5.11]{AJL}}]
\label{Lemma-AJL}
Assume that each \(B_a\) is a Fano Kähler-Einstein manifold. 
Let \(\omega_a\) denote a Kähler-Einstein metric on \(B_a\) so that \(I_a[\omega_a]= 2\pi c_1(B_a)\), where \(I_a\) denotes the Fano index of \(B_a\). 
We fix a principal bundle with connection \((Q,\theta)\) as before (with associated data \((p_a)\)). 
We further assume that \((X,\omega_X)\) is a Fano toric manifold with a \(\mathbb{T}\)-invariant Kähler form \(\omega_X \in 2\pi c_1(X)\), with the natural choice of moment map \(\mu\). 
If for all \(a\), \(p_a \circ \mu + I_a > 0\), then the semisimple principal fibration \(Y\) associated to the above data is a Fano manifold, and \(\omega_Y\) is in \(2\pi c_1(Y)\) for the \(k\)-tuple \((c_a)=(I_a)\). 
\end{lemma}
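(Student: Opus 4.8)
The plan is to compute the cohomology class \(2\pi c_1(Y)\) from the relative tangent sequence of the fibration \(\pi\colon Y\to B\) and match it with \([\omega_Y]\) for the explicit form~\eqref{compatible-form}; the value \((c_a)=(I_a)\) will be forced by this comparison, and positivity of \(\omega_Y\) — hence Fano-ness — will follow from the hypothesis \(p_a\circ\mu+I_a>0\).

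First I would use the holomorphic short exact sequence \(0\to T_{Y/B}\to TY\to\pi^*TB\to 0\), which gives \(c_1(Y)=c_1(-K_{Y/B})+\pi^*c_1(B)\). The base term is immediate from the hypotheses: since \(I_a[\omega_a]=2\pi c_1(B_a)\), one has \(2\pi\,\pi^*c_1(B)=\sum_a I_a\,\pi_B^*\omega_a\) in \(H^2(Y,\R)\).

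The heart of the argument is to represent the relative anticanonical class and recognise it inside \(\omega_Y\). Because the fiber is the fixed toric manifold \(X\) and the structure group \(\mathbb{T}\) preserves each toric boundary divisor \(D_j\) (the preimage of the facet \(F_j\)), these globalise to relative divisors \(\mathcal{D}_j\subset Y\) with \(-K_{Y/B}=\sum_j\mathcal{D}_j\). I would equip each equivariant line bundle \(\mathcal{O}_X(D_j)\) with an invariant Hermitian metric of curvature \(\Omega_j\) and \(\mathfrak{t}^*\)-valued moment map \(m_j\), normalised so that \(\sum_j\Omega_j=\omega_X\) (possible since \(\omega_X\in 2\pi c_1(X)=\sum_j 2\pi[D_j]\)) and \(\sum_j m_j=\mu\). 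Applying the minimal coupling construction over \((Q,\theta)\), whose curvature is \(\sum_a\pi_B^*\omega_a\otimes p_a\), the induced metric on \(\mathcal{O}_Y(\mathcal{D}_j)\) has curvature \(\Omega_j+d\langle m_j,\theta\rangle\), paralleling the passage from \(\omega_X\) to~\eqref{compatible-form}; the same computation that shows \(\omega_Y\) is basic and closed shows this form descends to \(Y\). Summation then gives
\[ \sum_j\bigl(\Omega_j+d\langle m_j,\theta\rangle\bigr)=\omega_X+d\mu\circ\theta+\sum_a(p_a\circ\mu)\,\pi_B^*\omega_a, \]
a representative of \(2\pi c_1(-K_{Y/B})\). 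Comparing with~\eqref{compatible-form} yields \([\omega_Y]=2\pi c_1(-K_{Y/B})+\sum_a c_a[\pi_B^*\omega_a]\), whence \([\omega_Y]=2\pi c_1(Y)\) if and only if \(c_a=I_a\) for every \(a\). With \(c_a=I_a\), the hypothesis \(p_a\circ\mu+I_a>0\) is exactly the positivity requirement on the coefficients \(p_a\circ\mu+c_a\) in~\eqref{compatible-form}, so \(\omega_Y\) is a genuine Kähler form; since its class then equals \(2\pi c_1(Y)\), the latter is Kähler and \(Y\) is Fano.

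The step I expect to be the main obstacle is the normalisation \(\sum_j m_j=\mu\): one must verify that the natural moment map \(\mu\) attached to the Fano polarisation \(\omega_X\in 2\pi c_1(X)\) is precisely the total moment map of \(-K_X=\sum_j D_j\), with no residual additive constant in \(\mathfrak{t}^*\) coming from the equivariant structures of the \(\mathcal{O}_X(D_j)\). It is the vanishing of this constant that makes the twisting of the relative anticanonical class reproduce the \(p_a\circ\mu\) terms of~\eqref{compatible-form} exactly, leaving the constants \(c_a\) free to be matched against \(\pi^*c_1(B)\); any nonzero shift would move the required value of \(c_a\) away from \(I_a\). Keeping track of the factors of \(2\pi\) and confirming that the minimal coupling forms represent the correct integral classes are the remaining bookkeeping points.
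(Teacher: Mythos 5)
The paper itself does not prove this lemma: it is imported verbatim from \cite{AJL} (Lemma~5.11 there), so there is no internal proof to compare yours against; your proposal has to be judged on its own merits, and on those merits it is correct.

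Your route --- the relative tangent sequence giving \(c_1(Y)=c_1(-K_{Y/B})+\pi^*c_1(B)\), the identification \(-K_{Y/B}=\sum_j\mathcal{D}_j\) coming from \(T_{Y/B}=(TX\times Q)/\mathbb{T}\) together with the equivariant toric isomorphism \(-K_X\cong\mathcal{O}_X\bigl(\sum_j D_j\bigr)\), and the minimal-coupling fact that \(\Omega_j+d\langle m_j,\theta\rangle\) descends to a curvature representative of \(2\pi c_1(\mathcal{O}_Y(\mathcal{D}_j))\) --- is sound, and it is consistent with the paper's own formula \eqref{compatible-form}: since \(d\langle\mu,\theta\rangle=d\mu\circ\theta+\sum_a(p_a\circ\mu)\,\pi_B^*\omega_a\), the form \(\omega_Y\) is precisely the minimal coupling of \(\omega_X\) plus \(\sum_a c_a\pi_B^*\omega_a\), which is exactly the decomposition your comparison exploits. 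The normalisation issue you flag is indeed the only delicate point, and it is resolved by the hypothesis ``natural choice of moment map'': with the canonical linearisations of the \(\mathcal{O}_X(D_j)\) (those making the defining sections invariant), the total moment map \(\sum_j m_j\) is the anticanonically normalised moment map whose image is the reflexive polytope --- the same normalisation the paper invokes later when it sets \(x_0=0\) in Section~\ref{section-applications}. Your remark that a shift \(\mu\mapsto\mu+c\) would move the required constants to \(I_a-p_a(c)\) is accurate and shows you understand why the hypothesis is needed. Two tiny caveats: your ``if and only if \(c_a=I_a\)'' needs linear independence of the classes \(\pi^*[\omega_a]\) in \(H^2(Y,\R)\) (true here, but also not needed --- the lemma only claims the ``if'' direction); and the positivity of \(\omega_Y\) under \(p_a\circ\mu+I_a>0\) is not something you prove but rather inherit from the construction recalled from \cite{AJL}, which is a legitimate citation since the paper does the same.
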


Note that in the above situation, the scalar curvature of \(\omega_a\) is indeed constant, equal to \(2n_aI_a\) where \(n_a\) is the complex dimension of \(B_a\). 

\subsection{Projectivization of sum of line bundles as semisimple principal toric fibration}
\label{section-projective-bundles}

We now provide an effective way of constructing examples of semisimple principal toric fibrations. 

Let $(B,\omega_B):=\prod_{a=1}^k(B_a,\omega_a)$ be a product of compact complex manifolds $B_a$ endowed with cscK metrics $\omega_a$ with $[\omega_a]$ primitive element of $H^2(B_a,\mathbb{Z})$. We consider holomorphic line bundles $\mathcal{L}_i \longrightarrow B$, $i=1,\dots, \ell$, and we suppose that their first Chern classes satisfy

\begin{equation*}
    2\pi c_1(\mathcal{L}_i)=\sum_{a=1}^kp_{ai}\pi^*[\omega_a],
\end{equation*}

\noindent  where by definition $p_{ai}$ is the $\omega_a$-degree of $\mathcal{L}_i$. The natural $\mathbb{C}_i^*$-action on $\mathcal{L}_i$ induces an action of $\mathbb{S}_i^1 \subset \mathbb{C}_i^*$, providing a $\mathbb{T}$-bundle $\pi : Q \longrightarrow B$, where $\mathbb{T}=\prod_i \mathbb{S}_i^1$ is a compact $\ell$-torus. We choose a Hermitian metric $h_i$ on $\mathcal{L}_i$ 
and consider the norm function $r_i(u):= (h_i(u,u))^{\frac{1}{2}}$ for any $u$ in $ \mathcal{L}_i$. On $\Tilde{\mathcal{L}}_i$, the $\mathbb{C}^*$-bundle obtained from $\mathcal{L}_i$ by removing the zero section, $r_i$ is positive and we let $t_i=\log(r_i)$. We fix a basis $\boldsymbol{\xi}=(\xi_i)_{i=1}^{\ell}$ of the Lie algebra $\mathfrak{t}$ of $\mathbb{T}$ and we denote by $\xi^{\tilde{\mathcal{L}}_i}$ the generator of the $\mathbb{S}_i^1$-action on $\tilde{\mathcal{L}}_i$. We then consider the  $\mathfrak{t}$-valued one-form $t:=\sum_{i=1}^{\ell} t_i\xi_i$ and we define a connection one-form $\theta$ on $Q$ as the restriction of $d^ct$ to $Q$, seen as the $\T$-bundle of unit element on each $(\mathcal{L}_i,h_i)$. For all $i=1,\dots,\ell$, it satisfies

\begin{equation*}
\theta(\xi^{\tilde{\mathcal{L}}_i})=\xi_i.
\end{equation*}

\noindent We obtain by construction

\begin{equation}{\label{connection}}
\begin{split}
    d\theta &= \sum_{i=1}^{\ell} \xi_i \otimes \pi^ *(\omega_{h_i}) = \sum_{i=1}^{\ell} \xi_i \otimes \big(\sum_{a=1}^k p_{ai}  \pi^ *(\omega_a) \big) \\
        &= \sum_{a=1}^k p_a \otimes \pi^* (\omega_a),
\end{split}        
\end{equation}

\noindent where $\omega_{h_i}$ is the opposite of the curvature form of the Chern connection of $(\mathcal{L}_i,h_i)$ and $p_a=\sum_{i=1}^{\ell} p_{ai} \xi_i$. We consider the $\ell$-projective space $(\P^{\ell}, \omega_ {\P^{\ell}}, \mathbb{T})$ endowed of a toric $\mathbb{T}$-action with respect to a fixed K\"ahler metric $\omega_{\P^{\ell}}$. We fix the principal $\mathbb{T}$-bundle $Q$ with its connection one form $\theta$, the cscK K\"ahler manifolds $(B_a,\omega_a)$ and the toric K\"ahler manifold $(\P^{\ell}, \omega_ {\P^{\ell}}, \mathbb{T})$. From these data, we define a semisimple principal toric fibration $Y:=Q\times_{\T} \P^{\ell}$.  By construction, $Y$ is biholomorphic to the total space of the projective bundle  $\P(E)$, $E:= \mathcal{O} \oplus \bigoplus_{i=1}^{\ell} \mathcal{L}_i$.

Suppose $\omega_{\P^{\ell}}$ belongs to the first Chern class $2\pi c_1(\P^{\ell})$ of $\P^{\ell}$ and denote by $P$ the canonical $\ell$-simplex associate to $(\P^{\ell}, 2 \pi c_1(\P^{\ell}), \mathbb{T}^{\ell})$ via Delzant correspondence \cite{TD}. By (\ref{connection}), any compatible K\"ahler metric on $Y$ is of the form

\begin{equation}{\label{compatible-form-proj-bundle}}
    \omega_Y= \sum_{a=1}^k\bigg(\sum_{i=1}^{\ell}p_{ai}x_i+c_a\bigg)\pi^*(\omega_a) + \omega_{\P^{\ell}}, \text{ } \text{ } \boldsymbol{x}=(x)_{i=1}^{\ell} \in  P
\end{equation}

\noindent with 

\begin{equation}{\label{condition-ca}}
    c_a > \sum_{i=1}^{\ell} p_{ai},
\end{equation}

\noindent In the above formulas,  by abuse of notation, $\omega_{\P^d}$ denotes both the K\"ahler metric on $\P^{\ell}$ and its induced metric in $2\pi c_1(O_Y(\ell+1))$. The tuples $(c_a)$ satisfying \eqref{condition-ca}, parametrize the compatible K\"ahler classes.

Furthermore, suppose that $B$ is a product of K\"ahler-Einstein manifolds $(B,\omega_B):=\prod_{a=1}^k(B_a,\omega_a)$. By Lemma $\ref{Lemma-AJL}$, if we choose  $c_a$ equal to the Fano index $I_a$  of $B_a$, the corresponding compatible K\"ahler form $\omega_Y$ defined in $(\ref{compatible-form-proj-bundle})$ belongs to the first Chern class $2 \pi c_1(Y)$. In particular, if

\begin{equation}{\label{condition-ca2}}
        I_a> \sum_{i=1}^{\ell} p_{ai},
\end{equation}

\noindent $Y$ is a Fano manifold with compatible first Chern class.

\subsection{Extremal metrics on semisimple principal toric fibrations}
\label{section-sptf}

We begin this section by recalling a general characterisation of extremal metrics on compact K\"ahler manifold $(Y,J_Y)$ in a fixed K\"ahler class $\alpha_Y$. By definition a K\"ahler metric $\omega$ is extremal if the symplectic gradient of its scalar curvature is real holomorphic, i.e.

\begin{equation*}
    \mathcal{L}_{\omega^{-1}(Scal(\omega))}J_Y=0.
\end{equation*}

By a well-known result of Calabi \cite{EC} an extremal metric needs to be invariant by a maximal torus $\mathbb{K}$ in the reduced automorphism group $\mathrm{Aut}_{\red}(Y)$ of $Y$. Let us fix such a torus $\K$. For any $\K$-invariant K\"ahler metric $\omega$ on $Y$ the action of $\K$ on $(Y,\omega)$ is hamiltonian and we denote by $\mu_{\omega}$ the corresponding moment map. By a result of Guillemin--Sternberg \cite{VGSS} the image of $\mu_{\omega}$ is a convex compact polytope $P$ in the dual of the Lie algebra $\mathfrak{k}$ of $\mathbb{K}$. For any $\K$-invariant metric $\omega$ in $\alpha_Y$ we normalize  $\mu_{\omega}$ in such way that its image equals $P$. It then follows from \cite{FM} and  \cite[Lemma 1]{AL} that  a $\K$-invariant metric $\omega \in \alpha_Y$ is extremal if and only if

\begin{equation*}
    Scal(\omega)=l_{\ext}(\mu_{\omega}),
\end{equation*}

\noindent where $l_{\ext}$ is the unique affine extremal function such that the Futaki invariant vanishes

\begin{equation}{\label{affine-function}}
    \textbf{F}(l):=\int_Y l(\mu_{\omega}) (Scal(\omega) - l_{ext}(\mu_{\omega}))\frac{\omega^{n}}{n!}=0,
\end{equation}

\noindent for any $l \in \mathrm{Aff}(\mathfrak{k}^*)$.

We now suppose that $(Y,J_Y,\omega_Y,\T)$ is a semisimple principal toric fibration with fiber $(X,J_X,\omega_X,\T)$ and that $\alpha_Y:=[\omega_Y]$ is a compatible K\"ahler class on $Y$. Let $P$ denotes the Delzant polytope associated to $(X,J_X,\omega_X,\T)$. In general, $\T_Y$ is not maximal in $\mathrm{Aut}_{\red}(Y)$. However, by \cite[Proposition 1]{ACGT}, any compatible K\"ahler metric (\ref{compatible-form}) is invariant by a maximal torus $\K_Y \subset \mathrm{Aut}_{\red}(Y)$ containing $\T_Y$ such that the following exact sequence holds

 \begin{equation}{\label{exact-sequence}}
     \{0\} \longrightarrow \mathfrak{t} \longrightarrow \mathfrak{k} \longrightarrow \mathfrak{k}_B \rightarrow \{0\},
 \end{equation}
 
 \noindent where $\mathfrak{t}:=Lie(\T_Y)$, $\mathfrak{k}:=Lie(\K_Y)$ and $\mathfrak{k}_B:=Lie(\K_B)$ where $\K_B \subset \mathrm{Aut}_{\red}(B)$ is a maximal torus such that $\omega_B:= \sum_{a=1}^k\omega_a$ is $\K_B$-invariant (without loss of generality by Lichnerowicz--Matsushima Theorem). 
 
 By construction (\ref{define-Y}), there is an embedding of the space of  $\T$-invariant smooth functions $\mathcal{C}^{\infty}_{\T}(X)$ on $X$ in the space of  $\T$-invariant smooth functions $\mathcal{C}^{\infty}_{\T}(Y)$ on $Y$

 \begin{equation}{\label{embeding-function}}
     \mathcal{C}^{\infty}_{\T}(X) \subset \mathcal{C}^{\infty}_{\T}(Y).
 \end{equation}

\noindent Moreover, by computations in  \cite[p. 380]{ACGTI}, the scalar curvature $Scal(\omega_Y)$  of compatible K\"ahler metric $\omega_Y$ on $Y$ corresponding to a K\"ahler metric $\omega_X$ on $X$ belongs to $ \mathcal{C}^{\infty}_{\T}(X) \subset \mathcal{C}^{\infty}_{\T}(Y)$ and is equal to
 
 \begin{equation}{\label{formula-scalar}}
 \ Scal(\omega_Y)=\sum_{a =1}^k \frac{s_a}{p_a \circ \mu_{\omega_X}  +c_a} + \frac{1}{\mathrm{v}(\mu_{\omega_X})} Scal_{\v}(\omega_X),
\end{equation}

\noindent where $s_a$ are the constant scalar curvatures of $\omega_a$ and $n_a:=\dim(B_a)$, $Scal_{\v}(\omega_X)$ is the $\v$-weighted scalar curvature of $\omega_X$ (see Definition \ref{weighted-cscK}) with respect to the weight $\v(x) := \prod_{a=1}^k (p_a(x) + c_a)^{n_a}$.  It  then follows from $(\ref{exact-sequence})$ and (\ref{formula-scalar}) that (\ref{affine-function}) reduces to an equation on $X$ and is equivalent to
 
 \begin{equation*}
     \int_X(Scal_\v(\omega_X) - \w(\mu_{\omega_X}))l(\mu_{\omega_X})\omega_X^{\ell}=0,
 \end{equation*}

\noindent for every $l \in \mathrm{Aff}(\mathfrak{t}^*)$, where the moment maps $\mu_{\omega_X}$ are normalized such that $\mu_{\omega_X}(X)=P$ and $\w(x) := \left(l_{\ext}(x) - \sum_{a=1}^k \frac{s_a}{p_a(x)+c_a}\right) \v(x)$. To summarize we have the following (see \cite[Section 3.5]{ACGT} or \cite[Lemma 5.14]{AJL} for more details).

\begin{prop}{\label{prop-affine-extremal}}
The affine extremal function $l_{\ext}$ of a compatible K\"ahler class belongs to the image of the map $\mathrm{Aff}(\mathfrak{t^*}) \varhookrightarrow \mathrm{Aff}(\mathfrak{k^*})$ induced by $(\ref{exact-sequence})$.
\end{prop}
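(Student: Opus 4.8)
The plan is to interpret $l_{\ext}$ as an orthogonal projection and to show that this projection automatically lands in the smaller space $\mathrm{Aff}(\mathfrak{t}^*)$. First I would dualise the exact sequence \eqref{exact-sequence} into $\{0\}\to\mathfrak{k}_B^*\to\mathfrak{k}^*\xrightarrow{\pi}\mathfrak{t}^*\to\{0\}$, so that the map $\mathrm{Aff}(\mathfrak{t}^*)\to\mathrm{Aff}(\mathfrak{k}^*)$ of the statement is precisely pullback by $\pi$; its image consists of those affine functions on $\mathfrak{k}^*$ whose linear part lies in $\mathfrak{t}\subset\mathfrak{k}$, equivalently of the affine functions that are constant along the $\mathfrak{k}_B^*$-directions. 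Since the $\mathbb{T}_Y$-moment map is the fibrewise moment map, one has $\pi\circ\mu_{\omega_Y}=\mu_{\omega_X}$, and by the scalar computation \eqref{formula-scalar} (which is exactly where the cscK hypothesis on each $B_a$ is used, to make the $s_a$ constant) the function $Scal(\omega_Y)$ is the pullback $G\circ\mu_{\omega_X}=G\circ\pi\circ\mu_{\omega_Y}$ of a function $G$ on $P\subset\mathfrak{t}^*$. Finally, writing $d\nu_Y:=(\mu_{\omega_Y})_*\frac{\omega_Y^{n}}{n!}$, the vanishing of \eqref{affine-function} for all $l\in\mathrm{Aff}(\mathfrak{k}^*)$ says exactly that $l_{\ext}$ is the $L^2(d\nu_Y)$-orthogonal projection of $Scal(\omega_Y)$ onto $\mathrm{Aff}(\mathfrak{k}^*)$.

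Next I would introduce $\hat l\in\mathrm{Aff}(\mathfrak{t}^*)$, the $L^2(d\nu_Y)$-orthogonal projection of $Scal(\omega_Y)$ onto the subspace $\mathrm{Aff}(\mathfrak{t}^*)\subset\mathrm{Aff}(\mathfrak{k}^*)$. By uniqueness of orthogonal projections onto nested subspaces, it suffices to prove that $\hat l$ already satisfies the full condition \eqref{affine-function}, i.e. that $\Phi:=Scal(\omega_Y)-\hat l(\mu_{\omega_X})$ is $L^2(d\nu_Y)$-orthogonal to all of $\mathrm{Aff}(\mathfrak{k}^*)$; then $l_{\ext}=\hat l$ lies in the image of $\mathrm{Aff}(\mathfrak{t}^*)$ and we are done. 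Note that $\Phi$ is a pullback from the fibre, since both $Scal(\omega_Y)$ and $\hat l(\mu_{\omega_X})$ are functions of $\mu_{\omega_X}$ alone. By construction $\Phi\perp\mathrm{Aff}(\mathfrak{t}^*)$, so, choosing a linear splitting $\mathfrak{k}=\mathfrak{t}\oplus\mathfrak{s}$ with $\mathfrak{s}\xrightarrow{\sim}\mathfrak{k}_B$, it remains only to check that $\int_Y\Phi\,\langle\mu_{\omega_Y},\xi\rangle\,\frac{\omega_Y^{n}}{n!}=0$ for every $\xi\in\mathfrak{s}$.

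To evaluate this I would compute the holomorphy potential of a base direction from \eqref{compatible-form}: for $\xi\in\mathfrak{s}$ projecting to $\bar\xi\in\mathfrak{k}_B$ one finds $\langle\mu_{\omega_Y},\xi\rangle=\sum_a\big(p_a\circ\mu_{\omega_X}+c_a\big)m_a^{\bar\xi}$, where $m_a^{\bar\xi}$ is the $\omega_a$-holomorphy potential of $\bar\xi$ on $B_a$, normalised to zero average; changing the lift of $\bar\xi$ by an element of $\mathfrak{t}$ only alters this by a fibre-affine function, which is harmless since $\Phi\perp\mathrm{Aff}(\mathfrak{t}^*)$. I would then use the factorisation of the top power of \eqref{compatible-form} along $Y\to B$, namely $\frac{\omega_Y^{n}}{n!}=\v(\mu_{\omega_X})\,\nu_X\wedge\nu_B$ with $\v(x)=\prod_a(p_a(x)+c_a)^{n_a}$ and $\nu_B=\prod_a\frac{\omega_a^{n_a}}{n_a!}$. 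Fubini then turns the integral into a sum over $a$ of a fibre integral times $\int_{B_a}m_a^{\bar\xi}\,\frac{\omega_a^{n_a}}{n_a!}=0$, the latter vanishing by the zero-average normalisation. This gives the required orthogonality and hence the proposition.

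The main obstacle is the computation of the base holomorphy potentials and of the volume factorisation in the third paragraph: the fundamental vector field of $\xi\in\mathfrak{k}$ on $Y$ is not horizontal, its vertical $\mathbb{T}$-part being dictated by the connection $\theta$ and the extension \eqref{exact-sequence}, and one must check that contracting it with \eqref{compatible-form} produces exactly the stated potential $\sum_a(p_a\circ\mu_{\omega_X}+c_a)m_a^{\bar\xi}$ up to a fibre-affine term, with matching normalisations of $\mu_{\omega_Y}$ and of the $m_a^{\bar\xi}$. This is precisely the content carried out in \cite[Section~3.5]{ACGT} and \cite[Lemma~5.14]{AJL}, which one may invoke.
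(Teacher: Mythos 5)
Your proposal is correct and takes essentially the same route as the paper: the paper deduces the proposition from formula \eqref{formula-scalar} (the scalar curvature of a compatible metric is a pullback of a function of \(\mu_{\omega_X}\)) together with the exact sequence \eqref{exact-sequence}, deferring the detailed verification to \cite[Section 3.5]{ACGT} and \cite[Lemma 5.14]{AJL}. The details you spell out — the \(L^2\)-projection framing, the lifted holomorphy potentials \(\sum_a(p_a\circ\mu_{\omega_X}+c_a)m_a^{\bar\xi}\), the volume factorization with density \(\v(\mu_{\omega_X})\), and the zero-average Fubini argument — are precisely the content of those cited computations, which you also invoke for the technical core.
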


\begin{remark}
It follows from Proposition \ref{prop-affine-extremal}, \cite[(27)]{AJL} and \cite[Lemma 15]{AL} that the Futaki invariant  (\ref{affine-function}) restricted to $\mathrm{Aff}(\mathfrak{t^*}) \subset \mathrm{Aff}(\mathfrak{k^*})$  coincides, up to a positive multiplicative constant, to the weighted Donaldson--Futaki invariant  (\ref{define-futaki}) for the weights (\ref{weights}).
\end{remark}

 We now state the main existence result of this section, which is one of the main results of \cite{SJ}.

\begin{theorem}[{\cite[Theorem~3]{SJ}}]
\label{theorem-J}
Let \((Y, J_Y, \omega_Y,\T)\) be a semisimple principal  bundle with K\"ahler toric  fiber $(X,J_X, \omega_X,\T)$ and denote by \(P\) its moment polytope. 
Then there exists an extremal Kähler metric in \([\omega_Y]\) if and only if \(P\) is \((\v,\w)\)-uniformly K-stable, where 

\begin{equation}{\label{weights}}
\begin{split}
\v(x) =& \prod_{a=1}^k (p_a(x) + c_a)^{n_a}\\
\w(x) =& \left(l_{\ext}(x) - \sum_{a=1}^k \frac{s_a}{p_a(x)+c_a}\right) \v(x), \\
\end{split}
\end{equation}

\noindent and \(l_{\ext}\) is the unique affine function such that \eqref{annulation-Futaki} holds for \((\v,\w)\). Equivalently, there exists a $(\v,\w)$-cscK metric in $[\omega_X ]$.
\end{theorem}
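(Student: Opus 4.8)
The plan is to decompose the claimed equivalence of the three conditions --- existence of an extremal Kähler metric in $[\omega_Y]$, $(\v,\w)$-uniform K-stability of $P$, and existence of a $(\v,\w)$-cscK metric in $[\omega_X]$ --- into two separate statements. The first is a \emph{geometric reduction} identifying extremal metrics on $Y$ with $(\v,\w)$-cscK metrics on the toric fiber $X$; it is essentially algebraic and rests on the material preceding the statement. The second is a \emph{weighted Yau--Tian--Donaldson correspondence} on the toric manifold $X$, identifying existence of a $(\v,\w)$-cscK metric with $(\v,\w)$-uniform K-stability of $P$; this is the genuinely analytic heart of the theorem.

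For the reduction I would first observe that it suffices to work with compatible metrics. Producing a compatible extremal metric already yields an extremal metric in $[\omega_Y]$, while conversely any extremal metric is $\K_Y$-invariant by Calabi's theorem and, by uniqueness of extremal metrics up to $\autred(Y)$ together with the exact sequence \eqref{exact-sequence}, may be taken of the compatible form \eqref{compatible-form} induced by a $\T$-invariant metric $\omega_X$ on $X$. For such metrics the scalar curvature formula \eqref{formula-scalar} expresses $Scal(\omega_Y)$ through the $\v$-weighted scalar curvature $Scal_\v(\omega_X)$ with $\v(x)=\prod_a(p_a(x)+c_a)^{n_a}$. Substituting this into the extremal equation $Scal(\omega_Y)=l_{\ext}(\mu_{\omega_Y})$ and using Proposition~\ref{prop-affine-extremal}, which guarantees that $l_{\ext}$ descends to an affine function on the fiber polytope, the equation rearranges exactly into $Scal_\v(\omega_X)=\w\circ\mu_{\omega_X}$ with $\w$ as in \eqref{weights}. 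This puts compatible extremal metrics on $Y$ in bijection with $(\v,\w)$-cscK metrics on $X$, establishing the equivalence of the first and third conditions.

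For the weighted toric correspondence, the implication from existence of a $(\v,\w)$-cscK metric to $(\v,\w)$-uniform K-stability is already available as Theorem~\ref{cscK-to-stability}. The converse is the hard direction, and I would follow the variational scheme of the unweighted toric case. First I would translate the $(\v,\w)$-cscK equation into a fourth-order Abreu-type equation for the symplectic potential $u$ on $P$, with $\v$ governing the linearization and $\w$ encoding the right-hand side. Next I would show that solvability of this equation is equivalent to coercivity of a weighted Mabuchi-type functional on the space of $\T$-invariant potentials, whose asymptotic slope along a ray of convex functions reproduces the $(\v,\w)$-Donaldson--Futaki invariant $\DF$ of \eqref{define-futaki}; matching this slope with $\DF$ shows that the uniform K-stability estimate \eqref{general-stability-polytope} is exactly the coercivity required.

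The main obstacle is passing from coercivity to existence, which demands a priori estimates in the weighted setting. Here I would adapt the Chen--Cheng estimates \cite{CCIII} to the presence of the smooth positive weight $\v$ and the weight $\w$, reproving the $C^0$, $C^2$ and higher-order bounds for the weighted system while controlling the additional terms $\Delta_\omega(\v(\mu))$ and $\mathrm{Tr}(G_\omega\circ\mathrm{Hess}(\v))$ entering $Scal_\v$. The positivity and smoothness of $\v$ on the compact polytope $P$ are precisely what make these terms uniformly controllable, so that the unweighted arguments carry over after weighted integration by parts. Once these estimates are secured, a minimization or continuity argument produces a smooth $(\v,\w)$-cscK metric from the coercivity, completing the converse and hence the full equivalence.
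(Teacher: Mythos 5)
Your overall architecture --- a geometric reduction identifying compatible extremal metrics on \(Y\) with \((\v,\w)\)-cscK metrics on \(X\) via \eqref{formula-scalar} and Proposition~\ref{prop-affine-extremal}, followed by a weighted toric YTD correspondence whose easy direction is Theorem~\ref{cscK-to-stability} and whose hard direction passes through coercivity of a weighted Mabuchi functional --- coincides with the structure of the paper's proof, and your slope-matching step is exactly the paper's \emph{Step 1} (the adaptation of \cite{SKD, ZZ} giving \(\mathcal{M}_{\v,\w}(u)\geq C\lVert u^*\rVert_{L_1}-D\) from \((\v,\w)\)-uniform K-stability).

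The genuine gap is in your final step, ``coercivity implies existence.'' You propose to reprove the Chen--Cheng \(C^0\), \(C^2\) and higher-order a priori estimates for the \emph{weighted} scalar curvature operator on \(X\), asserting that positivity and smoothness of \(\v\) make the extra terms \(\Delta_\omega(\v(\mu))\) and \(\mathrm{Tr}(G_\omega\circ \mathrm{Hess}(\v)\circ\mu)\) ``uniformly controllable'' so that ``the unweighted arguments carry over after weighted integration by parts.'' This is not a routine adaptation: no such weighted version of the Chen--Cheng estimates was available, and producing one is a substantial analytic project in its own right, not something that follows from the compactness of \(P\). The whole point of the proof in \cite{SJ}, as sketched in the paper, is to \emph{circumvent} this: one runs the Chen--Cheng/He continuity path \eqref{continuity-path} for the \emph{unweighted} extremal problem on the total space \(Y\), where Theorem~\ref{chen-cheng} applies as is; restricted to the space of compatible potentials \(\mathcal{K}_{\T}(X,\omega_0)\subset\mathcal{K}_{\K}(Y,\tilde{\omega}_0)\) it becomes, by \eqref{formula-scalar}, the weighted path \eqref{continuity-path2} on \(X\). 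The coercivity from Step 1 is then transferred to \(Y\) using the identity \(\mathcal{M}^{\K}|_{\mathcal{K}_{\T}(X,\omega_0)}=\mathcal{M}_{\v,\w}\) of \eqref{mabuchi-restriction} and the Darvas distance comparison \eqref{darvas}, and one proves that the set \(S\) of times admitting a \emph{compatible} solution is non-empty, open and closed, so that only the unweighted estimates on \(Y\) are ever invoked. A related subtlety that your reduction also glosses over: the space of compatible potentials is not stable under \(\K^{\mathbb{C}}\), so the limit extremal metric obtained at \(t=1\) (and likewise any extremal metric you want to ``move into compatible form'' by uniqueness up to \(\autred(Y)\)) is a priori compatible only with respect to a possibly different principal connection and base metrics; this requires the additional argument of \cite[Proof of Proposition 6.5]{SJ} rather than a one-line appeal to uniqueness.
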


\begin{proof}
We only sketch the proof of the direction "$(\v,\w)$-uniform K-stability implies existence of a $(\v,\w)$-weighted cscK metric" which is key in the present paper, and refer to the original paper \cite{SJ} for details and the converse direction. We fix the weights $(\v,\w)$ given by (\ref{weights}). By Proposition \ref{prop-affine-extremal} and (\ref{formula-scalar}), compatible extremal metrics on $Y$ correspond to $(\v,\w)$-cscK metrics on $X$  via (\ref{compatible-form}). Then, the existence of $(\v,\w)$-cscK metric in $[\omega_X]$  implies the existence of a (compatible) extremal metric in $[\omega_Y]$.  The main ingredient is the existence result of cscK metric of Chen--Cheng \cite{XC1, XC2, CCIII}, extended by He \cite{He} to the extremal case:

\begin{theorem}[Chen--Cheng, He \cite{XC1, XC2, CCIII, He}]{\label{chen-cheng}}
Let $\alpha$ be a K\"ahler class on a compact K\"ahler manifold $Y$ and $\K$ be a maximal torus in the reduced automorphism group $\mathrm{Aut}_{\red}(Y)$. Then there exists an extremal metric in $\alpha$ if and only if the $\K$-relative Mabuchi energy $\mathcal{M}^{\K}$ is $\K^{\mathbb{C}}$-coercive, i.e.  there exists $ C>0$ and $D>0$ such that

\begin{equation*}
    \mathcal{M}^{\K}(\varphi) \geq C \inf_{\gamma\in \K^{\mathbb{C}}} d_1(0,\gamma \cdot \varphi)  - D,
\end{equation*}

\noindent for every $\varphi \in \mathring{\mathcal{K}}_{\K}(Y,\tilde{\omega}_0)$.
\end{theorem}

\noindent In the above statement, $\tilde{\omega}_0\in \alpha$ is a fixed $\K$-invariant K\"ahler metric, $\mathcal{M}^{\K}: \mathcal{K}_{\K}(Y,\tilde{\omega}_0) \longrightarrow \R $ is the $\K$-relative Mabuchi energy defined on the space of $\K$-relative K\"ahler potential $\mathcal{K}_{\K}(Y,\tilde{\omega}_0):= \{ \varphi \in \mathcal{C}^{\infty}_{\K}(Y) \text{ } | \text{ } \tilde{\omega}_{\varphi}:= \tilde{\omega}_0+dd^c \varphi >0 \}$, $\mathring{\mathcal{K}}_{\K}(Y,\tilde{\omega}_0) \subset \mathcal{K}_{\K}(Y,\tilde{\omega}_0)$ is the space of normalized potential (with respect to the vanishing of the Aubin-Mabuchi functional), $d_1$ is the Darvas distance \cite{TD} and $\K^{\mathbb{C}}:= \K \otimes \mathbb{C}$ is the complexification of $\K$. Moreover $\K^{\mathbb{C}}$ acts on $\mathring{\mathcal{K}}_{\K}(Y,\tilde{\omega}_0)$ via the natural action on $\K$-invariant K\"ahler metrics in $[\tilde{\omega}_0]$. Originally \cite{He}, the coercivity condition in Theorem \ref{chen-cheng}, was expressed in term of the complexification of a maximal compact connected subgroup of $\mathrm{Aut}_{\red}(Y)$ (not necessarily commutative). As observed in \cite[Section 5]{SJ}, the same arguments as in \cite{CCIII, He} provide this statement.

The proof of Theorem \ref{theorem-J} is divided in two steps:

\begin{enumerate}
    \item  show that the $(\v,\w)$-uniform stability with respect to the weights $(\ref{weights})$ of the polytope $P$ implies the $\T^{\mathbb{C}}$-coercivity of the \textit{weighted Mabuchi energy} $\mathcal{M}_{\v,\w}$  corresponding to  $(X,J_X,[\omega_X], \T)$; 
\item adapt the continuity path (\ref{continuity-path}) involved in the proof of Cheng--Cheng and He to obtain the existence of a compatible extremal metric in $[\omega_Y]$ (or equivalently a $(\v,\w)$-weighted cscK metric in $[\omega_X]$).
\end{enumerate}

\noindent The weighted Mabuchi energy $\mathcal{M}_{\v,\w}$ mentioned above is the one introduced by Lahdili \cite{AL}. 

\textit{Step 1}. The first step is an adaptation of \cite{SKD, ZZ} which established this result in the unweighted case, i.e. when $\v=1$ and $\w=l_{\ext}$. Fix $\omega_0 \in [\omega_X]$ a $\T$-invariant K\"ahler metric. On a toric K\"ahler manifold, a $\T$-invariant K\"ahler metric $\omega \in [\omega_0]$ can be defined via two functions:  a $\T$-invariant $\omega_0$-relative K\"ahler potential $\varphi \in \mathcal{K}_{\T}(X,\omega_0)$ and a symplectic potential $u \in \mathcal{S}(\PL)$, which, by definition, $\mathcal{S}(\PL)$ is the space of smooth strictly convex functions on $P^0$ which satisfy the so-called \textit{Abreu boundary conditions} \cite{MA3, VG}. There is a well-known correspondence between a symplectic and a K\"ahler potential defining the same K\"ahler metric \cite{ACGTI, ACGTII, SKD, VG}. Via this correspondence we can consider the weighted Mabuchi energy $\mathcal{M}_{\v,\w}$ as a functional on $\mathcal{S}(\PL)$. A first step is to show, as in the case $\v=1$ \cite[Proposition 3.3.4]{SKD}, that $\mathcal{M}_{\v,\w}$ extends to the space $\mathcal{CV}^{\infty}(P)$ of smooth convex functions on $P^0$ and continuous on $P$,  and that a symplectic potential $u \in \mathcal{S}(\PL)$ defining a $(\v,\w)$-weighted cscK metric realizes the minimum of $\mathcal{M}_{\v,\w}$ over $\mathcal{CV}^{\infty}(P)$. This is proven in \cite[Proposition 7.7]{SJ}. The idea is then, as in \cite{ZZ}, to compare $\mathcal{M}_{\v,\w}$ and $\mathcal{M}_{\v,\w_0}$, where the weight $\w_0$ is the $\v$-weighted scalar curvature $Scal_{\v}(u_0)$  of (the K\"ahler metric defined by) any fixed symplectic potential $u_0$. Then $u_0$ trivially solves

\begin{equation*}
    Scal_{\v}(u)=\w_0, \text{ } u\in \mathcal{S}(\PL),
\end{equation*}

\noindent In other words, there exists a $(\v,\w_0)$-cscK metric in $[\omega_X]$.  We deduce that $\mathcal{M}_{\v,\w_0}$ is bounded from below on $\mathcal{CV}^{\infty}(P)$ by the discussion above. Consequently, by comparing $\mathcal{M}_{\v,\w}$ and $\mathcal{M}_{\v,\w_0}$ and using our hypothesis, we can show that \cite[Proposition 7.9]{SJ} (or \cite{ZZ} for $\v=1$)

    \begin{equation*}
    \mathcal{M}_{\v,\w}(u) \geq C \|u^*\|_{L_1} - D,
    \end{equation*}

\noindent for any $u \in \mathcal{S}(\PL)$. There $D>0$ and $C>0$ are uniform in $u$ and $\| \cdot \|_{L_1}$ is the $L_1$-norm on $P$ with respect to the Lesbegue measure. Denote by $d_1^X$ the Darvas distance on $\mathcal{K}_{\T}(X,\omega_0)$. For a (normalized) K\"ahler potential $\varphi \in \mathcal{K}_{\T}(X,\omega_0)$ corresponding to (normalized) symplectic potential $u \in \mathcal{S}(\PL)$ via the correspondence described above, we can show that $d^X_1(0,\varphi) \leq A \lVert u \rVert + B$, for some positive constant $A$ and $B$.  We deduce that

\begin{equation*}
    \mathcal{M}_{\v,\w}(\varphi) \geq C' \inf_{\gamma \in \T^{\mathbb{C}}} d^X_1(0,\gamma \cdot \varphi) - D'.
\end{equation*}

\noindent for any $\varphi$ (normalized) $\T$-invariant K\"ahler potential, where $C'$ and $D'$ are positive constant. We refer to \cite[Section 7.6]{SJ} for the precise normalization of symplectic and K\"ahler potentials.

\textit{Step 2}. The proof of the direction "coercivity implies existence" in Theorem \ref{chen-cheng} is based on the resolution of the following continuity path

\begin{equation}{\label{continuity-path}}
    t(Scal(\tilde{\omega}_{\varphi}) - l_{ext}(\mu_{\tilde{\omega}_{\varphi}}))=(1-t)(\Lambda_{\tilde{\omega}_{\varphi}}(\tilde{\chi})-n-\ell),
\end{equation}

 \noindent where $t \in [0,1]$,  $\tilde{\chi}$ is a fixed  K\"ahler metric in $[\tilde{\omega}_0]$, $\dim(Y)=n+\ell$, $\varphi \in \mathcal{K}_{\K}(Y,\tilde{\omega}_0)$ and $\Lambda_{\tilde{\omega}_{\varphi}}(\tilde{\chi})$ is the symplectic trace of $\tilde{\chi}$ with respect to $\tilde{\omega}_{\varphi}$. Chen--Cheng and He showed that there exists $t_0 \in (0,1)$ such that
 
 \begin{equation*}
     \tilde{S}:=\{ t \in [t_0,1] \text{ such that }\exists \varphi_t \in \mathcal{K}_{\K}(Y,\tilde{\omega}_0) \text{ solution of } (\ref{continuity-path})\}
 \end{equation*}
 
 \noindent  is non-empty, open and closed in $[t_0,1]$.

We come back to the case of semisimple principal toric fibration. We fix a $\T$-invariant K\"ahler metric $\omega_0 \in [\omega_X]$ and the induced compatible K\"ahler metric $\tilde{\omega}_0 \in [\omega_Y]$, see (\ref{compatible-form}). It follows from $(\ref{embeding-function})$ and (\ref{exact-sequence}) (see \cite[Lemma 7]{ACGT} for a proof) that

\begin{equation}{\label{embedding-potential}}
    \mathcal{K}_{\T}(X,\omega_0) \subset \mathcal{K}_{\K}(Y,\tilde{\omega}_0),
\end{equation}

\noindent where $\K $ is a maximal torus in $\mathrm{Aut}_{\red}(Y)$ containing $\T$ such that $(\ref{exact-sequence})$ holds.  Moreover, for every $\varphi \in  \mathcal{K}_{\T}(X,\omega_0)$, seen as function on $Y$, the K\"ahler metric $\tilde{\omega}_{\varphi}:=\tilde{\omega}_0+dd^c\varphi$ is a compatible K\"ahler metric in $[\omega_Y]$ in the sense of Definition \ref{definition-fibration}. Also, $\tilde{\omega}_{\varphi}$ on $Y$ is the metric induced by $\omega_{\varphi}$ on $X$ via (\ref{compatible-form}) (see \cite[Lemma 7]{ACGT} or \cite[Lemma 5.5]{AJL}).  Following \cite{ACGT}, we then refer to the image of (\ref{embedding-potential}) as the space of \textit{compatible K\"ahler potentials}. Additionally, (\ref{formula-scalar}), (\ref{embedding-potential}), \cite[(27)]{AJL} and Proposition \ref{prop-affine-extremal} (see also \cite[Lemma 5.10]{AJL}) show that

\begin{equation}{\label{mabuchi-restriction}}
    \mathcal{M}^{\K}|_{\mathcal{K}_{\T}(X,\omega_0)} = \mathcal{M}_{\v,\w}.
\end{equation}

\noindent Let $d_1^Y$ be the Darvas distance on $\mathcal{K}_{\K}(Y,\tilde{\omega}_0)$ \cite{TD}. By \cite[Corolarry 6.5]{AJL} and since $\v>0$,

\begin{equation}{\label{darvas}}
      d^X_1(0,\varphi) \geq A ~ d_1^Y(0,\varphi)
\end{equation}

\noindent  for any $\varphi \in  \mathcal{K}_{\T}(X,\omega_0) \subset \mathcal{K}_{\K}(Y,\tilde{\omega}_0)$, where  $A>0$. By (\ref{formula-scalar}), the restriction of (\ref{continuity-path}) to the space of compatible K\"ahler potentials $ \mathcal{K}_{\T}(X,\omega_0) \subset \mathcal{K}_{\K}(Y,\tilde{\omega}_0)$ is equal to

\begin{equation}{\label{continuity-path2}}
    t(Scal_{\v}(\omega_{\varphi}) - \w(\mu_{{\omega}_{\varphi}}))=(1-t)(\Lambda_{\tilde{\omega}_{\varphi}}(\tilde{\chi})-n-\ell),
\end{equation}

\noindent 
We now want to show that there exists $t_0 \in (0,1)$ such that

\begin{equation*}
    S:=\{ t \in [t_0,1] \text{ such that }\exists \varphi_t \in \mathcal{K}_{\T}(X,\omega_0) \subset \mathcal{K}_{\K}(Y,\tilde{\omega}_0) \text{ solution of } (\ref{continuity-path2})\}
\end{equation*}

\noindent is non-empty, open and closed in $[t_0,1]$. Since any $\T^{\mathbb{C}}$-orbit is included in a $\mathbb{K}^{\mathbb{C}}$-orbit, the $\mathbb{T}^{\mathbb{C}}$-coercivity is stronger than the $\K^{\mathbb{C}}$-coercivity. Then, by (\ref{mabuchi-restriction}), (\ref{darvas}) and \textit{Step 1},
  $\mathcal{M}^{\K}$ is $\K^{\mathbb{C}}$-coercive on the space of compatible potentials $\mathcal{K}_{\T}(X,\omega_0)$ suitably normalized. As observed in \cite[Lemma 6.3]{SJ}, we can choose $\tilde{\chi}$ such that $(\ref{continuity-path2})$ is an equation on $X$ and admits a solution $\varphi_{t_0} \in \mathcal{K}_{\T}(X,\omega_0)$ for some $t_0 \in (0,1)$, showing that $S$ is non-empty. The openness follows from an application of the Implicit Function Theorem \cite[Proposition 6.4]{SJ}. From the \textit{openess}, the closedness of $\mathcal{K}_{\T}(X,\omega_0)$ in  $ \mathcal{K}_{\K}(Y,\tilde{\omega}_0)$ (see \cite[Section 3.4]{SJ}) and \textit{Step 1}, following the proof of Theorem \ref{chen-cheng}, we obtain a sequence of compatible K\"ahler metric $\tilde{\omega}_{\varphi_j}$ such that $\overline{\omega}_j:=\gamma_j^*(\tilde{\omega}_{\varphi_j})$, $\gamma_j \in \K^{\mathbb{C}}$, converge to an extremal metric $\overline{\omega}_1$. The space of (normalized) compatible K\"ahler  potential $\mathring{\mathcal{K}}_{\T}(X,\omega_0)$ is not stable under the action of $\K^{\mathbb{C}}$, then either $\overline{\omega}_j$ or $\overline{\omega}_1$ is compatible in general. However, we can show (see \cite[Proof of Proposition 6.5]{SJ}) that  $\overline{\omega}_1$ is of the form of $(\ref{compatible-form})$, for a possibly different principal connection $\theta$ and K\"ahler metrics $\omega_a$. Moreover, we can argue that the K\"ahler metric $\omega_1 \in [\omega_X]$ defining $\overline{\omega}_1$ via (\ref{compatible-form}) is a weighted $(\v,\w)$-cscK metric for the same weights $(\ref{weights})$, see \cite[Proof of Proposition 6.5]{SJ}.

\end{proof}

Note that condition \eqref{annulation-Futaki} corresponds to the vanishing of the modified Futaki character, and \(l_{\ext}\) encodes the extremal vector field. 
In particular the extremal metric above is cscK if and only if \(l_{\ext}\) is constant. 

\begin{remark}
It is remarkable that the condition depends on the base only through the constants \((s_a)\) and the existence of a principal \(\mathbb{T}\)-bundle with connection with corresponding data \((p_a)\). 
In particular, when we obtain an existence result for extremal Kähler metrics, we usually actually obtain the existence of extremal Kähler metrics over a full deformation family of cscK manifolds. This is exactly this fact which is used in the Proof of Proposition \ref{prop-ex-fano} to obtain an existence condition of extremal metric on  $\mathbb{CP}^2$-bundles over any K\"ahler-Einstein Fano threefold depending only on the cohomology class and the degrees of the line bundles.
\end{remark}

\section{Geometric applications of the sufficient condition}
\label{section-applications}

\subsection{The general statement}

 We use the same notations as in Section~\ref{section-geometric} for semisimple principal toric fibrations and the same notations as in Section~\ref{section-labelled-polytopes} for the decomposition of polytopes. 

\begin{corollary}[of Theorem~\ref{prop-combinatorial}]
\label{condition-extremal-arbitrary-fiber}
The semisimple principal toric fibration \((Y, \omega_Y)\) admits an extremal Kähler metric in \([\omega_Y]\) if there exists an \(x_0\in P^0\) and corresponding cone decomposition \(P=\bigcup_j P_j\) such that for all \(j\) and for all \(x\in P_j\)
\begin{equation*}
    \frac{1}{L_j(x_0) }\bigg( \ell+1 +  \sum_{a=1}^k \frac{n_a p_a(x -x_0)}{p_a(x) + c_a}  \bigg) - \frac{1}{2} \left(l_{\ext}(x) - \sum_{a=1}^k \frac{s_a}{p_a(x)+c_a}\right) \geq 0.
\end{equation*}
\end{corollary}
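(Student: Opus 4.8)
The plan is to recognize this corollary as a direct specialization of Theorem~\ref{prop-combinatorial} combined with the translation dictionary provided by Theorem~\ref{theorem-J}. The key observation is that Theorem~\ref{theorem-J} reduces the existence of an extremal K\"ahler metric on the semisimple principal toric fibration $(Y,\omega_Y)$ to the $(\v,\w)$-uniform K-stability of the moment polytope $P$ of the fiber, for the specific weights given in \eqref{weights}. Thus the entire task is to feed those particular weights into the sufficient condition \eqref{combinatorial1} of Theorem~\ref{prop-combinatorial} and simplify.

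First I would recall the weights from \eqref{weights}, namely $\v(x) = \prod_{a=1}^k (p_a(x) + c_a)^{n_a}$ and $\w(x) = \big(l_{\ext}(x) - \sum_{a=1}^k \frac{s_a}{p_a(x)+c_a}\big)\v(x)$. The crucial computational step is to evaluate the quantity $\v(x)(\ell+1) + d_x\v(x-x_0)$ appearing in \eqref{combinatorial1}. Since $p_a$ is affine (indeed linear after identifying it with an element of $\mathfrak{t}$), I would compute the differential of $\v$ by the product and chain rule:
\[
d_x\v(x-x_0) = \v(x)\sum_{a=1}^k \frac{n_a\, p_a(x-x_0)}{p_a(x)+c_a}.
\]
Dividing the whole of \eqref{combinatorial1} by the positive factor $\v(x)$ then turns the first bracketed term into $\frac{1}{L_j(x_0)}\big(\ell+1 + \sum_a \frac{n_a p_a(x-x_0)}{p_a(x)+c_a}\big)$, while $\frac{\w(x)}{2\v(x)}$ becomes exactly $\frac{1}{2}\big(l_{\ext}(x) - \sum_a \frac{s_a}{p_a(x)+c_a}\big)$. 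This reproduces the displayed inequality of the corollary verbatim.

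The remaining ingredient is to verify that the standing hypotheses of Theorem~\ref{prop-combinatorial} are met: that $\v \in \mathcal{C}^1(P,\R)$ is positive and $\w \in \mathcal{C}^0(P,\R)$, which holds because $p_a(x)+c_a>0$ on $P$ by the compatibility condition in \eqref{compatible-form}, and that $\DF$ vanishes on $\mathrm{Aff}(V)$, which is precisely the defining property \eqref{annulation-Futaki} of the extremal affine function $l_{\ext}$ used in Theorem~\ref{theorem-J}. I would note that dividing the inequality \eqref{combinatorial1} by $\v(x)>0$ preserves its sign, so the divided form is equivalent to the original hypothesis of the theorem.

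I do not anticipate a genuine obstacle here, since the statement is essentially a substitution; the only point requiring care is the bookkeeping in the differentiation of the product weight $\v$ and the cancellation of the common factor $\v(x)$. The potential for a sign or index error in computing $d_x\v(x-x_0)$ is the one place where I would double-check, but conceptually the proof is simply: apply Theorem~\ref{theorem-J} to convert the geometric problem into $(\v,\w)$-uniform K-stability, then apply Theorem~\ref{prop-combinatorial} with the explicit weights \eqref{weights} and simplify by $\v(x)$.
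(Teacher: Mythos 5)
Your proposal is correct and follows essentially the same route as the paper's proof: apply Theorem~\ref{theorem-J} to reduce to $(\v,\w)$-uniform K-stability, compute $d_x\v(x-x_0) = \v(x)\sum_a \frac{n_a p_a(x-x_0)}{p_a(x)+c_a}$ for the product weight, and divide condition~\eqref{combinatorial1} by the positive factor $\v(x)$. Your additional verification of the standing hypotheses (positivity of $\v$ from the compatibility condition, and vanishing of $\DF$ on affine functions from the defining property of $l_{\ext}$) is left implicit in the paper but is exactly right.
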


\begin{proof}
By Theorem~\ref{theorem-J}, the sufficient condition of Theorem~\ref{prop-combinatorial} translates as a sufficient condition of existence of extremal Kähler metrics. 
To obtain the statement above, it suffices to note that for the weight \(\v\) involved, we have 
\[ d_x\v(y) = \left( \sum_{a=1}^k \frac{n_a p_a(y)}{p_a(x) + c_a} \right) \v(x). \]
so that in the condition in Theorem~\ref{prop-combinatorial}, we can factor by \(\v(x)\) which is positive everywhere. 
\end{proof}

\subsection{Fibrations with Fano fiber}

We now turn to the fibrations with Fano fiber, in order to use Corollary~\ref{corollary-monotone}. 
With the same notations as in Section~\ref{section-sptf}, we now assume furthermore that the toric fiber is a Fano manifold, and that the Kähler class \([\omega_X]\) is a multiple of the anticanonical class \( 2 \pi c_1(X)\). 
As a consequence, the moment polytope \(P\) is a dilation of a reflexive lattice polytope. 
This implies that the labelled polytope \((P,\textbf{L})\) corresponding to the lattice polytope \(P\) is monotone, with a preferred point \(x_0\) and \(L_1(x_0)=\cdots=L_d(x_0)=t\). 
Assuming without loss of generality that the (anti-)canonical normalization is used for the moment polytope of the fiber, we may further assume that \(x_0=0\), and \(t= \frac{[\omega]}{2\pi c_1(X)}\). 

\begin{corollary}
\label{corollary-fano-fiber}
The semisimple principal toric fibration \((Y,[\omega_Y])\) with Fano toric fiber admits an extremal Kähler metric in \([\omega_Y]\) if \(\forall x\in P\), 
\begin{equation}{\label{condition-general}}
    2(\ell+\sum_a n_a) +2 + \sum_a \frac{t s_a-2n_ac_a}{p_a(x) + c_a} - t l_{\ext}(x) \geq 0 
\end{equation}
\end{corollary}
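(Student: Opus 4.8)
The plan is to deduce this directly from the monotone sufficient condition, Corollary~\ref{corollary-monotone}, combined with the equivalence of Theorem~\ref{theorem-J}. By Theorem~\ref{theorem-J}, existence of an extremal Kähler metric in \([\omega_Y]\) is equivalent to \((\v,\w)\)-uniform K-stability of \((P,\textbf{L})\) for the explicit weights \eqref{weights}, so it suffices to establish the latter. The first step is to record that the Fano hypothesis on the fiber, together with the anticanonical normalization \([\omega_X]=t\,2\pi c_1(X)\) discussed just before the statement, makes \(P\) a dilation of a reflexive lattice polytope; hence \((P,\textbf{L})\) is monotone, with distinguished interior point \(x_0=0\) and common value \(L_1(0)=\cdots=L_d(0)=t\). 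This places us exactly in the setting of Corollary~\ref{corollary-monotone}.

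Next I would check its hypotheses for the weights \eqref{weights}. Positivity of \(\v(x)=\prod_a (p_a(x)+c_a)^{n_a}\) on \(P\) holds because each factor \(p_a(x)+c_a\) is positive there by construction of the compatible class. The vanishing of \(\DF\) on \(\mathrm{Aff}(V)\) is automatic, since \(l_{\ext}\) is by definition the unique affine function for which \eqref{annulation-Futaki} holds with these weights. It therefore remains to verify that condition \eqref{condition-monotone}, namely \(\frac{1}{t}\big(\v(x)(\ell+1)+d_x\v(x)\big)-\frac{\w(x)}{2}\geq 0\) on \(P\), follows from \eqref{condition-general}.

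For this, I would use the logarithmic derivative of \(\v\) already computed in the proof of Corollary~\ref{condition-extremal-arbitrary-fiber}, namely \(d_x\v(x)=\big(\sum_a \tfrac{n_a p_a(x)}{p_a(x)+c_a}\big)\v(x)\) (taking \(y=x-x_0=x\)). Substituting this and \(\w(x)=\big(l_{\ext}(x)-\sum_a \tfrac{s_a}{p_a(x)+c_a}\big)\v(x)\) into \eqref{condition-monotone}, I can factor out the positive quantity \(\v(x)\) and clear the positive denominator \(t\). The resulting inequality is
\[ 2(\ell+1)+2\sum_a \frac{n_a p_a(x)}{p_a(x)+c_a}-t\,l_{\ext}(x)+t\sum_a \frac{s_a}{p_a(x)+c_a}\geq 0. \]

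The only genuine manipulation is the identity \(\frac{n_a p_a(x)}{p_a(x)+c_a}=n_a-\frac{n_a c_a}{p_a(x)+c_a}\), which turns the middle sum into \(2\sum_a n_a-2\sum_a \frac{n_a c_a}{p_a(x)+c_a}\). Combining the constant \(2(\ell+1)+2\sum_a n_a=2(\ell+\sum_a n_a)+2\) and merging the two rational sums into \(\sum_a \frac{t s_a-2n_a c_a}{p_a(x)+c_a}\) recovers exactly \eqref{condition-general}. Since none of these steps involves analysis beyond what is already packaged in Theorem~\ref{theorem-J} and the monotone corollary, there is no real analytic obstacle; the only points requiring care are the bookkeeping in this algebraic rearrangement and the verification that the Fano/reflexive structure indeed yields a monotone labelling with \(x_0=0\).
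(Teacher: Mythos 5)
Your proof is correct and follows essentially the same route as the paper: both rest on Theorem~\ref{theorem-J}, the monotone structure of the reflexive polytope with \(x_0=0\) and \(L_j(x_0)=t\), the logarithmic derivative \(d_x\v(x)=\bigl(\sum_a \tfrac{n_a p_a(x)}{p_a(x)+c_a}\bigr)\v(x)\), and the rearrangement \(n_a p_a(x)=n_a(p_a(x)+c_a)-n_a c_a\). The only difference is bookkeeping: the paper specializes Corollary~\ref{condition-extremal-arbitrary-fiber} to the monotone case, while you apply Corollary~\ref{corollary-monotone} directly and then invoke the geometric translation, which amounts to the same computation in a different order.
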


Note that \(\ell+\sum_a n_a = \dim(Y)\). 

\begin{proof}
Since all \(L_j(x_0)\) are equal to \(t\), the condition from Corollary~\ref{condition-extremal-arbitrary-fiber} further simplifies to 
\begin{equation*}
    2\ell+ 2 +  \sum_{a=1}^k \frac{2n_a p_a(x)+ t s_a}{p_a(x) + c_a} - t l_{\ext}(x) \geq 0 \text{ } \forall x \in P
\end{equation*}
as for Corollary~\ref{corollary-monotone}. 
Writing \(2n_ap_a(x) = 2n_a(p_a(x)+c_a) -2n_ac_a\) yields the statement. 
\end{proof}

While simple enough, and tractable with numerical optimization techniques, the inequality involved is a polynomial inequality in several variables, whose degree can be equal to the dimension of the basis plus one. 
It is difficult to solve formally, but there is a further reduction that allows to get a simpler condition which can be checked by a finite number of evaluations of polynomial functions. 

\begin{corollary}{\label{cor-vertex}}
Assume furthermore that for all \(a\), \(c_a\geq \frac{ts_a}{2n_a}\). 
Then the semisimple principal toric fibration \((Y,[\omega_Y])\) admits an extremal Kähler metric in \([\omega_Y]\) if inequation~\eqref{condition-general} is satisfied at  every vertex of \(P\).
\end{corollary}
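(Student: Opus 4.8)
The plan is to recognize the left-hand side of inequation~\eqref{condition-general} as a concave function of $x$ on $P$: since a concave function on a compact polytope attains its minimum at a vertex, checking the inequality at the vertices forces it to hold on all of $P$, and Corollary~\ref{corollary-fano-fiber} then applies.

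First I would write the left-hand side as
\[
g(x) := 2\Big(\ell+\sum_a n_a\Big) +2 - t\, l_{\ext}(x) + \sum_a \frac{t s_a-2n_ac_a}{p_a(x) + c_a},
\]
and separate it into the affine part $2(\ell+\sum_a n_a)+2 - t\, l_{\ext}(x)$, which is both convex and concave, and the nonlinear sum. The key structural input is that each $p_a$ is affine and, by the very construction of the compatible class, $p_a(x)+c_a>0$ for all $x\in P$. Hence on $P$ the function $x\mapsto 1/(p_a(x)+c_a)$ is the composition of an affine map with positive image and the convex function $u\mapsto 1/u$ on $(0,\infty)$; a one-line second-derivative computation along an arbitrary segment confirms it is convex on $P$.

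Next I would invoke the additional hypothesis, noting that $c_a\geq \frac{t s_a}{2 n_a}$ is precisely the statement $t s_a - 2 n_a c_a \leq 0$. Multiplying the convex function $1/(p_a(x)+c_a)$ by the nonpositive constant $t s_a - 2 n_a c_a$ therefore produces a concave function, so each summand of the nonlinear part is concave. Adding these to the affine part shows that $g$ is concave on $P$.

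Finally, since $P$ is compact and convex, the concave function $g$ attains its minimum over $P$ at an extreme point of $P$, that is, at a vertex. Consequently, if \eqref{condition-general} holds at every vertex of $P$ then $g\geq 0$ throughout $P$, and the existence of an extremal Kähler metric follows from Corollary~\ref{corollary-fano-fiber}. The whole argument rests on the sign condition $t s_a - 2 n_a c_a \leq 0$ turning each nonlinear term into a concave function; this is the only real point, and the sole thing requiring care is that $p_a+c_a$ never vanishes on $P$, which is guaranteed by construction.
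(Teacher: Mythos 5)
Your proof is correct and follows essentially the same route as the paper: both arguments observe that the reciprocal of the positive affine function \(p_a+c_a\) is convex on \(P\), that the sign hypothesis \(ts_a-2n_ac_a\leq 0\) turns each nonlinear summand into a concave function, and that a concave function on a compact convex polytope attains its minimum at a vertex, so that Corollary~\ref{corollary-fano-fiber} applies. Your write-up just makes explicit the small verifications (affineness of \(l_{\ext}\), positivity of \(p_a+c_a\)) that the paper leaves implicit.
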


\begin{proof}
The inverse of an affine function is convex on the locus where this affine function is positive. 
Hence under the condition in the statement, the function \(\frac{t s_a-2n_ac_a}{p_a + c_a}\) is concave on \(P\). 
Condition~\eqref{condition-general} thus amounts to checking the non-negativity of a concave function on a convex polytope: it is enough to check the non-negativity on vertices. 
\end{proof}

\begin{remark}
In the case of a \emph{simple} principal toric fibration, that is, if there is only one factor in the basis, then the condition becomes extremely simple for classes with  \(c_a\geq \frac{ts_a}{2n_a}\): it is enough to check a degree two polynomial inequation on every vertex of the moment polytope. This is actually used in most of the examples of Section \ref{section-examples}, see e.g. Proposition \ref{prop-ex-fano} or Proposition \ref{prop-rank-one}.
\end{remark}

\begin{remark}
We can write a similar statement for the general case of toric fibrations, by working on the cone decomposition. 
In that case the conditions to impose are: for all \(j\), for all \(a\), \(L_j(x_0)s_a-2n_a(p_a(x_0)+c_a) \leq 0\) and condition~\eqref{condition-extremal-arbitrary-fiber} is satisfied at all vertices of \(P_j\), that is, some vertices of \(P\) and \(x_0\). 
\end{remark}

\subsection{Extremal metrics in the anticanonical class}

An important special case when the toric fiber is Fano is given by the semisimple principal toric fibrations which are themselves Fano. 
By our general sufficient condition, we obtain a very simple condition for the existence of extremal Kähler metrics on Fano toric fibrations. 

\begin{corollary}
A Fano semisimple principal toric fibration \(Y\) admits an extremal Kähler metric in \(c_1(Y)\) if its extremal function \(l_{\ext}\) satisfies: 
\begin{equation}{\label{Fano-condition}}
    \sup l_{\ext} \leq 2(\dim(Y)+1)
\end{equation}
\end{corollary}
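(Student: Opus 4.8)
The plan is to recognize this corollary as the specialization of Corollary~\ref{cor-vertex} (equivalently of Corollary~\ref{corollary-fano-fiber}) to the situation in which the \emph{entire} fibration \(Y\), and not merely its fiber, is Fano, with \([\omega_Y]\) taken in the anticanonical class \(2\pi c_1(Y)\). The first step is therefore to pin down the numerical data attached to this globally Fano normalization, as supplied by Lemma~\ref{Lemma-AJL} and the remark following it: one takes \(c_a=I_a\) (the Fano index of \(B_a\)), the fiber sits in its own anticanonical class so that \(t=1\), and the constant scalar curvature of \(\omega_a\) is \(s_a=2n_aI_a=2n_ac_a\).

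Next I would substitute these values directly into condition~\eqref{condition-general}. The crux is a clean algebraic cancellation: since \(t s_a - 2n_ac_a = 2n_ac_a - 2n_ac_a = 0\), the entire base-curvature correction term \(\sum_a \dfrac{t s_a - 2n_ac_a}{p_a(x)+c_a}\) vanishes identically on \(P\). Using \(\ell+\sum_a n_a=\dim(Y)\) and \(t=1\), condition~\eqref{condition-general} then collapses to the purely affine inequality
\[
2(\dim(Y)+1) - l_{\ext}(x) \geq 0, \qquad \forall x\in P.
\]
I would then check that the hypothesis \(c_a\geq \frac{t s_a}{2n_a}\) of Corollary~\ref{cor-vertex} is met; indeed it holds with equality, as \(\frac{t s_a}{2n_a}=\frac{2n_ac_a}{2n_a}=c_a\). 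This legitimizes reducing the verification to the vertices of \(P\). Finally, because \(l_{\ext}\) is affine (Proposition~\ref{prop-affine-extremal}), its maximum over the compact polytope \(P\) is attained at a vertex, so the pointwise inequality above is equivalent to \(\sup l_{\ext}\leq 2(\dim(Y)+1)\), which is exactly the stated hypothesis.

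I do not expect any genuine obstacle here: the whole content of the statement is the vanishing of the correction term under the Fano normalization, after which the general combinatorial criterion degenerates to a single affine bound on \(l_{\ext}\). The only point demanding a little care is the consistent bookkeeping of the three normalizations \(t=1\), \(c_a=I_a\), and \(s_a=2n_ac_a\), and the observation that the equality case of Corollary~\ref{cor-vertex}'s hypothesis is still admissible.
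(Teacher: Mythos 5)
Your proposal is correct and follows essentially the same route as the paper: apply Corollary~\ref{corollary-fano-fiber} with the Fano normalization \(t=1\), \(c_a=I_a\), \(s_a=2n_ac_a\) from Lemma~\ref{Lemma-AJL}, so that the correction term \(\sum_a \frac{ts_a-2n_ac_a}{p_a(x)+c_a}\) vanishes and the condition collapses to \(2\dim(Y)+2-l_{\ext}\geq 0\) on \(P\). Your extra bookkeeping (the equality case \(c_a=\frac{ts_a}{2n_a}\) of Corollary~\ref{cor-vertex}, and the reduction to vertices via affineness of \(l_{\ext}\)) matches the paper's remark following its proof and is a harmless refinement.
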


\begin{proof}
By Lemma~\ref{Lemma-AJL}, for all \(a\), \(s_a=2n_ac_a\) and the condition from Corollary~\ref{corollary-fano-fiber} becomes 
\begin{equation*}
    2\dim(Y) + 2  - l_{\ext} \geq 0 \text{ on } P
\end{equation*}
\end{proof}

Of course, as in Corollary~\ref{cor-vertex}, it is enough to check this condition on vertices of the polytope. Furthermore, if \(l_{\ext}\) is constant, it is equal to \(2\dim(Y)\) since the class is the anticanonical one. As a consequence, the condition is strictly satisfied: 
\begin{equation}
\label{strictly-satisfied-Fano}
    2\dim(Y) + 2  - l_{\ext} = 2 > 0.
\end{equation}
In particular, we recover that a Fano toric fibration with vanishing Futaki invariant admits a Kähler-Einstein metric :

\begin{prop}[\cite{AJL}]
Let $(Y,\omega_Y)$ be a Fano semisimple principal toric fibration with vanishing Futaki invariant. Then there exists a Kähler-Einstein metric in $2\pi c_1(Y)$.
\end{prop}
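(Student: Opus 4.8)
The plan is to obtain the Kähler-Einstein metric as a special case of the extremal metric produced by the preceding corollary, the whole point being that the vanishing of the Futaki invariant forces condition~\eqref{Fano-condition} to hold automatically.

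First I would translate the hypothesis on the Futaki invariant into a statement about \(l_{\ext}\). As recorded in the discussion following Theorem~\ref{theorem-J}, the normalization \eqref{annulation-Futaki} defining \(l_{\ext}\) encodes the vanishing of the modified Futaki character, and \(l_{\ext}\) itself encodes the extremal vector field; the extremal metric is cscK precisely when \(l_{\ext}\) is constant. Since the extremal vector field is dual to the Futaki invariant through the Futaki--Mabuchi pairing, the assumption that the Futaki invariant of \((Y,\omega_Y)\) vanishes is equivalent to \(l_{\ext}\) being constant. Next I would use the observation made just before the statement, in \eqref{strictly-satisfied-Fano}: because the class is anticanonical, a constant \(l_{\ext}\) must equal \(2\dim(Y)\), as one reads off by testing \eqref{affine-function} against constant functions. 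Consequently \(\sup l_{\ext} = 2\dim(Y) \leq 2(\dim(Y)+1)\), with strict inequality, so condition~\eqref{Fano-condition} is satisfied and the preceding corollary provides an extremal Kähler metric \(\omega\) in \(2\pi c_1(Y)\).

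Finally I would upgrade this extremal metric to a Kähler-Einstein one. Because \(l_{\ext}\) is constant, the extremal equation \(Scal(\omega)=l_{\ext}(\mu_\omega)\) says exactly that \(\omega\) has constant scalar curvature, so it remains to invoke the classical fact that a cscK metric in the anticanonical class of a Fano manifold is automatically Kähler--Einstein. Concretely, writing the Ricci form as \(\rho = \omega + dd^c h\) with Ricci potential \(h\) (legitimate since \([\rho]=[\omega]=2\pi c_1(Y)\)), the scalar curvature \(Scal(\omega)\) differs from its constant average by a nonzero multiple of \(\Delta_\omega h\); constancy of \(Scal(\omega)\) therefore forces \(\Delta_\omega h\) to be a constant, which integrates to zero over the compact \(Y\), so \(h\) is harmonic, hence constant, and \(\rho=\omega\). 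This last passage from cscK to Kähler--Einstein is the only step relying on input external to the paper, and it is entirely standard; all the genuinely new content is absorbed in the reduction to condition~\eqref{Fano-condition}.
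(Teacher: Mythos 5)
Your proposal is correct and follows the paper's own route: the vanishing of the Futaki invariant makes \(l_{\ext}\) constant, equal to \(2\dim(Y)\) in the anticanonical class, so condition~\eqref{Fano-condition} holds strictly (this is exactly equation~\eqref{strictly-satisfied-Fano}), the corollary yields an extremal metric which is cscK, and the standard Ricci-potential argument upgrades cscK in \(2\pi c_1(Y)\) to Kähler--Einstein. The only difference is that you spell out the final cscK-to-KE step, which the paper leaves implicit.
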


More interestingly, we have the following consequence. 

\begin{prop}
Let \((Y,\omega_Y)\) be a Fano semisimple principal fibration. 
Then on a neighborhood of the anticanonical class, a compatible Kähler class admits a cscK metric if and only if its Futaki invariant vanishes. 
\end{prop}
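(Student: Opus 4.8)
The plan is to treat the two implications separately, the nontrivial content being sufficiency. Throughout I use the dictionary established after Theorem~\ref{theorem-J}: the compatible extremal metric produced there is cscK precisely when the extremal affine function $l_{\ext}$ is constant, and the classical Futaki invariant of a compatible class vanishes exactly when its $l_{\ext}$ is the constant equal to the average scalar curvature $\bar S$. With this in hand, the necessity direction is immediate and entirely general: if $[\omega_Y]$ carries \emph{any} cscK metric, Futaki's theorem forces its Futaki invariant to vanish, with no use of the fibration structure. So I would dispose of that in one line and concentrate on the converse.

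For the sufficiency I would parametrize the compatible Kähler classes near the anticanonical one by $(t,(c_a))$ near $(1,(I_a))$, as in Section~\ref{section-projective-bundles} and Lemma~\ref{Lemma-AJL}, using the (anti-)canonical normalization $x_0=0$ and writing $P=P(t)=tP_0$ for the reflexive polytope $P_0$ of the fiber. Assume such a class has vanishing Futaki invariant, so that $l_{\ext}\equiv\bar S$ is constant. The strategy is then simply to verify the sufficient condition~\eqref{condition-general} of Corollary~\ref{corollary-fano-fiber}, which already yields an extremal metric; since $l_{\ext}$ is constant this extremal metric is automatically cscK, which is exactly what we want.

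To verify~\eqref{condition-general} uniformly over the varying polytope $P(t)$, I would decouple the value of $l_{\ext}$ from the remaining data and study the jointly continuous function
\begin{equation*}
G(t,(c_a),s;y) := 2\dim(Y)+2 + \sum_a \frac{t s_a - 2 n_a c_a}{t\,p_a(y)+c_a} - t s, \qquad y\in P_0,
\end{equation*}
obtained from the left-hand side of~\eqref{condition-general} after the substitution $x=ty$ and treating the constant value $s$ of $l_{\ext}$ as an independent variable (the denominators $t\,p_a(y)+c_a$ stay positive near the base point by the Fano positivity of Lemma~\ref{Lemma-AJL}). At the base point $(t,(c_a),s)=(1,(I_a),2\dim(Y))$ the middle sum vanishes because $s_a=2n_aI_a$, so $G\equiv 2>0$ on the compact set $P_0$; this is precisely the strict inequality~\eqref{strictly-satisfied-Fano}. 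By continuity of $G$ and compactness of $P_0$ there is a neighborhood of that base point on which $G\ge 1>0$ for all $y\in P_0$. Since $\bar S$ depends continuously (indeed rationally, being a ratio of intersection numbers) on the class, with $\bar S\to 2\dim(Y)$ as $(t,(c_a))\to(1,(I_a))$, for classes with vanishing Futaki invariant close enough to the anticanonical one the triple $(t,(c_a),\bar S)$ lies in this neighborhood. There $G$ equals the left-hand side of~\eqref{condition-general} with $l_{\ext}=\bar S$, so~\eqref{condition-general} holds and Corollary~\ref{corollary-fano-fiber} applies.

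The main subtlety, and the reason for introducing $s$ as a free variable, is that the anticanonical class itself need \emph{not} have vanishing Futaki invariant: its $l_{\ext}$ may be genuinely non-constant, so one cannot perturb continuity-wise from an actual cscK point. Anchoring the argument at the hypothetical value $s=2\dim(Y)=\bar S(\text{anticanonical})$, which is well-defined purely cohomologically regardless of whether the extremal vector field vanishes, bypasses this. The only remaining points to check carefully are the continuity of $\bar S$ and of the labelled polytope $P(t)$ in the class parameters (Remark~\ref{remark-continuous} supplies the latter once one rescales to the fixed $P_0$), and the identification of the constant value of $l_{\ext}$ with $\bar S$ under vanishing of the Futaki invariant.
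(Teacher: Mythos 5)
Your core mechanism is exactly the paper's: anchor the sufficient condition at the hypothetical constant value $s=2\dim(Y)=\bar S$ attached to the anticanonical class (this is equation~\eqref{strictly-satisfied-Fano}, where the condition holds strictly, with value $2$), then propagate by continuity, and use that vanishing of the Futaki invariant forces $l_{\ext}\equiv \bar S$, so that the extremal metric produced by the sufficient condition is cscK. Your identification of the key subtlety, namely that the anticanonical class itself may fail to have vanishing Futaki invariant so that one cannot perturb from an actual cscK metric, is also the right one, and it is precisely why the paper argues with the stability condition rather than with solutions.

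There is, however, a genuine gap in your first step: compatible K\"ahler classes near the anticanonical one are \emph{not} parametrized by $(t,(c_a))$ in general. A compatible class has the form $[\omega_X]+\sum_a c_a\pi^*[\omega_a]$ where $[\omega_X]$ ranges over the whole K\"ahler cone of the toric fiber $X$; your slice $[\omega_X]=t\,2\pi c_1(X)$ exhausts a neighborhood only when $b_2(X)=1$, that is, when $X=\P^{\ell}$. For a fiber such as $\P^1\times\P^1$ or a toric del Pezzo surface, nearby compatible classes deform the moment polytope non-homothetically; the perturbed polytope is then no longer monotone, so Corollary~\ref{corollary-fano-fiber}, on which your verification rests, does not apply to those classes, and your argument establishes the proposition only on a proper subfamily of the classes it concerns. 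The repair is what the paper does: run the same continuity argument on condition~\eqref{combinatorial1} of Theorem~\ref{prop-combinatorial} (equivalently, Corollary~\ref{condition-extremal-arbitrary-fiber}) with the fixed base point $x_0=0$. By Remark~\ref{remark-continuous} that condition depends continuously on the labelled polytope as well as on the weights, hence on $s=\bar S$, and it is strict (equal to $2$) at the anticanonical data, so it survives arbitrary small perturbations of all the data; with this substitution the rest of your argument goes through verbatim.
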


\begin{proof}
By Remark~\ref{remark-continuous}, the non-negativity condition in Theorem~\ref{prop-combinatorial} varies continuously with the weight. 
By equation~\eqref{strictly-satisfied-Fano}, that condition is strictly satisfied at the anticanonical class, so it is satisfied on a neighborhood of this class. 
The only added assumption in Theorem~\ref{prop-combinatorial} translates as vanishing of the Futaki invariant. 
\end{proof}

\begin{remark}
Of course, if the Futaki invariant of the anticanonical class vanishes, this is already known by Lebrun-Simanca \cite{LS93} and \cite{AJL}.  
Similarly, if the anticanonical class is strictly K-unstable, nearby classes will be as well. 
However, in the present setting, working directly with the condition it is not hard to find an explicit neighborhood which works. 
Furthermore, the statement applies even when we do not know whether there exists an extremal Kähler metric or not in the anticanonical class. 
In the current state of knowledge, it could also happen that the anticanonical class is K-semistable (for the notion of relative K-stability adapted to extremal Kähler metrics), and the above proposition would still apply in that case. 
This is a further illustration of a phenomenon observed in \cite{KSSV2}. 
\end{remark}


\subsection{Weighted solitons on Fano semisimple principal toric fibrations}

\label{section-soliton}

Recall a $\v$-soliton is a K\"ahler metric $\omega$ such that

\begin{equation*}
    Ric(\omega) - \omega= \frac{1}{2}dd^clog(\v),
\end{equation*}

\noindent where $Ric(\omega)$ is the Ricci form of $\omega$. On Fano semisimple principal toric fibrations $Y$ with fiber $X$, a K\"ahler metric $\omega_Y \in 2\pi c_1(Y)$ is a $\v$-soliton if and only if its corresponding metric $\omega_X \in 2 \pi c_1(X)$ is $(\v\v_0, \tilde{\v})$-cscK  (see \cite[Lemma 2.2, Lemma  5.11]{AJL}) for the weights $\tilde{\v}:= 2(\ell \v_0(x) \v(x) + d_x (\v_0 \v)(x))$ and $\v_0$ is defined in (\ref{weights}).  Since the polytope must be reflexive hence monotone, one has \(x_0=0\), \(t=1\) and condition~\eqref{condition-monotone} becomes \(\v\geq 0\) on the polytope, which is obviously satisfied. Moreover, by \cite[Proposition 7.8]{SJ}, the $(\v,\tilde{\v})$-uniform K-stability implies the coercivity of the corresponding weighted $(\v,\tilde{\v})$-Mabuchi functional. Thanks to \cite[Theorem 3.5]{LH}, we obtain the existence of a weighted soliton in $2\pi c_1(Y)$. 
This result was obtained in \cite[Theorem 3]{AJL}, by applying K-stability arguments from \cite{LH}. The proof proposed above allows to remain fully on the differential geometric side.  

\begin{corollary}{\cite[Theorem~3]{AJL}}{\label{Fano-soliton}}
Let $Y$ be a Fano semisimple principal toric fibration with associate Delzant polytope $P$. Consider the weighted Donaldson--Futaki invariant $\mathcal{F}$ for the weights corresponding to $\v$-solitons defined above. Then, if $\mathcal{F}$ vanishes, there exists a $\v$-soliton in $2\pi c_1(Y)$. 
\end{corollary}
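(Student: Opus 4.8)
The plan is to reduce the existence of a $\v$-soliton on $Y$ to the weighted uniform K-stability of the moment polytope $P$ of the fiber, and then to observe that for the soliton weights the monotone sufficient condition of Corollary~\ref{corollary-monotone} degenerates to a triviality, so that the vanishing of $\mathcal{F}$ becomes the only genuine requirement.

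First I would invoke the soliton/weighted-cscK dictionary: by \cite[Lemma~2.2, Lemma~5.11]{AJL}, a Kähler metric $\omega_Y \in 2\pi c_1(Y)$ is a $\v$-soliton if and only if the corresponding $\T$-invariant metric $\omega_X \in 2\pi c_1(X)$ on the fiber is $(\v\v_0,\tilde\v)$-cscK, where $\v_0(x) = \prod_{a=1}^k (p_a(x)+c_a)^{n_a}$ is the weight appearing in \eqref{weights} and $\tilde\v = 2\big(\ell\,\v_0\v + d_x(\v_0\v)\big)$. Thus it suffices to produce a $(\v\v_0,\tilde\v)$-cscK metric in $[\omega_X]$.

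Next I would exploit the monotone structure. Since $Y$ is Fano its toric fiber $X$ is Fano with $[\omega_X]=2\pi c_1(X)$, so $P$ is reflexive and $(P,\mathbf{L})$ is monotone with the canonical choice $x_0=0$ and $L_1(x_0)=\cdots=L_d(x_0)=t=1$. Substituting the weights $\v\v_0$ and $\tilde\v$ into condition~\eqref{condition-monotone}, the term $(\ell+1)\v\v_0$ combines with $-\ell\,\v_0\v$ to leave $\v\v_0$, while the two radial-derivative terms $d_x(\v\v_0)$ and $d_x(\v_0\v)$ cancel; the condition reduces to $\v\v_0\geq 0$, which holds automatically because $\v$ is positive and $\v_0>0$ on $P$ (the latter by the defining inequality $p_a\circ\mu + c_a>0$). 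The structural reason is that $\tilde\v$ is exactly the soliton-type weight $2\big(\ell(\v_0\v)+d_x(\v_0\v)\big)$ attached to the product weight $\v_0\v$, which is precisely the shape that makes \eqref{condition-monotone} collapse. The only other hypothesis of Corollary~\ref{corollary-monotone} is that $\DF$ vanish on $\mathrm{Aff}(V)$, i.e.\ condition~\eqref{annulation-Futaki}, and this is exactly the assumption that the weighted Donaldson--Futaki invariant $\mathcal{F}$ vanishes. Hence $(P,\mathbf{L})$ is $(\v\v_0,\tilde\v)$-uniformly K-stable.

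Finally I would pass from K-stability to existence, following the differential-geometric route highlighted in the text rather than re-running the proof of Theorem~\ref{theorem-J} (which is set up for the extremal weights). By \cite[Proposition~7.8]{SJ}, the $(\v\v_0,\tilde\v)$-uniform K-stability of $P$ implies coercivity of the associated weighted Mabuchi functional on the space of compatible potentials, and by \cite[Theorem~3.5]{LH} this coercivity yields a genuine $(\v\v_0,\tilde\v)$-cscK metric in $[\omega_X]$, equivalently a $\v$-soliton in $2\pi c_1(Y)$. I do not expect a real obstacle here: the heavy analytic input (coercivity $\Rightarrow$ existence) is imported wholesale from \cite{SJ} and \cite{LH}, and the entire novelty of the argument is concentrated in the elementary cancellation in \eqref{condition-monotone}. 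The one point demanding care is to check that the soliton weight $\tilde\v$ supplied by \cite{AJL} is exactly the one producing this cancellation, so that the sufficient condition holds unconditionally.
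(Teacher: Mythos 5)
Your proposal is correct and follows essentially the same route as the paper: the soliton/weighted-cscK dictionary from \cite[Lemma~2.2, Lemma~5.11]{AJL}, the observation that for the soliton weights condition~\eqref{condition-monotone} collapses (by the same cancellation you describe) to the automatic positivity of $\v\v_0$ so that only the Futaki vanishing remains, then \cite[Proposition~7.8]{SJ} for K-stability implies coercivity and \cite[Theorem~3.5]{LH} for coercivity implies existence. Your write-up is if anything slightly more precise than the paper's, which abbreviates the collapsed condition as ``$\v\geq 0$'' where the exact statement is $\v\v_0\geq 0$.
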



\section{Examples} 
\label{section-examples}

\subsection{Examples of bases}{\label{exemple-de-base}}

In this section, we comment on examples of possibles bases for the semisimple principal toric fibration construction. 
This allows to determine possible values of \(s_a\) to plug into the condition. 
The easiest way to get a cscK basis is to choose a Kähler-Einstein manifold, equipped with a multiple of its first Chern class when it is definite, and with an arbitrary Kähler class for Calabi-Yau manifolds. 

For canonically polarized manifolds, there always exists a Kähler-Einstein metric in \(- 2\pi c_1(X)\), and there exists such manifolds in every dimension. 
In particular, the value \(s_a = -\frac{2n_a}{k_a} \) are always allowed, for \(k_a\in \mathbb{Z}_{>0}\).
For manifolds with zero first Chern class, there always exist Kähler-Einstein metrics with zero scalar curvature.
For the positive curvature case, since the projective space of dimension \(n\) is a Kähler-Einstein manifold of index \(n+1\), all the values \(s_a=2\frac{n_a(n_a+1)}{k_a}\) are allowed, for \(k_a\in \mathbb{Z}_{>0}\). 
More generally, for a Kähler-Einstein Fano basis of dimension \(n_a\) and index \(I_a\), then all the values \(s_a=2\frac{n_a I_a}{k_a}\) are allowed, for \(k_a\in \mathbb{Z}_{>0}\). 
Note that the Fano index of an \(n\)-dimensional Fano manifold is always an integer between \(1\) and \(n+1\).
Here are a couple known results on existence of Fano Kähler-Einstein manifolds when \(n\) is small or \(I\) is large: 
\begin{itemize}
    \item if \(I=n+1\) then \(X=\mathbb{P}^n\) is the \(n\)-dimensional projective space, and it is Kähler-Einstein,
    \item if \(I=n\) then \(X=Q^n\) is the \(n\)-dimensional quadric, and it is Kähler-Einstein,
    \item if \(n=1\) then \(X=\mathbb{P}^1\), \(I=2\) and it is Kähler-Einstein, 
    \item if \(n=2\) then \(\mathbb{P}^2\) (index 3), \(X=\mathbb{P}^1\times \mathbb{P}^1\) (index 2) and the blowups of \(\mathbb{P}^2\) (index 1) at three or more points are Kähler-Einstein, 
    \item if \(n=3\), then the existence of Kähler-Einstein metrics on a general member of a deformation family of smooth Fano threefolds was recently settled in \cite{ACC+}, and the families where the general member is \emph{not} Kähler-Einstein are the following, in the labelling used in \cite{ACC+}, 
    2.23, 2.26, 2.28, 2.30, 2.31, 2.33, 2.35, 2.36, 3.14, 3.16, 3.18, 3.21, 3.22, 3.23, 3.24, 3.26, 3.28, 3.29, 3.30, 3.31, 4.5, 4.8, 4.9, 4.10, 4.11, 4.12, 5.2.
\end{itemize}

\subsection{\(\mathbb{P}^2\)-fiber over Fano threefold}{\label{section-fano}}

We consider the $2$-dimensional projective space $(\P^2,\mathbb{T}^2,2\pi c_1(\P^2))$. Identifying the lattice  $\Lambda$ of $\mathbb{T}^2$ with $\Z^2$, we consider its labelled moment polytope $(P,\textbf{L})$ in $\R^2$

\begin{equation}{\label{definition-polytope}}
    P=\{ (x_1,x_2)=:x \in \R^2 \text{ } | \text{ }  L_1(x) \geq 0, \text{ } L_2(x) \geq  0, \text{ } L_3(x) \geq 0\},
\end{equation}

\noindent where $L_1(x):= x_1+1$, $L_2(x):=x_2+1$, $L_3(x):=-x_1-x_2 +1$.  Let $(B,\omega_B)$ be a KE Fano threefold with $\alpha_B:=[\omega_B]$ primitive element of $H^2(B,\Z)$ proportional to the first Chern class $2 \pi c_1(B)$. Let
$\mathcal{L}_i \longrightarrow B$ be a holomorphic line bundle of degree $p_i$ proportional to the anticanonical line bundle $-K_B$, i.e. $ p_i \alpha_B= 2\pi c_1(\mathcal{L}_i)$. We consider a \textit{simple} principal toric fibration (i.e. the basis has only one factor) $ \pi : Y:=\mathbb{P}(\mathcal{L}_0 \oplus \mathcal{L}_1 \oplus \mathcal{L}_2) \longrightarrow B$.   Since the holomorphic class of $Y$ is invariant by tensoring $\mathcal{L}_0 \oplus \mathcal{L}_1 \oplus \mathcal{L}_2$ with a line bundle, we can suppose without loss of generality that $L_0=\mathcal{O}$ is the trivial line bundle and $p_i \geq 0$, $i=1,2$. 
When $B$ is a local K\"ahler product of nonnegative cscK metric and $p_1=p_2 > 0$ or $p_2>p_1=0$, it is known \cite[Proposition 11]{ACGTIII}, that there exists an extremal metric in every compatible K\"ahler classes. We then suppose $p_2\geq p_1 > 0$.

The compatible K\"ahler classes are parametrized by constants $c$ and are of the form 

\begin{equation}{\label{alphac}}
    \alpha_c :=  2\pi c_1(O_Y(3)) + c \pi^*(\alpha_B),
\end{equation}

\noindent As introduced in Section \ref{exemple-de-base}, since \(B\) is a Fano threefold, the only possible Fano indices $I$ are $1$, $2$, $3$ or $4$. In the case where $B$ is the quadric $Q_3$ or the projective space $\P^3$ (i.e. if $I=3$ or $I=4$ respectively), Leray-Hirch Theorem shows that $H^2(Y,\R)\cong \R^2$.
It follows that, up to scaling, all K\"ahler classes are compatible, i.e. of the form of (\ref{alphac}). It is known that \cite[Theorem 4]{ACGT}  for $c $ sufficiently large, the class $\alpha_c$ is extremal. The following Proposition gives a precise value for $c$, depending on $p_1$ and $p_2$, from which $\alpha_c$ admits an extremal metric.

\begin{prop}{\label{prop-ex-fano}}
Let $Y=\P(\mathcal{O} \oplus \mathcal{L}_1 \oplus \mathcal{L}_2) \longrightarrow B$ be a simple principal toric fibration over a K\"ahler-Einstein Fano threefold $B$, where $\mathcal{L}_1$ and $\mathcal{L}_2$ are holomorphic line bundles of degrees $1 \leq p_1 \leq p_2 $ proportional to the anti-canonical line bundle $-K_B$ of $B$. Then there exists an extremal metric in $\alpha_c$ for $c\geq 7 p_2$.
\end{prop}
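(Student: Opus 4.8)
The plan is to apply Corollary~\ref{cor-vertex} to the concrete situation of a \(\mathbb{P}^2\)-fiber over a Fano threefold. First I would record the explicit combinatorial data: the fiber is \(\P^2\) so \(\ell=2\), the base is a single Fano threefold so \(k=1\), \(n_1=\dim(B)=3\), and the moment polytope is the standard simplex \eqref{definition-polytope} with vertices \((-1,-1)\), \((1,-1)\) and \((-1,1)\). Since \([\omega_X]=2\pi c_1(\P^2)\), the polytope is reflexive hence monotone with \(x_0=0\) and \(t=1\). The linear function on \(\mathfrak{t}^*\) associated to the one-parameter subgroups is \(p_1(x)=p_{11}x_1+p_{12}x_2\), and because the line bundles \(\mathcal{L}_i\) have degrees \(p_i\) proportional to \(-K_B\), the relevant combinatorial input is the weight \(p(x)\) taking the values \(0, p_1, p_2\) at the three vertices (after the normalization \(L_0=\mathcal{O}\)). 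The scalar curvature constant is \(s:=s_1=2n_1 I/k = 6I\) where \(I\) is the Fano index of \(B\).

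**Reducing to a vertex check.**

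Next I would verify the hypothesis \(c\geq \frac{ts_1}{2n_1}=\frac{s}{6}=I\) of Corollary~\ref{cor-vertex}. Since \(I\leq 4\) for a Fano threefold and we are proving the result for \(c\geq 7p_2\geq 7\), this hypothesis holds comfortably. With the hypothesis in hand, Corollary~\ref{cor-vertex} reduces the existence of an extremal metric to checking inequality~\eqref{condition-general} at the three vertices of \(P\). At \(t=1\), \(\dim(Y)=\ell+n_1=5\), so \eqref{condition-general} reads
\[ 12 + \frac{s-6c}{p(x)+c} - l_{\ext}(x) \geq 0 \]
at each vertex, where \(p(x)\in\{0,p_1,p_2\}\). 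The extremal affine function \(l_{\ext}\) is determined by the vanishing of the Futaki invariant \eqref{annulation-Futaki} for the weights~\eqref{weights}; by Proposition~\ref{prop-affine-extremal} it is an honest affine function on \(P\), so I would compute it explicitly by integrating against the labelled boundary measure and Lebesgue measure over the simplex.

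**The main computation and the obstacle.**

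The genuinely laborious—and therefore the main—obstacle is computing \(l_{\ext}\) explicitly as a function of \(p_1,p_2,c\) and then establishing the three vertex inequalities uniformly for \(c\geq 7p_2\). Solving for \(l_{\ext}=A+Bx_1+Cx_2\) requires evaluating the weighted moments \(\int_P x^\alpha \v\,dx\) and \(\int_{\partial P} x^\alpha \v\,d\sigma\) for the weight \(\v(x)=(p(x)+c)^3\) over the triangle, which produces rational functions of \(c\). Once \(l_{\ext}\) is in hand, each vertex condition becomes a rational inequality in \(c\) (with numerator a polynomial in \(c,p_1,p_2\)); clearing the positive denominator \(p(x)+c>0\), I would show the resulting polynomial is nonnegative for all \(c\geq 7p_2\), exploiting \(1\leq p_1\leq p_2\). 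The constant \(7\) is presumably the outcome of making the worst vertex inequality tight; I expect the vertex where \(p(x)=p_2\) (largest shift) to govern the bound. Because the explicit moment integrals over a two-dimensional simplex against a cubic weight are exactly the kind of formal-or-numerical computation the authors automated in their Python program, I would lean on that program (or a direct symbolic integration) to produce \(l_{\ext}\), and then verify the three polynomial inequalities hold for \(c\geq 7p_2\), which finishes the proof.
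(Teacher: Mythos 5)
Your plan coincides with the paper's proof in structure: verify the hypothesis $c \geq ts/(2n)=I$ of Corollary~\ref{cor-vertex} (which holds since $c\geq 7p_2\geq 7 > 4 \geq I$), reduce to checking inequality~\eqref{condition-general} at the three vertices of the moment polytope, compute $l_{\ext}$ by symbolic integration (the paper uses Program~\ref{P2-program}), and then prove the resulting rational inequalities for all $c\geq 7p_2$, $1\leq p_1\leq p_2$. The paper carries this out for $I=4$ (the other indices being identical), writing the left-hand side at each vertex as $P(c,p_1,p_2)/Q(c,p_1,p_2)$ and exhibiting explicit positive polynomials $R(c)\leq P(c,p_1,p_2)$ and $S(c)\geq Q(c,p_1,p_2)$ (Appendix~\ref{appendixB}).

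However, your combinatorial setup contains concrete errors that would derail the computation if executed literally. The moment polytope~\eqref{definition-polytope} of $(\P^2, 2\pi c_1(\P^2))$ has vertices $(-1,-1)$, $(2,-1)$, $(-1,2)$, not $(-1,-1)$, $(1,-1)$, $(-1,1)$: the simplex you describe has the origin on the facet $x_1+x_2=0$, so it is not monotone about $x_0=0$ and Corollary~\ref{cor-vertex} would not apply to it at all. Likewise, with the anticanonical normalization the relevant linear function is $p(x)=p_1x_1+p_2x_2$, which takes the values $-p_1-p_2$, $2p_1-p_2$, $2p_2-p_1$ at the three vertices, not $0,p_1,p_2$; the value set $\{0,p_1,p_2\}$ corresponds to the unit simplex, i.e.\ to $t=\tfrac13$, contradicting your (correct) assertion that $t=1$. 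Consistently with the correct values, positivity of the weight requires $c>p_1+p_2$, cf.~\eqref{condition-ca}, and the vertex governing the bound is $(-1,2)$, where $p=2p_2-p_1$. These are fixable slips, and the symbolic computation you defer to would use the correct data (as the paper's program does), but as written the three vertex inequalities you propose to verify are not the right ones, and since the entire content of this proposition is that explicit computation, the setup must be corrected before the argument is complete.
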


\begin{proof}
Since the arguments are identical for each Fano index $I$, we give the proof only for $I=4$.

By Corollary \ref{cor-vertex}, for $c \geq 4$,  it is sufficient to check (\ref{condition-general}) evaluated in each vertex $v_1:=(-1,2)$, $v_2:=(-1,-1)$, $v_3:=(2,-1)$ of the polytope $P$.

Using Program \ref{P2-program} in Appendix $\ref{appendix}$, we find that the LHS of $(\ref{condition-general})$ evaluated in $v_1$ is a rational fraction in the variables $c$, $p_1$, $p_2$:

\begin{equation*}
    \text{ LHS of } (\ref{condition-general}) = \frac{P(c,p_1,p_2)}{Q(c,p_1,p_2)}.
\end{equation*}

\noindent We give the explicit expression of the polynomials $P$ and $Q$ in Appendix \ref{appendixB}. Suppose now $c \geq 7 p_2 $ and $p_2 \geq p_1 \geq 1$. Then we can find two polynomials

\begin{equation*}
\begin{split}
    R(c):=&12250c^{10} - 73500c^9 - 295470c^8 + 1296540c^7 - 3657150c^6+ 3776220c^5 \\
    & - 6537672c^4 + 5624964c^3 - 6193584c^2 + 85920232c - 1889568
\end{split}    
\end{equation*}

\noindent and

\begin{equation*}
\begin{split}
S(c):=& 6125c^{10} + 18375c^9 + 6615c^8 + 19845c^7 + 127575c^6 \\
&+  382725c^5 + 17496c^4 + 52488c^3 - 288684c^2 - 866052c
\end{split}
\end{equation*}

 \noindent such that

\begin{equation*}
    0 < R(c) \leq P(c,p_1,p_2) 
\end{equation*}

\noindent and

\begin{equation*}
    0< S(c) \text{ } \text{ and } S(c) \geq Q(c,p_1,p_2).
\end{equation*}

\noindent It implies that

\begin{equation*}
 \text{ LHS of } (\ref{condition-general}) = \frac{P(c,p_1,p_2)}{Q(c,p_1,p_2)} \geq \frac{R(c)}{S(c)} \geq 0.
\end{equation*}

\noindent We proceed analogously for the vertex $v_2$ and $v_3$. We conclude the proof by involving Corollary \ref{cor-vertex}.

\end{proof}

\begin{remark}
In Proposition \ref{prop-ex-fano}, we obtain a lower bound on $c$  depending only on the degrees $p_1$ and $p_2$ of the line bundles $\mathcal{L}_1$ and $\mathcal{L}_2$.  For given values of  $p_1$ and $p_2$ it is possible to obtain a more optimal result. Indeed, suppose $p_1$ and $p_2$ are fixed. Then, the LHS of (\ref{condition-general}) is a rational fraction $F$ depending only on the variable $c$. We then only need to look for constant $\alpha$ such that $F$ is non-negative for $c \geq \alpha$. For example, if $B=\P^3$, respectively  $B=Q_3$, $p_1=1$ and $p_2=2$, (\ref{condition-general}) show the existence of an extremal metric in $\alpha_c$ for $c \geq 7.09$, respectively $c \geq 9.08$. We refer to Appendix \ref{appendix} for further examples of application of the sufficient condition on simple principal $\P^2$-fibrations.
\end{remark}

\subsection{Comments on the rank one case}

\subsubsection{Varying \(x_0\) and prescribing weighted scalar curvature on \(\mathbb{P}^1\)}
\label{section-varying}

As noted in Remark~\ref{remark-varying}, it can be useful to vary the base point \(x_0\). 
In this short paragraph, we want to illustrate this phenomenon in the simplest possible case, that is, when working on the one-dimensional polytope \([-1,1]\subset \mathbb{R}\) with the weights \(\v \equiv 1\) and arbitrary \(\w\). 
We further choose the lattice labelling of \([-1,1]\) induced by the lattice \(\mathbb{Z}\subset \mathbb{R}\) (in other word, we work on the anticanonical moment polytope of \(\mathbb{P}^1\)).
More precisely, the labelling \((L_1,L_2)\) is given by \(L_1(x)=1+x\) and \(L_2(x)=1-x\). 
Since \(\v \equiv 1\), we have \(d_x\v \equiv 0\), hence condition (\ref{combinatorial1}) from Theorem~\ref{prop-combinatorial} translates as 
\[ \frac{1}{4}\w|_{[-1,x_0]} \leq \frac{1}{1+x_0} \qquad \text{and} \qquad \frac{1}{4}\w|_{[x_0,1]} \leq \frac{1}{1-x_0} \]
The latter condition is illustrated in Figure~\ref{figure-varying-x_0}, and it is obviously less restrictive if one can choose \(x_0\) than the uniform condition corresponding to the obvious choice of \(x_0=0\) for the monotone lattice polytope \([-1,1]\). 

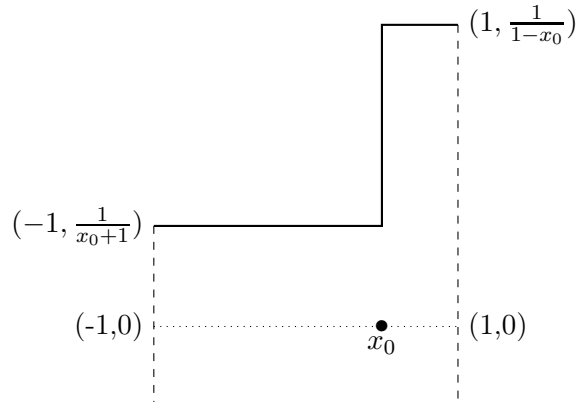
\begin{figure}
\caption{Varying \(x_0\)}
\label{figure-varying-x_0}
\begin{tikzpicture}
\draw[thick] (-2,4/3) -- (1,4/3) -- (1,4) -- (2,4);
\draw[dotted] (-2,0) -- (2,0);
\draw (-2,0) node[left]{(-1,0)};
\draw (2,0) node[right]{(1,0)};
\draw (1,0) node{\(\bullet\)};
\draw (1,0) node[below]{\(x_0\)};
\draw (-2,4/3) node[left]{\((-1,\frac{1}{x_0+1})\)};
\draw (2,4) node[right]{\((1,\frac{1}{1-x_0})\)};
\draw[dashed] (-2,4/3) -- (-2,-1);
\draw[dashed] (2,4) -- (2,-1);
\end{tikzpicture}
\end{figure}

We end this paragraph by recalling that \((1,\w)\)-uniform stability of the lattice polytope \([-1,1]\) translates to existence of certain canonical Kähler metrics on \(\mathbb{P}^1\) thanks to \cite{LLS21}. 

\subsubsection{Extremal metrics on \(\mathbb{P}^1\)-bundles}

We have focused on applications of our sufficient condition to semisimple principal toric bundles with dimension two toric fiber. 
This is because in the case of a one-dimensional toric fiber, quite a few strong results have been shown in \cite{ACGTIII}. 
For example, it is proved in \cite[Proposition~11]{ACGTIII} that if all factors \((B_a,\omega_a)\) of the basis have non-negative constant scalar curvature, and the fiber is one-dimensional, then there exists an extremal Kähler metric in all compatible Kähler classes. 

There cannot be such a result if some factors of the basis have negative constant scalar curvature, as shown by examples in \cite{ACGTIII}. 
More importantly, some of these examples motivated the initial introduction of the notion of uniform K-stability, as they are likely relatively K-polystable in the sense of \cite{Sze07}, but do not admit extremal Kähler metrics. 

On the positive side, by \cite[Theorem~1]{ACGTIII}, there always exist extremal Kähler metrics on a semisimple principal \(\mathbb{P}^1\)-fibration, when all the \(c_a\) are large enough, an example of existence of extremal Kähler metrics in an adiabatic regime for fibrations. 
However, it is not so easy to derive explicit Kähler classes with extremal Kähler metrics from this asymptotic proof. 
A possible approach to get explicit classes with extremal Kähler metrics would be to compute the extremal polynomial (in the terminology of \cite{ACGTIII}) and check when it is positive. 
This is less practical than our sufficient condition, which involves only checking the positivity of a polynomial at two points. 
We provide in the appendix an elementary computer program which checks whether our sufficient condition is satisfied for a simple principal \(\mathbb{P}^1\)-fibration, which could easily be adapted to the case of a semisimple principal \(\mathbb{P}^1\)-fibration. 

\subsubsection{A more explicit example}

Consider \(B\) a three-dimensional canonically polarized manifold, equipped with its Kähler-Einstein metric in \(- 2 \pi c_1(X)\), whose scalar curvature is thus equal to \(-6\). 
We consider the sufficient condition for existence of extremal Kähler metrics in admissible Kähler classes on the \(\mathbb{P}^1\)-bundles \(\mathbb{P}(\mathcal{O}_B\oplus K_B^m)\). 
Up to rescaling and symmetry, this amounts to checking \((\v,\w)\)-uniform K-stability of the reflexive lattice polytope \([-1,1]\subset \mathbb{R}\) with respect to the weights 
\[ \v(x)=(px+c)^3 \qquad \text{and} \qquad \w(x)=\left(l_{\ext}(x)-\frac{-6}{px+c}\right)(px+c)^3 \]
where \(p\in \mathbb{Q}\), \(c\in \mathbb{R}\) and \(c>p>0\).
Our sufficient condition allows to obtain the following explicit families of extremal Kähler classes. 
We only show an example with very rough estimates to illustrate the results, but of course one could get much more classes by using more precise estimates in the proof, and even more classes by using the sufficient condition in Theorem~\ref{prop-combinatorial} in its full generality. 

\begin{prop}{\label{prop-rank-one}}
\label{prop-negative-base}
With the above notations, if \(c\geq 15p\), then \([-1,1]\) is \((\v,\w)\)-uniformly K-stable. The corresponding Kähler classes on the \(\mathbb{P}^1\)-bundles \(\mathbb{P}(\mathcal{O}\oplus K_B^m)\) admit extremal Kähler metrics. 
\end{prop}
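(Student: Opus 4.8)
The plan is to recognize this as the Fano-fiber situation of Section~\ref{section-applications} and to reduce the verification to a two-point check via Corollary~\ref{cor-vertex}. Here the fiber is \(\mathbb{P}^1\) with its anticanonical polarization, so the relevant labelled polytope is the reflexive interval \(([-1,1],\textbf{L})\) with \(L_1(x)=1+x\), \(L_2(x)=1-x\); it is monotone with \(x_0=0\) and \(t=1\). There is a single base factor, with \(\ell=1\), \(n=3\), \(s=-6\) and weight data \(p_a(x)=px\), \(c_a=c\), so that \(\v(x)=(px+c)^3\) and \(\w(x)=\big(l_{\ext}(x)+\tfrac{6}{px+c}\big)(px+c)^3\). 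The hypothesis \(c_a\geq \tfrac{ts}{2n}=-1\) of Corollary~\ref{cor-vertex} holds trivially since \(c>p>0\), so it suffices to verify condition~\eqref{condition-general} at the two vertices \(x=\pm1\). Writing it in the form used in the proof of Corollary~\ref{corollary-fano-fiber}, the two inequalities to establish are
\[ \Phi_+ := 4+\frac{6(p-1)}{c+p}-l_{\ext}(1)\geq 0 \qquad\text{and}\qquad \Phi_- := 4-\frac{6(p+1)}{c-p}-l_{\ext}(-1)\geq 0. \]

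Next I would determine \(l_{\ext}(x)=Ax+B\), which is characterized by \eqref{annulation-Futaki}, i.e.\ \(\DF(1)=\DF(x)=0\). By \eqref{define-futaki}, and using that the labelled boundary measure of \(([-1,1],\textbf{L})\) puts unit mass at each endpoint (the normals \(\pm1\) being primitive), this becomes the linear system requiring \(2\big[f(-1)(c-p)^3+f(1)(c+p)^3\big]=\int_{-1}^1 f(x)\big(l_{\ext}(x)(px+c)^3+6(px+c)^2\big)\,dx\) for \(f\in\{1,x\}\). All the integrals are elementary moments \(\int_{-1}^1 x^j(px+c)^k\,dx\), and solving the resulting \(2\times2\) system (by hand, or with the appendix program for \(\mathbb{P}^1\)-fibrations) yields \(A\) and \(B\) as explicit rational functions of \((c,p)\) with common denominator \((5c^2-p^2)(5c^4+3p^4)\), which is strictly positive for \(c>p>0\). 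The one structural point worth isolating before estimating is that the singular contribution of the base scalar curvature \(s=-6\) cancels: the part of \(l_{\ext}\) that blows up as the Kähler class degenerates precisely offsets the \(\tfrac{6(p\mp1)}{c\pm p}\) terms in \(\Phi_\pm\), leaving a finite remainder equal to \(2\mp\tfrac{6p}{c}\) to leading order in \(p/c\). In particular \(\Phi_+\) is the binding constraint and \(\Phi_-\) is comfortably positive.

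Finally I would substitute \(A,B\) into \(\Phi_\pm\), clear the positive factors \((c\pm p)\), \((5c^2-p^2)\) and \((5c^4+3p^4)\), and reduce each vertex inequality to the nonnegativity of an explicit polynomial \(P_\pm(c,p)\) on the region \(c\geq 15p>0\). Exactly as in the proof of Proposition~\ref{prop-ex-fano}, I would then minorize \(P_\pm\) by a simpler polynomial that is manifestly nonnegative when \(c\geq 15p\); since the leading behaviour is the strictly positive constant \(2\) and every correction is \(O(p/c)=O(1/15)\), crude majorizations suffice and there is ample room. Once the two vertex inequalities hold, condition~\eqref{condition-monotone} holds on all of \([-1,1]\) by the concavity argument of Corollary~\ref{cor-vertex}, so \(([-1,1],\textbf{L})\) is \((\v,\w)\)-uniformly K-stable by Corollary~\ref{corollary-monotone}, and Theorem~\ref{theorem-J} then produces the extremal Kähler metrics on \(\mathbb{P}(\mathcal{O}\oplus K_B^m)\).

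The only real obstacle I anticipate is the bookkeeping in the last two steps: computing \(l_{\ext}\) cleanly and, above all, tracking the finite remainder after the singular \(s=-6\) terms cancel, so that the rough polynomial bounds can be seen to close for \(c\geq 15p\). Everything else is a direct application of the corollaries already established, and the non-optimality of the constant \(15\) reflects precisely the looseness of these elementary estimates.
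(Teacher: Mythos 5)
Your setup coincides with the paper's proof: the hypothesis of Corollary~\ref{cor-vertex} is indeed vacuous here (\(ts/2n=-1\)), the problem reduces to the two vertex inequalities \(\Phi_\pm\geq 0\), and \(l_{\ext}\) is computed from the \(2\times 2\) system \(\DF(1)=\DF(x)=0\) — this is exactly what Program~\ref{rank-one-program} does, and your Gram determinant \((5c^2-p^2)(5c^4+3p^4)\) is correct. Your structural observation that the \(-6/(px+c)\) singularity cancels against the corresponding part of \(l_{\ext}\), leaving \(\Phi_\pm=2\mp 6p/c+\dots\) with \(\Phi_+\) binding, is also correct and matches what one finds by expanding the paper's explicit polynomials.

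The gap is in the final step, which is where all the actual work of the paper's proof lies. Your claim that ``every correction is \(O(p/c)=O(1/15)\)'' and that the cleared polynomials \(P_\pm(c,p)\) are nonnegative on the whole region \(\{c\geq 15p>0\}\) is false: the corrections coming from the base curvature \(s=-6\) are of size \(1/c\), not \(p/c\), and these coincide only when \(p\geq 1\). Concretely, along \(c=15p\) with \(p\to 0\), the weight \(\w\) contains the term \(6(px+c)^2=6p^2(x+15)^2\) against \(\v=p^3(x+15)^3\), which forces \(l_{\ext}\) to blow up like \(p^{-1}\) times the affine projection of \(-6/(x+15)\); since \(-6/(x+15)\) is concave, that projection exceeds \(-6/(x\pm\ldots)\) at both endpoints and one gets \(\Phi_\pm\to-\infty\). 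Equivalently, in the paper's proof, substituting \(c=\alpha p\) turns the condition into a \emph{linear} inequality \(pA(\alpha)+B(\alpha)\geq 0\) with \(B(\alpha)=-300\alpha^6+160\alpha^4-80\alpha^3-180\alpha^2+48<0\) for \(\alpha\geq 15\), so the sufficient condition genuinely fails for \(p\) small. Hence no minorization of \(P_\pm\) by a polynomial nonnegative on \(\{c\geq 15p>0\}\) can exist, and your plan as stated cannot be completed. The missing ingredient is the lower bound \(p\geq 1\) (automatic geometrically, since \(p\) is the degree \(m\geq 1\) of \(K_B^m\) after normalization): the paper shows \(A(\alpha)\geq(75\alpha-307)\alpha^6\geq 0\) for \(\alpha\geq 307/75\), so by linearity in \(p\) it suffices to check \(p=1\), where rough bounds give positivity for \(\alpha\geq 15\), yielding the statement ``for all \(p\geq 1\)''. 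With that restriction made explicit and the estimate organized in \(p\) as the paper does, your argument closes and is essentially identical to the published one.
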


\begin{proof}
Using Program~\ref{rank-one-program} in the appendix or straightforward but tedious computations, we obtain up to elementary simplifications that the sufficient condition reads as 
\[ 75c^7-300c^6-65c^5p^2+160c^4p^2-15c^3p^4-180c^2p^4-27cp^6+48p^6 \]
is greater than 
\[\lvert -75c^6p+5c^4p^3+80c^3p^3-105c^2p^5+15p^7 \rvert \]Without attempting to give an optimal result, we may as well check that it is greater than 
\[ 75c^6p+5c^4p^3+80c^3p^3+105c^2p^5+15p^7 \] 
since \(c\) and \(p\) are positive. 
Writing \(c=\alpha p\) for some \(\alpha >1\) and simplifying by \(p^6\), we get a linear inequation in \(p\)
\begin{equation}
\label{inequation-linear-rank-one}
pA+B \geq 0
\end{equation}
where 
\begin{align*}
    A & = 75\alpha^7-75\alpha^6-65\alpha^5-5\alpha^4-15\alpha^3-105\alpha^2-27\alpha-15 \\
    B & = -300\alpha^6+160\alpha^4-80\alpha^3-180\alpha^2+48 
\end{align*}
Since \(\alpha >1\), the coefficient \(A\) is larger than \( (75\alpha - 307)\alpha^6 \) and in particular, it is non-negative for \(\alpha \geq \frac{307}{75}\). 
Using the same lower bound for the leading coefficient, inequation~\eqref{inequation-linear-rank-one} is certainly satisfied at \(p=1\) if 
\[ (75\alpha -307)\alpha^6 -300\alpha^6-160\alpha^4-80\alpha^3-180\alpha^2-48 \geq 0 \]
Using again \(\alpha> 1\) and very rough estimates, this is implied by the inequality 
\[ (75\alpha-1075)\alpha^6 \geq 0 \]
The latter is satisfied at least for \(\alpha \geq 15\), and since \(15\geq \frac{307}{75}\), we obtain that if \(\alpha \geq 15\), the sufficient condition is satisfied for all \(p\geq 1\). 
\end{proof}

\appendix

\section{An elementary Python program}{\label{appendix}}
We provide, as a courtesy to the reader, elementary Python programs using SymPy that checks the sufficient condition from Corollary~\ref{cor-vertex} for \emph{simple} principal toric fibrations (that is, the basis has only one factor) with Fano toric fiber \(X\) of dimension one or two such that \([\omega_X]\) a multiple of \( 2 \pi c_1(X)\). 

The only data from the simple principal toric bundle needed to compute the condition is: 
\begin{itemize}
    \item from the basis, the dimension \(n\in \mathbb{Z}\) and scalar curvature \(s\in \mathbb{Q}\)
    \item from the Fano toric fiber of dimension \(\ell\in \{1,2\}\), the reflexive moment polytope \(P\subset \mathbb{R}^{\ell}=\mathbb{Z}^{\ell}\otimes \mathbb{R}\), and the multiple \(t=\frac{  [\omega_X]}{2 \pi c_1(X)}\in \mathbb{R}\)
    \item the one-parameter subgroup \(p\) from the principal bundle, identified with an integer \(p\in \mathbb{Z}\) if the fiber is one-dimensional, and with an element \(p=(p_1,p_2)\in \mathbb{Z}^2\) if the fiber is of dimension two,
    \item and the constant \(c\in \mathbb{R}\) defining the admissible Kähler class. 
\end{itemize}
We wish to compute the expression given by the right-hand side of~\eqref{condition-general}
\[ \test = 2(\ell+n+1)+\frac{ts-2nc}{p(x)+c}-tl_{\ext}(x) \]
in order to check the condition. 
For this, it suffices to compute the extremal function \(l_{\ext}\) by solving the linear system which defines it. 
Our short programs compute \(l_{\ext}\), then \(\test\), then evaluate \(\test\) at the vertices of \(P\) and returns the minimum if all data are explicitly given. 
It the minimum returned by the program is non-negative, the data correspond to a simple principal toric fibration with an admissible Kähler class and \(c>\frac{ts}{2n}\), then there exists an extremal Kähler metric. 
We may also let some of the data remain unknown and treat them as variables.

\begin{lstlisting}[language=Python, caption=Rank one simple principal toric fibrations, label=rank-one-program]
import sympy as sym
# variable on the line (here the fiber is one-dimensional)
x = sym.symbols('x')
# data of the simple principal toric fibration
p, c = sym.symbols('p,c')
n, s, t = 3, -6, 1
# weights
l = c+p*x
v, w0 = l**n, -s*l**(n-1) # for now, unknown l_ext is replaced with zero
# Donaldson-Futaki invariant with weights (v,w0)
def DF0(f):
    interior=sym.integrate(f*w0, (x, -t, t))
    facets=(f*v).subs(x,-t)+(f*v).subs(x,t)
    return(interior+facets)
# Compute the extremal function lext
X=sym.Matrix(2, 1, [1, x])
M=sym.Matrix(2, 2, lambda i,j: 
    sym.integrate(X[i,0]*X[j,0]*v, (x, -t, t)))
V=sym.Matrix(2, 1, [DF0(1), DF0(x)])
Lext=M.LUsolve(V)
lext=((Lext.T)*X)[0,0]
# Compute expression test at the two vertices and print it
test=2*(1+1+n)+(t*s-2*n*c)/l-t*lext
print(sym.factor(test.subs(x,-t)))
print(sym.factor(test.subs(x,t)))
\end{lstlisting}

Program~\ref{rank-one-program} prints the condition to check when \(c\) and \(p\) are variables, \(n=3\), \(s=-6\) and \(t=1\), as used in Proposition~\ref{prop-negative-base}. 
By modifying Line 5 and 6, one can obtain the conditions for an arbitrary simple principal \(\mathbb{P}^1\)-bundle. 

\begin{lstlisting}[language=Python, caption= Simple principal \(\mathbb{P}^2\) toric fibrations, label=P2-program]
import sympy as sym
# variables on the plane
x1, x2 = sym.symbols('x1,x2')
# data of toric fibration and admissible Kahler class
c, p1, p2, n, s, t = 12, 1, 2, 3, 18, 1 
## weights associated to the data
l=c+p1*x1+p2*x2
v=l**n
w0=-s*l**(n-1) # for now, unknown l_ext replaced with zero
# list of vertices of the polytope
vert= [[2*t,-t], [-t,-t], [-t,2*t]]
# Donaldson-Futaki invariant with weights (v,w0)
def DF0(f):
  interior=sym.integrate(sym.integrate(f*w0,(x2,-t,t-x1)),(x1,-t,2*t))
  facet1=sym.integrate((2*f*v).subs(x2,-t),(x1,-t,2*t))
  facet2=sym.integrate((2*f*v).subs(x2,t-x1),(x1,-t,2*t))
  facet3=sym.integrate((2*f*v).subs(x1,-t),(x2,-t,2*t))
  return(interior+facet1+facet2+facet3)
# Compute the extremal function l_ext
X=sym.Matrix(3, 1, [1, x1, x2])
M=sym.Matrix(3, 3, lambda i,j:
    sym.integrate(sym.integrate(X[i,0]*X[j,0]*v,(x2,-t,t-x1)),(x1,-t,2*t)))
V=sym.Matrix(3, 1, [DF0(1), DF0(x1), DF0(x2)])
Lext=M.LUsolve(V)
lext=((Lext.T)*X)[0,0]
# Compute and print the minimum of expression test on vertices
test=2*(1+2+n)+(t*s-2*n*c)/l-t*lext
test_vertices=test.subs(x1,vert[0][0]).subs(x2,vert[0][1])
for i in range(1,len(vert)):
  test_vertices=sym.Min(test_vertices,
                        test.subs(x1,vert[i][0]).subs(x2,vert[i][1]))
print("The minimum of expression test on vertices is ", test_vertices)
\end{lstlisting}

Program~\ref{P2-program} computes the condition when all the data are given the fixed values \((c,p1,p2,n,s,t)=(12,1,2,3,18,1)\). 
Changing the values on the right-hand side of Line 5 allows to check the sufficient condition for arbitrary fixed values. 
If one wants one or several of the above quantities to be treated as variables, for example \(c\), \(p_1\) and \(p_2\), it suffices to remove these and the corresponding values on the right in Line 5 and add the line 
\begin{lstlisting}[language=Python, firstnumber=6]
c, p1, p2 = sym.symbols('c,p1,p2')
\end{lstlisting}
Since the program will now compute values of \(\test\) as symbolic expressions, it will no longer be able to determine the minimum. 
One should thus replace Lines 28--32 for example by 
\begin{lstlisting}[language=Python, firstnumber=28]
print(sym.separatevars(test.subs(x1,vert[2][0]).subs(x2,vert[2][1])))
\end{lstlisting}
to get the expressions from appendix~\ref{appendixB}, to be used in the proof of Proposition~\ref{prop-ex-fano}. 

Similarly, it is very easy to modify the program to consider another Fano toric surface as fiber (Recall that there are five smooth Fano toric surfaces: \(\mathbb{P}^1\times\mathbb{P}^1\) and the blowups of \(\mathbb{P}^2\) at up to three fixed points under the torus action). 
It suffices to modify Lines 10--18 according to the desired polytope. 
For example, if one wants to work with fiber the first Hirzebruch surface (i.e. the blowup of \(\mathbb{P}^2\) at one point), then it suffices to replace Lines 10--18 with 
\begin{lstlisting}[language=Python, firstnumber=10]
# list of vertices of the polytope
vert= [[-t,-t], [t,-t], [t,0], [-t,2t]]
# Donaldson-Futaki invariant with weights (v,w0)
def DF0(f):
  interior=sym.integrate(sym.integrate(f, (x2, -t, t-x1)), (x1, -t, t))
  facet1=sym.integrate(f.subs(x2,-t), (x1, -t, t))
  facet2=sym.integrate(f.subs(x2,t-x1), (x1, -t, t))
  facet3=sym.integrate(f.subs(x1,-t), (x2, -t, 2t))
  facet4=sym.integrate(f.subs(x1,t), (x2, -t, 0))
  return(interior+facet1+facet2+facet3+facet4)
\end{lstlisting}


\section{Complement of proof of Proposition \ref{prop-ex-fano}}{\label{appendixB}}

\begin{equation*}
    \begin{split}
    P(c,p_1,p_2):=&12250c^{10} + 24500c^9p_1 - 39690c^8p_1^2 + 18060c^7p_1^3 - 22470c^6p_1^4 - 31752c^5p_1^5 \\
    &- 53376c^4p_1^6+ 22740c^3p_1^7 - 57024c^2p_1^8 + 1312p_1^10 - 49000c^9p_2  \\
    &+ 34650c^7p_1^2p_2 + 286860c^6p_1^3p_2 + 152460c^5p_1^4p_2 + 360972c^4p_1^5p_2 \\
    &- 59520c^3p_1^6p_2 + 230112c^2p_1^7p_2 + 18288cp_1^8p_2 - 464p_1^9p_2 - 127890c^8p_2^2 \\
    &- 212310c^7p_1p_2^2 - 615510c^6p_1^2p_2^2 - 373212c^5p_1^3p_2^2 - 921924c^4p_1^4p_2^2 \\
    &- 425376c^2p_1^6p_2^2 - 160632cp_1^7p_2^2 - 19296p_1^8p_2^2 + 141540c^7p_2^3 + 657300c^6p_1p_2^3 \\
    &+ 603288c^5p_1^2p_2^3 + 1408632c^4p_1^3p_2^3 + 390936c^3p_1^4p_2^3 + 571536c^2p_1^5p_2^3 \\
    &+ 349440cp_1^6p_2^3 + 41376p_1^7p_2^3 - 328650c^6p_2^4 - 531720c^5p_1p_2^4 - 1421136c^4p_1^2p_2^4 \\
    &- 806100c^3p_1^3p_2^4 - 829080c^2p_1^4p_2^4 - 497592cp_1^5p_2^4 - 22416p_1^6p_2^4 + 212688c^5p_2^5 \\
    &- 43812c^3p_1^5p_2^2 + 860184c^4p_1p_2^5 + 849456c^3p_1^2p_2^5 + 906192c^2p_1^3p_2^5 \\
    & + 485712cp_1^4p_2^5 - 7488p_1^5p_2^5 - 286728c^4p_2^6 - 527016c^3p_1p_2^6 - 725760c^2p_1^2p_2^6 \\
    &- 329952cp_1^3p_2^6 + 127890c^8p_1p_2  + 7352cp_1^9 + 22656p_1^4p_2^6 + 150576c^3p_2^7  \\
    &+ 363168c^2p_1p_2^7 + 156096cp_1^2p_2^7 - 25728p_1^3p_2^7 - 90792c^2p_2^8 - 46368cp_1p_2^8  \\
    &+ 16992p_1^2p_2^8 + 10304cp_2^9 - 7040p_1p_2^9 + 1408p_2^10 + 132300c^7p_1^2 + 105840c^6p_1^3 \\
    &- 11340c^5p_1^4 + 125496c^4p_1^5 + 151200c^3p_1^6 - 79056c^2p_1^7 + 60048cp_1^8  \\
    &- 12096p_1^9 - 396900c^7p_1p_2 - 449820c^6p_1^2p_2 - 260820c^5p_1^3p_2 - 374220c^4p_1^4p_2 \\
    &- 420336c^3p_1^5p_2 + 358992c^2p_1^6p_2 - 364176cp_1^7p_2 + 17712p_1^8p_2 + 396900c^7p_2^2  \\
    &+ 714420c^6p_1p_2^2 + 601020c^5p_1^2p_2^2 + 378756c^4p_1^3p_2^2 + 743904c^3p_1^4p_2^2  \\
    &- 557280c^2p_1^5p_2^2 + 734832cp_1^6p_2^2 + 84240p_1^7p_2^2 - 476280c^6p_2^3 - 680400c^5p_1p_2^3 \\
    &- 282744c^4p_1^2p_2^3 - 728784c^3p_1^3p_2^3 + 99792c^2p_1^4p_2^3 - 1073520cp_1^5p_2^3  \\
    &- 287280p_1^6p_2^3 + 340200c^5p_2^4 + 45360c^4p_1p_2^4 + 568512c^3p_1^2p_2^4 + 829440c^2p_1^3p_2^4  \\
    &+ 1551312cp_1^4p_2^4 - 244944c^3p_1p_2^5- 241056c^2p_2^7 - 736128cp_1p_2^7 -18144c^4p_2^5 \\
    &+ 415152p_1^5p_2^4 - 279072p_1^2p_2^7 + 184032cp_2^8 + 139968p_1p_2^8 - 31104p_2^9  \\
    &   - 1175472c^2p_1^2p_2^5 - 1732752cp_1^3p_2^5 - 358992p_1^4p_2^5 + 81648c^3p_2^6\\
    &+ 843696c^2p_1p_2^6 + 1436400cp_1^2p_2^6 + 323568p_1^3p_2^6 
    \end{split}
\end{equation*}

\begin{equation*}
    \begin{split}
Q(c,p_1,p_2):=&6125c^9 + 2205c^7p_1^2 + 210c^6p_1^3 + 14175c^5p_1^4 - 7812c^4p_1^5 + 24c^3p_1^6 \\
&+ 9072c^2p_1^7 - 5004cp_1^8 + 688p_1^9 - 2205c^7p_1p_2 - 315c^6p_1^2p_2 - 28350c^5p_1^3p_2 \\
&- 31752c^2p_1^6p_2 + 20016cp_1^7p_2 - 3096p_1^8p_2 + 2205c^7p_2^2 - 315c^6p_1p_2^2 + 42525c^5p_1^2p_2^2 \\
&- 7812c^4p_1^3p_2^2 + 4356c^3p_1^4p_2^2 + 40824c^2p_1^5p_2^2 - 40320cp_1^6p_2^2 + 4464p_1^7p_2^2 \\
&+ 210c^6p_2^3 - 28350c^5p_1p_2^3 - 7812c^4p_1^2p_2^3 - 8592c^3p_1^3p_2^3 - 22680c^2p_1^4p_2^3 \\
&+ 50904cp_1^5p_2^3 - 1176p_1^6p_2^3 + 19530c^4p_1^4p_2 - 72c^3p_1^5p_2  \\
&+ 14175c^5p_2^4 + 19530c^4p_1p_2^4 + 4356c^3p_1^2p_2^4 - 22680c^2p_1^3p_2^4 - 56196cp_1^4p_2^4 \\
&- 1224p_1^5p_2^4 - 7812c^4p_2^5 - 72c^3p_1p_2^5 + 40824c^2p_1^2p_2^5 + 50904cp_1^3p_2^5 - 1224p_1^4p_2^5 \\
&+ 24c^3p_2^6 - 31752c^2p_1p_2^6 - 40320cp_1^2p_2^6 - 1176p_1^3p_2^6 + 9072c^2p_2^7 + 20016cp_1p_2^7 \\
&+ 4464p_1^2p_2^7 - 5004cp_2^8 - 3096p_1p_2^8 + 688p_2^9    \end{split}
\end{equation*}

\end{document}